\newtheorem{theorem}{Theorem}[section]
\newtheorem{proposition}[theorem]{Proposition}
\newtheorem{corollary}[theorem]{Corollary}
\newtheorem{lemma}[theorem]{Lemma}
\theoremstyle{definition}
\newtheorem{definition}[theorem]{Definition}
\newtheorem{example}[theorem]{Example}
\theoremstyle{remark}
\newtheorem{remark}[theorem]{Remark}
\newcommand{\mb}[1]{\mathbb{#1}}
\newcommand\Hom{{\rm Hom}}
\newcommand{\im}{\operatorname{im}}
\newcommand{\rank}{\operatorname{rank}}
\newcommand{\ev}{\operatorname{ev}}
\newcommand{\xra}[1]{\xrightarrow{#1}}
\newcommand{\Spec}{\operatorname{Spec}}
\newcommand{\Pic}{\operatorname{Pic}}
\newcommand{\Sym}{\operatorname{Sym}}
\newcommand{\Cone}{\operatorname{Cone}}
\newcommand{\Cl}{\operatorname{Cl}}
\newcommand{\length}{\operatorname{length}}
\newcommand{\sat}[1]{#1^{\text{sat}}}
\newcommand{\rr}{\operatorname{r}}
\newcommand{\crr}{\operatorname{cr}}
\newcommand{\srr}{\operatorname{sr}}
\newcommand{\brr}{\operatorname{\underline{r}}}
\newcommand{\apolar}{{\: \lrcorner \:}}
\newcommand{\ccdots}{\cdot\ldots\cdot}
\begin{document}

\author{Maciej Ga\l{}\k{a}zka}
\address{Maciej Ga\l{}\k{a}zka, Faculty of Mathematics, Computer Science, and Mechanics, University of Warsaw, ul. Banacha 2, 02-097 Warszawa, Poland}
\email{mgalazka@mimuw.edu.pl}
\title{Multigraded Apolarity}
\date{\today}
\keywords{secant variety, Waring rank, cactus rank, border rank, toric variety, apolarity, catalecticant}
\subjclass[2010]{14M25, 14N15}
% 2010 classification
%14-xx Algebraic geometry\\
%14Mxx Special varieties\\
%14M25 Toric varieties}
%14-xx Algebraic geometry\\
%14Nxx Projective and enumerative geometry\\
%14N15 Classical problems and Schubert calculus

\begin{abstract}
  We generalize methods to compute various kinds of rank to the case of a toric variety $X$ embedded into projective space using a very ample line
  bundle $\mathcal{L}$. We find an upper bound on the cactus rank. We use this to compute rank, border rank, and cactus rank of monomials in $H^0(X,
  \mathcal{L})^*$ when $X$ is $\mathbb{P}^1 \times \mathbb{P}^1$, the Hirzebruch surface $\mathbb{F}_1$, the weighted projective plane
  $\mathbb{P}(1,1,4)$, or a fake weighted projective plane.
\end{abstract}

\maketitle

\tableofcontents
\pagebreak
\section{Introduction}

\subsection{Background}

The topic of calculating ranks of polynomials goes back to works of Sylvester on apolarity in the 19th century. For introductions to this subject, see
\cite{iarrobino_kanev_book_Gorenstein_algebras} and \cite{landsberg_tensorbook}. For a concise introduction to the concept of rank for different
subvarieties $X \subseteq \mathbb{P}^N$ and numerous ways to give lower bounds for rank, see \cite{teitler_geometric_lower_bounds} (see also many
references there). For a brief review of the apolarity action in the case of the Veronese map, see \cite[Section 3]{nisiabu_jabu_cactus}.

As far we know, the notion of cactus rank was first defined in \cite[Chapter 5]{iarrobino_kanev_book_Gorenstein_algebras} (where it is called the
``scheme length''). For a motivation, basic properties and an application in the case of the Veronese embedding, see \cite{nisiabu_jabu_cactus}. We
study cactus rank, because properties of the Hilbert scheme of all zero-dimensional subschemes of a variety are better understood than properties of
the subset corresponding to smooth schemes (i.e.\ schemes of points). Another reason is that many bounds for rank work also for cactus rank, for
instance the Landsberg-Ottaviani bound for vector bundles (see \cite{mgalazka_cactus_equations}). There is also a lower bound for the cactus rank by
Ranestad and Schreyer (see \cite{ranestad_schreyer_on_the_rank_of_a_symmetric_form}).

In this paper, we see what happens when $X$ is a toric variety. For an introduction to this subject, see the newer \cite{cox_book} and the older
\cite{fulton}. For toric varieties, many invariants can be computed quite easily. This can be used to study ranks and secant varieties. In
\cite{cox_sidman}, the authors investigate the second secant variety $\sigma_2(X)$, where $X$ is a toric variety embedded into some projective space.
As they write there, ``Many classical varieties whose secant varieties have been studied are toric''. Here we take a different approach. We generalize
apolarity to toric varieties, and then, as an application, we compute rank, cactus rank and border rank of some polynomials.

\subsection{Main results}
We need to introduce some notions to state the main results. Suppose $X$ is a $\mathbb{Q}$-factorial projective toric variety. Let $S$ be the Cox ring
of $X$. By definition, it is graded by $\Cl X$. Since $X$ is a toric variety, $S$ is a polynomial ring with finitely many variables (see \cite[Section
5.2]{cox_book}), so we may write $S \cong \mathbb{C}[x_1,\dots,x_r]$. Introduce $T = \mathbb{C}[y_1,\ldots,y_r]$. We will think of $T$ as an
$S$-module, where the multiplication (denoted by $\apolar$) is induced by
\begin{equation}\label{equation:apolarity}
  x_i \apolar y_1^{b_1}\cdot \ldots \cdot y_r^{b_r} = \begin{cases}
  y_1^{b_1}\cdot\ldots\cdot y_i^{b_i-1}\cdot\ldots\cdot y_r^{b_r} & \text{if} \, b_i > 0,\\
  0 & \text{otherwise.}\end{cases}
\end{equation}
We define a grading on $T$ in $\Cl X$ in an analogous way as on $S$:
\begin{equation*}
   \deg y_1^{a_1}\ccdots y_r^{a_r} = \deg x_1^{a_1}\ccdots x_r^{a_r}\text{.}
\end{equation*}
Let $\alpha \in \Pic X$ be a very ample class. The pairing $\apolar$ gives a duality which identifies $H^0(X, \mathcal{O}(\alpha))^*$ with $T_\alpha$
(this is described in detail in Proposition \ref{proposition:duality}). Here and later, for any graded ring $R$ and any degree $\mu$, we denote by
$R_\mu$ the graded piece of $R$ of degree $\mu$. For $F \in T_\alpha$ we define $F^\perp$ as its annihilator in $S$ (with respect to the action
$\apolar$).

The first main result of this paper is:
\begin{theorem}[Multigraded Apolarity Lemma]\label{theorem:multigraded_apolarity}
  Let
  \begin{equation*}
    \varphi \colon X \hookrightarrow \mathbb{P}(H^0(X, \mathcal{O}(\alpha))^*)
  \end{equation*}
  be the morphism associated with the complete linear system $|\mathcal{O}(\alpha)|$. Fix a non-zero $F \in H^0(X, \mathcal{O}(\alpha))^*$.
  Then for any closed subscheme $R \hookrightarrow X$ we have
  \begin{equation*}
    F \in \langle R \rangle \iff I(R) \subseteq F^\perp \text{.}
  \end{equation*}
  Here $I(R)$ is the ideal of $R$ from Definition \ref{definition:ideal_of_subscheme}, and $\langle R \rangle$ is the linear span of a subscheme (see
  the beginning of Subsection \ref{subsection:cactus_rank}).
\end{theorem}
This was first proven in my master thesis (see \cite{mgalazka_master_thesis}). Then it was independently proven for smooth $X_\Sigma$ in \cite[Lemma
1.3]{toric_ranestad}.  In the paper the authors use this to determine varieties of apolar subschemes for $\mathbb{P}^1 \times \mathbb{P}^1$ embedded
into projective space by $\mathcal{O}(2,2)$ and $\mathcal{O}(3,3)$, and also for the Hirzebruch surface $\mathbb{F}_1$ embedded by the bundle
$\mathcal{O}(2,1)$ (in notation from Subsection \ref{subsection:hirzebruch_surface}).

We prove Theorem \ref{theorem:multigraded_apolarity} in Section \ref{section:apolarity_lemma}. 

Suppose $\beta \in \Cl X$. Consider the restriction of the action $\apolar$ to
\begin{equation*}
  S_{\beta} \times T_{\alpha} \xra{\apolar} T_{\alpha - \beta} \text{.}
\end{equation*}
For any $F \in T_\alpha$ we consider the linear map $C_F^\beta : S_\beta \to T_{\alpha - \beta}$ given by $h \mapsto h\apolar F$.
\begin{theorem}\label{theorem:catalecticant}
  Fix $F \in H^0(X, \mathcal{O}(\alpha))^*$. We have the following:
  \begin{enumerate}[(1)]
    \item if $\beta \in \Cl X$, then
    \begin{equation*}
      \brr(F) \geq \rank(C_F^\beta)\text{,}
    \end{equation*}
    \item if $\beta \in \Pic X$, then
    \begin{equation*}
      \crr(F) \geq \rank(C_F^\beta)\text{.}
    \end{equation*}
  \end{enumerate}
  Here $\brr(F)$ and $\crr(F)$ denote the border rank and the cactus rank of $F$, respectively, see Definitions
  \ref{definition:sigma_rank_border_rank} and \ref{definition:cactus_rank}.
\end{theorem}
We also provide an example such that the bound in point (1) does not hold for the cactus rank, see Remark \ref{remark:catalecticant_counterexample}.

Theorem \ref{theorem:catalecticant} is proven in Corollary \ref{corollary:cactus_catalecticant_bound} and Corollary
\ref{corollary:border_catalecticant_bound}. The bound in point (2) was given in \cite[Theorem 5.3.D]{iarrobino_kanev_book_Gorenstein_algebras} for the
Veronese embedding. Also see \cite{mgalazka_cactus_equations} for a version of the bound in point (2) for vector bundles of higher rank. 

As an application of Theorem \ref{theorem:multigraded_apolarity}, in Section \ref{section:upper_bound_on_cactus} we provide an upper bound for the 
cactus rank of a polynomial. 
\begin{theorem} \label{theorem:upper_bound_on_cactus}
  Suppose $X$ is a smooth projective toric variety, and $\alpha \in \Pic X$ is a very ample class. Let $0 \neq F \in H^0(X, \mathcal{O}(\alpha))^*$.
  Let $\sigma$ be any cone of the fan of $X$ of maximal dimension. Let $f$ be the dehomogenization of $F$ defined by setting all the variables
  corresponding to rays not in $\sigma$ to $1$. Then
  \begin{equation*}
    \crr(F) \leq \dim S/f^\perp\text{.}
  \end{equation*}
\end{theorem}
This theorem is a generalization of \cite[Theorem 3]{bernardi_ranestad_cactus_rank_of_cubics} to the multigraded setting.
From it we derive a corollary.
\begin{corollary}\label{corollary:upper_bound_segre_veronese}
  Let 
  \begin{equation*}
    \mathbb{P}^{n_1}\times \dots \times \mathbb{P}^{n_k} \xra{v_{d_1,\dots,d_k}} \mathbb{P}(\Sym^{d_1}\mathbb{C}^{n_1 + 1}\otimes \dots \otimes
    \Sym^{d_k}\mathbb{C}^{n_k + 1})
  \end{equation*}
  be a Segre-Veronese embedding. Here $\Sym^i$ denotes the $i$-th symmetric tensors. Let $F \in \Sym^{d_1}\mathbb{C}^{n_1 + 1}\otimes \dots \otimes
  \Sym^{d_k}\mathbb{C}^{n_k + 1}$ be a non-zero form. Then
  \begin{align*}
    \crr(F) &\leq \sum_{\substack{(e_1,\dots,e_k) |\\ e_1 + \dots + e_k \leq d/2 }}\binom{n_1 -1+ e_1}{e_1}\ccdots \binom{n_k -1+ e_k}{e_k} \\
    &+\sum_{\substack{(e_1,\dots,e_k) |\\ e_1 + \dots + e_k > d/2 }}\binom{n_1 -1+ d_1 - e_1}{d_1 - e_1}\ccdots \binom{n_k -1 + d_k - e_k}{d_k - e_k}\text{.}
  \end{align*}
\end{corollary}
In \cite{ballico_bernardi_gesmundo_cactus_rank_segre_veronese} the authors prove a weaker version of the bound in Corollary
\ref{corollary:upper_bound_segre_veronese}.

See Section \ref{section:upper_bound_on_cactus} for the proofs of Theorem \ref{theorem:upper_bound_on_cactus} and Corollary
\ref{corollary:upper_bound_segre_veronese}.

Finally, we use this to compute ranks of monomials when $X$ is a projective toric surface. The first example is $\mathbb{P}^1 \times \mathbb{P}^1$,
see Subsection \ref{subsection:p1timesp1}. We consider the problem of determining cactus ranks and ranks of monomials $F =
x_0^{k_0}x_1^{k_1}y_0^{l_0}y_1^{l_1}$, where $k_0 \geq k_1 \geq 1, l_0 \geq l_1 \geq 1$. We have
\begin{equation}\label{equation:obvious_rank_inequality}
   \rr(F) \leq  (k_0 + 1)(l_0 + 1)\text{.}
\end{equation}
But the equality in the equation above does not always hold. For example, rank of $x_0^2 x_1 y_0^2 y_1$ is $8$, not $9$ (see \cite[Remark
16]{christandl_kjaerulff_zuiddam_tensor_rank_not_multiplicative}, \cite{chen_friedland_rank_of_tensor_product_is_eight}).

Our result is 
\begin{theorem}\label{theorem:inequalities}
  The following inequalities hold:
  \begin{enumerate}[(i)]
    \item $\rr(F) \leq (k_0 + 1)(l_1 + 1) + (k_1 + 1)(l_0 + 1) - (k_1 + 1)(l_1 + 1)$,\label{item:first_inequality}
    \item $\rr(F) \geq (k_0 + 1)(l_1 + 1)$ for $k_0 > k_1$, $\rr(F) \geq (k_1 + 1)(l_0 + 1)$ for $l_0 > l_1$,\label{item:second_inequality}
    \item $\rr(F) \geq (k_1 + 2)(l_1 + 2) - 1$ for $k_0 > k_1$ and $l_0 > l_1$.\label{item:third_inequality}
  \end{enumerate}
\end{theorem}

Item \eqref{item:first_inequality} is stronger than the recent result \cite[Proposition
3.9]{ballico_bernardi_christandl_gesmundo_partially_symmetric}. Item \eqref{item:second_inequality} is proven independently in \cite[Proposition 4.3]{
ballico_bernardi_gesmundo_oneto_ventura_geometric_conditions}.

Let us look at the cases where rank is determined by these inequalities. When we set $l_1 = l_0$ in the first inequality of Item
\eqref{item:second_inequality}, from Equation \eqref{equation:obvious_rank_inequality} we get $\rr(F) = (k_0 + 1)(l_0 + 1)$. Also when we set $k_0 =
k_1 + 1$ and $l_0 = l_1 + 1$, we get (by Items \eqref{item:first_inequality} and \eqref{item:third_inequality}) that $\rr(F) = (k_1 + 2)(l_1 + 2) -
1$. 

The next example is the Hirzebruch surface $\mathbb{F}_1$ (which can be defined as $\mathbb{P}^2$ blown up in one point), see Subsection
\ref{subsection:hirzebruch_surface}. Here we find monomials whose border rank is less than their cactus rank (and also their smoothable rank), see
Remark \ref{remark:wild_case}. Another one is the weighted projective plane $\mathbb{P}(1,1,4)$, see Subsection
\ref{subsection:weighted_projective_plane}. Here we give an example of a monomial whose cactus rank is less than its border rank. The last one is a
fake weighted projective plane (see Subsection \ref{subsection:fake_weighted_projective_plane}) --- the quotient of $\mathbb{P}^2$ by the action of
$\mathbb{Z}/3 = \{1, \varepsilon, \varepsilon^2 \}$ (where $\varepsilon^3 = 1$) given by $\varepsilon \cdot [\lambda_0,\lambda_1,\lambda_2] =
[\lambda_0, \varepsilon\lambda_1,\varepsilon^2 \lambda_2]$. 

\subsection{Acknowledgments}
This article a severely expanded version of my master thesis, \cite{mgalazka_master_thesis}.

I thank my advisor, Jaros\l{}aw Buczy\'nski, for introducing me to this subject, his insight, many suggestions of examples, suggestions on how to
improve the presentation, many discussions, and constant support. I also thank Piotr Achinger and Joachim Jelisiejew for suggestions on how to improve
the presentation. I am also grateful to Joachim Jelisiejew and Mateusz Micha{\l}ek for helpful discussions.

I was supported by the project ``Secant varieties, computational complexity, and toric degenerations'' realized withing the Homing Plus programme of
Foundation for Polish Science, co-financed from European Union, Regional Development Fund, and by Warsaw Center of Mathematics and Computer Science
financed by Polish program KNOW. During the process of expanding the article (adding Section \ref{section:upper_bound_on_cactus} and Subsection
\ref{subsection:p1timesp1}) I was supported by the NCN project ``Algebraic Geometry: Varieties and Structures'' no. 2013/08/A/ST1/00804.

\section{Ranks and secant varieties}
In this section we review the definitions of various kinds of ranks and secant varieties.
\begin{definition}\label{definition:sigma_rank_border_rank}
  Let $W$ be a finite-dimensional complex vector space, and $X$ a subvariety of $\mathbb{P}W$. Let
  \begin{equation*}
    \sigma_r^0(X) = \{ [F] \in \mathbb{P}W | [F] \in \langle p_1,\dots, p_r \rangle \text{ where } p_1,\dots, p_r \in X \}\text{,}
  \end{equation*}
  where $\langle \rangle$ denotes the (projective) linear span. Define the $r$-th \emph{secant variety} of $X \subseteq \mathbb{P}W$ by $\sigma_r(X) =
  \overline{\sigma_r^0(X)}$. The overline denotes the Zariski closure. For any non-zero $F \in W$ define the
  \emph{$X$-rank} of $F$:
  \begin{align*}
    \rr_X(F) &= \min \{ r \in \mathbb{Z}_{\geq 1} | [F] \in \sigma_r^0(X) \} \\
             &= \min \{ r \in \mathbb{Z}_{\geq 1} | [F] \in \langle p_1,\dots,p_r \rangle \text{ for some } p_1,\dots,p_r \in X\}
  \end{align*}
  and the \emph{$X$-border rank} of $F$:
  \begin{align*}
    \brr_X(F) &= \min \{ r \in \mathbb{Z}_{\geq 1} | [F] \in \sigma_r(X) \}\\
    &= \min \{ r \in \mathbb{Z}_{\geq 1} | F \text{ is a limit of points of }X\text{-rank } \leq r \}\text{.} 
  \end{align*}
  Usually, if $X$ is fixed, we omit the prefix and call them rank and border rank, respectively. 
\end{definition}

The problem of calculating border rank of points is related to finding equations of secant varieties. Namely, if we know set-theoretic equations of
$\sigma_r(X) \subseteq \mathbb{P}W$ for $r = 1,2,3,\dots$, then we can calculate the border rank of any point (by checking if it satisfies the
equations).

\begin{example}
  Let $X$ be the $d$-th Veronese variety $\mathbb{P}V \subseteq \mathbb{P}\Sym^d V$. Then $X$-rank of $[F] \in \mathbb{P} \Sym^d V$ is the least $r$
  such that $F$ can be written as $v_1^d + \dots + v_r^d$ for some $v_i \in V$. The $X$-rank is called the symmetric rank, or the Waring rank in this
  case.
\end{example}

Let us go back to the setting of a projective variety $X \subseteq \mathbb{P}W$. Here are a few results which we are going to need later.

Let $\mathbb{P}T_{q}X$ denote the projective tangent space of $X$ embedded in $\mathbb{P}W$ at point $q$, i.e.\ the projectivization of the affine
tangent space to the affine cone of $X$.
\begin{proposition}[Terracini's Lemma]\label{proposition:terracini}
  Let $r$ be a positive integer. Then for $r$ general points $p_1,\dots, p_r \in X$ and a general point $q \in \langle p_1,\dots,p_r \rangle$ we have
  \begin{equation*}
    \mathbb{P}T_{q}\sigma_r(X) = \langle \mathbb{P}T_{p_1} X,\dots,\mathbb{P}T_{p_r} X \rangle\text{.}
  \end{equation*}
\end{proposition}
For a proof, see \cite[Section 5.3]{landsberg_tensorbook} or \cite[Chapter V, Proposition 1.4]{zak_tangents}.
\begin{corollary}[of Proposition \ref{proposition:terracini}]
  The dimension of $\sigma_r(X)$ is not greater than $r(\dim X + 1) - 1$.
\end{corollary}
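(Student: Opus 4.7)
The plan is to deduce the dimension bound directly from Terracini's Lemma together with the elementary fact that the linear span of $r$ projective subspaces of dimensions $d_1, \dots, d_r$ has dimension at most $\sum_{i=1}^{r}(d_i+1)-1$.

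First I would pick a general point $q \in \sigma_r(X)$. Since $\sigma_r(X)$ is irreducible (being the Zariski closure of the image of the rational map from $X^r \times \mathbb{P}^{r-1}$ sending $(p_1,\dots,p_r, [\lambda])$ to $\sum \lambda_i p_i$), the smooth locus of $\sigma_r(X)$ is open and dense, so we may assume $q$ is a smooth point and moreover that $q$ lies on a secant plane $\langle p_1,\dots,p_r \rangle$ with $p_1,\dots,p_r \in X$ general, as required by Proposition~\ref{proposition:terracini}. At such a smooth point, $\dim \sigma_r(X) = \dim \mathbb{P}T_q \sigma_r(X)$.

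Next I would apply Terracini's Lemma to identify $\mathbb{P}T_q \sigma_r(X)$ with $\langle \mathbb{P}T_{p_1}X, \dots, \mathbb{P}T_{p_r}X \rangle$. Each $\mathbb{P}T_{p_i}X$ is a projective linear subspace of $\mathbb{P}W$ of dimension $\dim X$ (again we use that the general $p_i \in X$ is a smooth point of $X$). To bound the dimension of the span, I would pass to the affine cones: the affine tangent space $T_{p_i}X \subseteq W$ has dimension $\dim X + 1$, and the affine span of $r$ linear subspaces of $W$ of dimensions $\dim X + 1$ has dimension at most $r(\dim X+1)$ by subadditivity of dimension under sum. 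Projectivizing gives
\begin{equation*}
  \dim \langle \mathbb{P}T_{p_1}X, \dots, \mathbb{P}T_{p_r}X \rangle \leq r(\dim X + 1) - 1\text{.}
\end{equation*}
Combining this with the previous step yields the claim $\dim \sigma_r(X) \leq r(\dim X + 1) - 1$.

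There is no real obstacle here; the only points that need minor care are the justification that a general $q$ is smooth on $\sigma_r(X)$ (so that the projective tangent space has the expected dimension) and the passage between projective and affine dimensions when bounding the span. Both are standard and do not require any toric or apolarity input.
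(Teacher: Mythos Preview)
Your argument is correct and is the standard derivation; in fact the paper does not supply any proof for this corollary, merely recording it as an immediate consequence of Terracini's Lemma. One small simplification is available: you do not need to argue that $q$ is a smooth point of $\sigma_r(X)$, since the inequality $\dim \sigma_r(X) \le \dim \mathbb{P}T_q\sigma_r(X)$ holds at \emph{every} point of $\sigma_r(X)$ (the Zariski tangent space has dimension at least that of the variety at any point), so taking $p_1,\dots,p_r$ and $q$ as in the statement of Terracini's Lemma and bounding the span of the embedded tangent spaces suffices.
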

\begin{proposition}
  If $X$ is irreducible, then $\sigma_r(X)$ is irreducible for any $r \geq 1$.
\end{proposition}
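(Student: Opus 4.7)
The plan is to exhibit $\sigma_r(X)$ as the Zariski closure of the image of an irreducible variety under a morphism, since such a closure is automatically irreducible.

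I would pass to affine cones. Let $\hat X \subseteq W$ denote the affine cone over $X$. Because $X$ is irreducible, the complement $\hat X \setminus \{0\}$ is a principal $\mathbb{G}_m$-bundle over $X$, hence irreducible, and therefore its Zariski closure $\hat X$ is irreducible. The $r$-fold product $\hat X^r$ is then also irreducible.

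Next, consider the addition morphism
\begin{equation*}
  \Sigma \colon \hat X^r \longrightarrow W, \qquad (v_1,\ldots,v_r) \longmapsto v_1 + \cdots + v_r.
\end{equation*}
The closure of $\Sigma(\hat X^r)$ in $W$ is irreducible. I would then verify that this closure is precisely the affine cone $\widehat{\sigma_r(X)}$ over $\sigma_r(X)$. On one hand, any point of $\sigma_r^0(X)$ can be represented as $\lambda_1 \tilde p_1 + \cdots + \lambda_r \tilde p_r$ for lifts $\tilde p_i \in \hat X$, and after absorbing the scalars $\lambda_i$ into $\tilde p_i$ this vector visibly lies in the image of $\Sigma$. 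Conversely, every value $\Sigma(v_1,\ldots,v_r)$ at a nonzero input projects to a point of $\sigma_{r'}^0(X) \subseteq \sigma_r^0(X)$ for some $r' \leq r$. Taking closures yields $\overline{\Sigma(\hat X^r)} = \widehat{\sigma_r(X)}$, and projectivizing preserves irreducibility.

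The main obstacle is only mild bookkeeping: one must handle the origin $0 \in W$, which is always in the image of $\Sigma$ but does not correspond to a point of $\mathbb{P}W$, and one must confirm that projectivization takes the irreducible affine cone to an irreducible projective variety. The latter follows from the fact that $W \setminus \{0\} \to \mathbb{P}W$ is surjective with connected fibers. Neither point is deep, but they are the only subtleties in an otherwise clean application of ``image of an irreducible variety has irreducible closure''.
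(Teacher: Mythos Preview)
Your argument is the standard one and is correct. The only comparison to make is this: the paper does not actually supply a proof of this proposition --- it is stated and then immediately followed by the definition of expected dimension, with no proof environment in between. So there is nothing in the paper to compare against; your write-up fills in what the author left as a well-known fact.

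For the record, your proof is exactly the customary approach: realize the affine cone $\widehat{\sigma_r(X)}$ as the closure of the image of the addition map $\hat X^r \to W$, and use that the image of an irreducible variety under a morphism has irreducible closure. The two bookkeeping points you flag (the origin, and that projectivization preserves irreducibility of a cone) are handled correctly. One tiny sharpening you could add: when you say $\hat X \setminus \{0\}$ is a $\mathbb{G}_m$-bundle over $X$ and hence irreducible, it is slightly more direct to note that $\hat X \setminus \{0\}$ is the image of the irreducible $\mathbb{G}_m \times X$ under a morphism --- this avoids any worry about what ``$\mathbb{G}_m$-bundle'' means in the Zariski versus \'etale sense. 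But this is cosmetic; the argument as written is already fine.
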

\begin{definition}\label{definition:expected_dimension}
  When $\dim \sigma_r(X) = \min(\dim \mathbb{P}W, r(\dim X + 1) -1)$, we say that $\sigma_r(X)$ is of \emph{expected dimension}.
\end{definition}

\subsection{Cactus rank}\label{subsection:cactus_rank}
For a zero-dimensional scheme $R$ (of finite type over $\mathbb{C}$), let $\length{R}$ denote its length, i.e.\ $\dim_{\mathbb{C}}H^0(R,
\mathcal{O}_R)$.  This is equal to the degree of $R$ in any embedding into projective space. Also for any subscheme $R \hookrightarrow \mathbb{P}W$
define $\langle R \rangle$ to be the linear span of $R$, i.e.\ the smallest projective linear space, through which the inclusion of the scheme factors.

\begin{definition}\label{definition:cactus_rank}
  Define the \emph{$X$-cactus rank} of $F \in W$: 
  \begin{equation*}
    \crr_X(F) = \min \{\length R | R \hookrightarrow X, \dim R = 0, F \in \langle R \rangle \}\text{.} \\
  \end{equation*}
\end{definition}
We have the following inequalities:
\begin{align*}
  \crr(F) &\leq \rr(F)\text{,} \\
  \brr(F) &\leq \rr(F)\text{.} 
\end{align*}

\section{Toric varieties}\label{section:toric_varieties}
\subsection{Quotient construction and the Cox ring}
Let $M$ and $N$ be dual lattices (abelian groups isomorphic to $\mathbb{Z}^k$ for some $k \geq 1$) and $\langle \cdot,\cdot\rangle :M\times N\to
\mathbb{Z}$ be the duality between them. Let $X_\Sigma$ be the toric variety of a fan $\Sigma \subseteq N_\mathbb{R} \coloneqq N\otimes \mathbb{R}$
with no torus factors. The term ``with no torus factors'' means that the linear span of $\Sigma$ in $N_\mathbb{R}$ is the whole space. Let $\Sigma(1)$
denote the set of rays of the fan $\Sigma$. Similarly, $\sigma(1)$ denotes the set of rays in a cone $\sigma$. Then $X_\Sigma$ can be obtained as an
almost geometric quotient of an action of $G \coloneqq \Hom(\Cl X_\Sigma, \mathbb{C}^*)$ on $\mathbb{C}^{\Sigma(1)}\setminus Z$, where $Z$ is a
subvariety of $\mathbb{C}^{\Sigma(1)}$. Let us go briefly through the construction of this quotient. We follow \cite[Section 5.1]{cox_book}. 

Since $X_\Sigma$ has no torus factors, we have an exact sequence
\begin{equation*}
  0 \to M \to \mathbb{Z}^{\Sigma(1)} \to \Cl X_\Sigma \to 0\text{.}
\end{equation*}
After applying $\Hom(-, \mathbb{C}^*)$, this gives
\begin{equation*}
  1 \to \Hom(\Cl X_\Sigma, \mathbb{C}^*) \to (\mathbb{C}^*)^{\Sigma(1)} \to \mathbb{C}^*\otimes N \to 1\text{.}
\end{equation*}
So $G = \Hom(\Cl X_\Sigma, \mathbb{C}^*)$ is a subset of $\mathbb{C}^{\Sigma(1)}$, and the action is given by multiplication on coordinates. Let $S =
\Spec \mathbb{C}[x_\rho | \rho \in \Sigma(1)]$. In other words, $S$ is the polynomial ring with variables indexed by the rays of the fan $\Sigma$. The
ring $S$ is the coordinate ring of the affine space $\mathbb{C}^{\Sigma(1)}$. For a cone $\sigma \in \Sigma$, define
\begin{equation*}
  x^{\widehat{\sigma}} = \prod_{\rho \notin \sigma(1)}{x_\rho}\text{.}
\end{equation*}
Then define a homogeneous ideal in $S$:
\begin{equation}\label{equation:irrelevant_ideal}
  B = B(\Sigma) = (x^{\widehat{\sigma}} | \sigma \in \Sigma) \subseteq S\text{,}
\end{equation}
which is called the irrelevant ideal, and let $Z = Z(\Sigma) \subseteq \mathbb{C}^{\Sigma(1)}$ be the vanishing set of $B$. For a precise construction
of the quotient map $[\cdot]$
\[\begin{tikzcd}
    \mathbb{C}^{\Sigma(1)}\setminus Z \arrow{r}{[\cdot]} & (\mathbb{C}^{\Sigma(1)}\setminus Z)// G = X_\Sigma
\end{tikzcd}\]
see \cite[Proposition 5.1.9]{cox_book}, where it is denoted by $\pi$.

Fix an ordering of all the rays of the fan, let $\Sigma(1) = \{\rho_1,\ldots,\rho_r\}$. Then $S$ becomes $\mathbb{C}[x_{\rho_1},\dots,x_{\rho_r}] =:
\mathbb{C}[x_1,\dots,x_r]$. The ring $S$ is the Cox ring of $X_\Sigma$. For more details, see \cite[Section 5.2]{cox_book}, where $S$ is called the total
coordinate ring. This ring is graded by the class group $\Cl{X_\Sigma}$, where
\begin{equation*}
  \deg x_i = [D_{\rho_i}]\text{,}
\end{equation*}
and $D_{\rho_i}$ is the torus-invariant divisor corresponding to $\rho_i$, see \cite[Chapter 4]{cox_book}. 

\subsection{Saturated ideals}
Take any ideals $I, J \subseteq S$. Let $(I :_S J)$ be the set of all $x \in S$ such that $x \cdot J \subseteq I$; it is an ideal of $S$. It is
sometimes called the quotient ideal, or the colon ideal. For any ideals $I, J, K \subseteq S$ we have:
\begin{itemize}
  \item $I \subseteq (I :_S J)$,
  \item if $J \subseteq K$, then $(I:_S J) \supseteq (I:_S K)$,
  \item $(I :_S J\cdot K) = ((I:_S J):_S K)$.
\end{itemize}
Recall the irrelevant ideal $B \subseteq S$ defined in Equation \eqref{equation:irrelevant_ideal}. Take any ideal $I \subset S$. We define the
$B$-saturation of $I$ as
\begin{equation*}
  \sat{I} = \bigcup_{i\geq 1}{(I:_S B^i)}\text{.}
\end{equation*}
Note that this is an increasing union because $B^i \supseteq B^j$ for $i < j$, so $\sat{I}$ is an ideal. Since $S$ is
Noetherian, the union stabilizes in a finite number of steps. We always have $I \subseteq \sat{I}$. If this is an equality, we say that $I$ is
$B$-saturated. In order to show that $I$ is $B$-saturated, it suffices to find any $i \geq 1$ such that $I = (I :_S B^i)$.

Moreover, if $I$ and $J$ are homogeneous, then so is $(I:_S J)$. It follows that for $I$ homogeneous the ideal $\sat{I}$ is homogeneous.

\begin{example}
  Let us look at the projective space $\mathbb{P}_{\mathbb{C}}^k$. See \cite[Example 5.1.7]{cox_book}. Here $S =
  \mathbb{C}[x_0,\dots,x_k]$, $B = (x_0,\dots,x_k) = \bigoplus_{i \geq 1}S_i$ and $Z = \{0\}$. In this case
  \begin{equation*}
    \sat{I} = \{f \in S | \text{ for all } i = 0,1,\dots,k \text{ there is } n \text{ such that } x_i^n\cdot f \in I \}\text{.}
  \end{equation*}
  Recall that in this case there is a 1-1 correspondence between closed subschemes of $\mathbb{P}_{\mathbb{C}}^k$ and homogeneous $B$-saturated ideals
  of $S$. Moreover, the ideal given by $\bigoplus_{i\geq 0}H^0(X, \mathcal{I}_R\otimes \mathcal{O}(i))$, where $\mathcal{I}_R$ is the ideal sheaf of
  $R$ in $\mathbb{P}_{\mathbb{C}}^k$, is $B$-saturated. For more on this, see \cite[II, Corollary 5.16 and Exercise 5.10]{hartshorne}.
\end{example}

For a toric variety the situation is more complicated. We will assume that the fan $\Sigma$ is simplicial for technical reasons. There can be many
$B$-saturated ideals defining a subscheme $R$. But they have to agree in the $\Pic$ part. See \cite[Theorem 3.7 and the following
discussion]{cox_homogeneous} for more details. Consider the map 
\begin{equation}
  \bigoplus_{\alpha \in \Cl X_\Sigma}H^0(X, \mathcal{I}_R\otimes \mathcal{O}(\alpha)) \to \bigoplus_{\alpha \in \Cl X_\Sigma}H^0(X, \mathcal{O}(\alpha))
  \label{map:saturation}
\end{equation}
induced by $\mathcal{I}_R \hookrightarrow \mathcal{O}_{X_\Sigma}$. We may take $I(R)$ to be the image of this map. This is done in the proof of
\cite[Proposition 6.A.6]{cox_book}. Note that in this case for any $\alpha \in \Pic X_\Sigma$ the vector space $H^0(X_\Sigma,
\mathcal{I}_R\otimes\mathcal{O}(\alpha))$ can be identified with those global sections of $\mathcal{O}(\alpha)$ which vanish on $R$. So let us make
the following
\begin{definition}\label{definition:ideal_of_subscheme}
  Let $X_\Sigma$ be a simplicial toric variety. Let $R \hookrightarrow X_\Sigma$ be a closed subscheme. We define $I(R) \subseteq S$, the ideal of
  $R$, to be the image of homomorphism \eqref{map:saturation}.
\end{definition}
\begin{proposition}
  \label{proposition:ideal}
  Suppose the fan $\Sigma$ is simplicial. Let $\alpha \in \Pic X_\Sigma$ be the class of a Cartier divisor. Let $R \hookrightarrow X_\Sigma$ be any
  closed subscheme. Then $(I(R))_\alpha = (I(R):_S B^i)_\alpha$ for any $i \geq 1$ (hence $I(R)$ agrees with $\sat{I(R)}$ in degree $\alpha$).
\end{proposition}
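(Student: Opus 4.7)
The inclusion $(I(R))_\alpha \subseteq (I(R):_S B)_\alpha$ is immediate from the general fact $I \subseteq (I:_S J)$ for any ideals $I, J$, so I would concentrate all effort on the reverse inclusion.

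For the reverse inclusion, the key point (already noted in the paragraph preceding Definition \ref{definition:ideal_of_subscheme}) is that because $\alpha \in \Pic X_\Sigma$, the sheaf $\mathcal{O}(\alpha)$ is a genuine line bundle, so the natural map $H^0(X_\Sigma, \mathcal{I}_R \otimes \mathcal{O}(\alpha)) \to H^0(X_\Sigma, \mathcal{O}(\alpha)) = S_\alpha$ is injective with image exactly $(I(R))_\alpha$. Therefore an element $f \in S_\alpha$ lies in $(I(R))_\alpha$ precisely when, viewed as a section of the line bundle $\mathcal{O}(\alpha)$, it vanishes scheme-theoretically on $R$. The task reduces to: given $f \in (I(R):_S B)_\alpha$, show this vanishing.

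I would check it locally on the standard affine cover $\{U_\sigma\}_{\sigma \in \Sigma}$ of $X_\Sigma$. On $U_\sigma$ the monomial $x^{\widehat{\sigma}}$ is nowhere zero: by construction of the quotient $\pi \colon \mathbb{C}^{\Sigma(1)}\setminus Z \to X_\Sigma$, the preimage $\pi^{-1}(U_\sigma)$ is exactly the locus where $x^{\widehat{\sigma}}$ is nonvanishing. Since $x^{\widehat{\sigma}} \in B$, the saturation hypothesis gives $f \cdot x^{\widehat{\sigma}} \in I(R)$, and hence the pullback of $f \cdot x^{\widehat{\sigma}}$ under $\pi$ vanishes on $\pi^{-1}(R \cap U_\sigma) \subseteq \pi^{-1}(U_\sigma)$. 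Because $x^{\widehat{\sigma}}$ is a unit on this open set, dividing by it shows that the pullback of $f$ itself vanishes on $\pi^{-1}(R \cap U_\sigma)$, and descending through the almost geometric quotient $\pi$ yields that $f|_{U_\sigma}$ vanishes on $R \cap U_\sigma$ as a section of the line bundle $\mathcal{O}(\alpha)|_{U_\sigma}$. As the $U_\sigma$ cover $X_\Sigma$, this gives $f \in (I(R))_\alpha$.

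The delicate point I anticipate is giving a rigorous meaning to the vanishing of $f \cdot x^{\widehat{\sigma}}$, whose degree $\alpha + \deg x^{\widehat{\sigma}}$ need not lie in $\Pic X_\Sigma$, so that $\mathcal{O}(\alpha + \deg x^{\widehat{\sigma}})$ is only a reflexive sheaf of rank one and the identification $(I(R))_{\alpha+\deg x^{\widehat{\sigma}}} = H^0(\mathcal{I}_R \otimes \mathcal{O}(\alpha+\deg x^{\widehat{\sigma}}))$ can fail. My way around this is to argue entirely on the affine presentation: work with the $G$-equivariant ideal in $\mathbb{C}[x_\rho : \rho \in \Sigma(1)]$ cut out by $\pi^{-1}(R) \cap (\mathbb{C}^{\Sigma(1)}\setminus Z)$, where vanishing is an unambiguous statement about regular functions and \emph{division by $x^{\widehat{\sigma}}$} is honest localization at a unit. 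Simpliciality of $\Sigma$ is what makes the quotient construction run as in \cite[Section 5.1]{cox_book} and is implicitly used when converting between homogeneous elements of $S$, sections on $X_\Sigma$, and their restrictions to the charts $U_\sigma$.
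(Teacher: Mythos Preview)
Your overall strategy---reduce to showing that $f$ vanishes on $R$ as a section of the line bundle $\mathcal{O}(\alpha)$, and check this locally by multiplying by an element of $B$ that is a unit on the chart---is exactly the paper's. You also correctly isolate the one genuine obstacle: the degree of $x^{\widehat{\sigma}}$ need not lie in $\Pic X_\Sigma$, so $f\cdot x^{\widehat{\sigma}}\in I(R)$ cannot be read directly as ``a section of a line bundle vanishing on $R$''.

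Where you diverge is in how you resolve that obstacle. The paper's remedy is a one-line trick: since $\Sigma$ is simplicial, $\Pic X_\Sigma$ has finite index in $\Cl X_\Sigma$, so after replacing a homogeneous $b\in B$ nonvanishing near a point $p\in R$ by a suitable power one may assume $\deg b=\beta\in\Pic X_\Sigma$. Then $f\cdot b\in I(R)_{\alpha+\beta}$ really is a section of the line bundle $\mathcal{O}(\alpha+\beta)$ vanishing on $R$, and since $b$ trivializes $\mathcal{O}(\beta)$ near $p$, this forces $f$ to vanish on $R$ there. This is exactly where simpliciality enters, and it avoids passing to the affine presentation altogether.

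Your workaround---pulling everything back to $\mathbb{C}^{\Sigma(1)}\setminus Z$ and arguing with the ideal of $\pi^{-1}(R)$---can be made rigorous, but as written it skips a step: you do not verify that elements of $I(R)_\gamma$ for arbitrary $\gamma\in\Cl X_\Sigma$ actually lie in the vanishing ideal of $\pi^{-1}(R)$. Concretely, one has to check on each $U_\sigma$ that the image of $\mathcal{I}_R\otimes\mathcal{O}(\gamma)\to\mathcal{O}(\gamma)$ lands in $J_\sigma\cdot(S_{x^{\widehat{\sigma}}})_\gamma$, where $J_\sigma$ is the ideal of $R\cap U_\sigma$; this is true but not said. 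Your explanation of where simpliciality is used (``makes the quotient construction run'') is also too vague---the quotient exists for any fan without torus factors---whereas the paper's power-trick pinpoints its role precisely.
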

\begin{proof}
  Take $x \in S_\alpha$ such that $x \cdot B^i \subseteq I(R)$. It is enough to show that $x$ is zero on $R$. Take any point $p \in R$. We will show
  that $x$ is zero on $R$ around that point. Since the vanishing set of $B$ is empty, we know that some homogeneous element $b \in B$ is non-zero at
  $p$. By taking a large enough power, we may assume $b \in (B^i)_\beta$ for some $\beta \in \Pic X_\Sigma$ (here we use that $\Sigma$ is
  simplicial!). Because $b$ is non-zero at $p$, there is an open neighbourhood $p \in U \subseteq X_\Sigma$ such that $\mathcal{O}_{X_\Sigma}(\beta)$
  is trivialized on $U$ by $b$. But then $x$ is zero when pulled back to $R$ on $U$ if and only if $x \cdot b$ is zero when pulled back to $R$ on $U$.
  But the latter thing is true as $x \cdot b \in I(R)$.
\end{proof}

\subsection{Isomorphism between sections and polynomials}
Let $\alpha \in \Cl X_\Sigma$. Recall the isomorphism of $H^0(X_\Sigma,\mathcal{O}(\alpha))$ and
$\mathbb{C}[x_1,\ldots,x_r]_\alpha$ given in \cite[Proposition 5.3.7]{cox_book}. 

\begin{proposition}\label{proposition:isomorphism}
  Suppose $\alpha \in \Pic X_\Sigma$. Take any section $s \in H^0(X_\Sigma, \mathcal{O}(\alpha))$ and the corresponding polynomial $f \in S_\alpha$.
  Also let $p$ be a point in $X_\Sigma$ and take any $(\lambda_1,\ldots,\lambda_r) \in \mathbb{C}^r$ such that $[\lambda_1,\dots,\lambda_r] = p$.
  Then
  \begin{equation*}
    s(p) = 0 \iff f(\lambda_1,\dots,\lambda_r) = 0\text{.}
  \end{equation*}
\end{proposition}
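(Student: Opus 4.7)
The plan is to compare $s$ and $f$ after pulling back along the quotient map $\pi$. Since $\alpha \in \Pic X_\Sigma$, write $\alpha = [D]$ for a torus-invariant Cartier divisor $D = \sum_\rho a_\rho D_\rho$. The construction recalled in Cox–Little–Schenck then identifies the polynomial $f \in S_\alpha$ with the pullback of $s$ to an open subset of affine space: more precisely, $\pi^*\mathcal{O}(\alpha)$ is a $G$-linearized line bundle on $\mathbb{C}^{\Sigma(1)} \setminus Z$, and under a natural $G$-equivariant trivialization the global section $\pi^* s$ is identified with the semi-invariant polynomial $f$. (The fact that such a trivialization exists uses that $Z$ has codimension $\geq 2$ in $\mathbb{C}^{\Sigma(1)}$ since $\Sigma$ has no torus factors, and that $G$-semi-invariants of weight $\alpha$ on $\mathbb{C}^{\Sigma(1)} \setminus Z$ extend to $S_\alpha$.) This is the content of \cite[Proposition 5.3.7]{cox_book}, and the extension to arbitrary $(\lambda_1,\dots,\lambda_r) \in \pi^{-1}(p)$ is the issue I want to pin down.

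Next I verify the vanishing locally. Pick a maximal cone $\sigma$ with $p \in U_\sigma$. Since $D$ is Cartier, there is $m_\sigma \in M$ with $\langle m_\sigma, u_\rho \rangle = -a_\rho$ for all $\rho \in \sigma(1)$; the associated character $\chi^{m_\sigma}$ trivializes $\mathcal{O}(\alpha)$ on $U_\sigma$. Under the Cox identification the corresponding generator on the torsor side is the monomial $x^{\widehat{\sigma}, m_\sigma} := \prod_{\rho \notin \sigma(1)} x_\rho^{\langle m_\sigma, u_\rho\rangle + a_\rho}$, a homogeneous polynomial in $S_\alpha$ that is nowhere zero on $\pi^{-1}(U_\sigma)$. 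Writing $s|_{U_\sigma} = g \cdot \chi^{m_\sigma}$ for a regular function $g$ on $U_\sigma$, the identification gives $f = g \circ \pi \cdot x^{\widehat{\sigma}, m_\sigma}$ on $\pi^{-1}(U_\sigma)$.

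Now the conclusion is immediate. The point $p \in U_\sigma$ satisfies $s(p) = 0$ exactly when $g(p) = 0$, and picking any $(\lambda_1,\dots,\lambda_r) \in \pi^{-1}(p) \subseteq \pi^{-1}(U_\sigma)$, we have $g(p) = (g\circ \pi)(\lambda_1,\dots,\lambda_r)$ and $x^{\widehat{\sigma}, m_\sigma}(\lambda_1,\dots,\lambda_r) \neq 0$. Hence $s(p) = 0 \iff f(\lambda_1,\dots,\lambda_r) = 0$. Independence of the choice of $\sigma$ (or of the cone containing $p$) follows automatically since both conditions are independent of that choice.

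The main obstacle is the first paragraph: making the identification $f \leftrightarrow s$ explicit enough to match pullbacks. Once one commits to the description of the isomorphism via $G$-semi-invariants and the local monomial trivializations $x^{\widehat{\sigma}, m_\sigma}$, the rest is a direct local check. Note that we genuinely use $\alpha \in \Pic$ rather than $\Cl$: for a merely Weil (not Cartier) $\alpha$, the local trivializing monomial need not exist and the equivalence can fail at the non-Cartier locus.
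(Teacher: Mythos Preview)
Your argument is correct and follows essentially the same route as the paper: choose a cone $\sigma$ with $p\in U_\sigma$, use the Cartier data $m_\sigma$ to produce the trivializing monomial $\prod_{\rho\notin\sigma(1)} x_\rho^{\langle m_\sigma,u_\rho\rangle+a_\rho}$, and reduce the question to a degree-zero regular function constant on fibers of $\pi$. One small slip: that monomial need not lie in $S_\alpha$ (the exponents $\langle m_\sigma,u_\rho\rangle+a_\rho$ can be negative), only in $(S_{x^{\widehat{\sigma}}})_\alpha$; this is harmless since you only evaluate it on $\pi^{-1}(U_\sigma)$, where all $x_\rho$ with $\rho\notin\sigma(1)$ are invertible.
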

\begin{proof}
  Take any $\sigma$ such that $p \in U_\sigma$. We will trivilize the line bundle $\mathcal{O}(\alpha)$ on $U_\sigma$ in order to move the situation
  to regular functions on $U_\sigma$. We will do it by finding a section that is nowhere zero both as a polynomial and as a section.

  We know that $U_\sigma = \Spec (S_{x^{\widehat{\sigma}}})_0$, where $x^{\widehat{\sigma}}=\prod_{\rho \notin \sigma}{x_\rho}$, the inner subscript
  refers to localization, and the outer one is taking degree $0$. From the definition of $\mathcal{O}(\alpha)$ we have
  $H^0(U_\sigma,\mathcal{O}(\alpha)) = (S_{x^{\widehat{\sigma}}})_\alpha$. Our goal is to find a monomial in $(S_{x^{\widehat{\sigma}}})_\alpha$ which
  is nowhere zero as a section.  Take any torus-invariant representative $\sum_{\rho}{a_\rho D_{\rho}}$ of class $\alpha$ (here $a_\rho \in
  \mathbb{Z}$). From \cite[Theorem 4.2.8]{cox_book} there exists an $m_\sigma \in M$ such that $\langle m_{\sigma},u_\rho\rangle = -a_\rho$ for $\rho
  \in \sigma(1)$ (here $M$ is the lattice of characters as in the beginning of Section \ref{section:toric_varieties}, $\sigma(1)$ is the set of rays of
  the cone $\sigma$, and $u_\rho \in N$ is the generator of ray $\rho$). Then
  \begin{equation*}
    \sum_{\rho}{\langle m_\sigma,u_\rho \rangle D_\rho} +
    \sum_{\rho}a_\rho D_\rho = \sum_{\rho \notin \sigma(1)}(\langle m_\sigma,u_\rho \rangle + a_\rho) D_\rho
  \end{equation*}
  belongs to the class $\alpha$ as well. This is a direct consequence of the exact sequence \cite[Theorem 4.2.1]{cox_book}. The outcome is that the
  monomial
  \begin{equation}\label{equation:monomial}
    g \coloneqq \prod_{\rho \notin \sigma(1)}x_\rho^{\langle m_\sigma,u_\rho \rangle + a_\rho}
  \end{equation}
  has degree $\alpha$.  Notice that it belongs
  to $(S_{x^{\widehat{\sigma}}})_\alpha$. 

  We want to show that $g$ is nowhere zero as a section of $\mathcal{O}(\alpha)$. The polynomial $g \in S_{x^{\widehat{\sigma}}}$ is invertible, with
  inverse $g^{-1} \in (S_{x^{\widehat{\sigma}}})_{-\alpha}$. But then $g^{-1}\cdot g = 1 \in (S_{x^{\widehat{\sigma}}})_0$. If $g$ were zero at some
  point $p \in X_\Sigma$, then we would have $0 = g^{-1}(p)\cdot g(p) = 1$, a contradiction. An analogous proof shows that $g$ is nowhere zero as a
  polynomial.

  Now we can set $\bar{f} = g^{-1}f$ and then $\bar{f}$ is a regular function on $\Spec(S_{x^{\widehat{\sigma}}})_0$.  We need to see that $\bar{f}(p)
  = 0$ is equivalent to $\bar{f}(\lambda_1,\dots,\lambda_r) = 0$. In fact, even more is true: $\bar{f}(p) = \bar{f}(\lambda_1,\dots,\lambda_r)$. To
  see this, consider the projection $\mathbb{C}^r \setminus Z \xra{[\cdot]} X_\Sigma$ restricted to the inverse image of $U_\sigma$. This corresponds
  to the homomorphism of algebras $[\cdot]^*_\sigma : \mathbb{C}[\sigma^\vee \cap M] \to S_{x^{\widehat{\sigma}}}$ given by
  \begin{equation*}
    \chi^m \mapsto \prod_{\rho \in \Sigma(1)} x_\rho^{\langle m, u_\rho\rangle}\text{,}
  \end{equation*}
  see \cite[Proof of Theorem 5.1.11]{cox_book}. Here $\sigma^\vee$ is the dual cone, $\chi^m$ is the character corresponding to $m$, and $\langle
  \cdot,\cdot \rangle$ is the standard pairing between $M$ and $N$. Let us look at the following diagram
  \[\begin{tikzcd}
      \mathbb{C}[\sigma^\vee \cap M] \ar{r}{[\cdot]_\sigma^*}\ar[rd, "\ev_p",swap]
      & S_{x^{\widehat{\sigma}}} \ar[d, "\ev_\lambda"] \\
      & \mathbb{C}\text{,}
  \end{tikzcd}\]
  where $\ev$ denotes the evaluation. When we apply the functor $\Spec$ to the diagram, it becomes commutative (since $[\lambda] = p$). As $\Spec$ is
  an equivalence of categories, the original diagram is commutative. But this means that $\bar{f}(p) = \bar{f}(\lambda_1,\dots,\lambda_r)$, as
  desired.
\end{proof}

\begin{corollary}\label{corollary:isomorphism}
  Suppose $\alpha \in \Pic X_\Sigma$. Suppose $f_1, f_2 \in S_\alpha$ are polynomials and $s_1, s_2$ are the corresponding sections of
  $\mathcal{O}(\alpha)$.  Also fix, as above, $p \in X_\Sigma$ and $(\lambda_1,\dots,\lambda_r) \in \mathbb{C}^r$ such that
  $[\lambda_1,\dots,\lambda_r] = p$. Then if $f_2(\lambda_1,\dots,\lambda_r)$ and $s_2(p)$ are non-zero, we get
  \begin{equation*}
    \frac{f_1(\lambda_1,\dots,\lambda_r)}{f_2(\lambda_1,\dots,\lambda_r)} = \frac{s_1(p)}{s_2(p)}\text{.}
  \end{equation*}
\end{corollary}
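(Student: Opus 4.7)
The plan is to reduce the corollary to the local trivialization set up in the proof of Proposition \ref{proposition:isomorphism}. Pick a cone $\sigma \in \Sigma$ with $p \in U_\sigma$, and recall that the preimage $\pi^{-1}(U_\sigma)$ is the open set where $x^{\widehat{\sigma}} \neq 0$, which contains $(\lambda_1,\dots,\lambda_r)$. Exactly as in the proof of Proposition \ref{proposition:isomorphism}, produce a monomial $g \in (S_{x^{\widehat{\sigma}}})_\alpha$ of degree $\alpha$ which is invertible in the localized ring $S_{x^{\widehat{\sigma}}}$; this monomial is therefore nowhere zero both as a polynomial on $\Spec S_{x^{\widehat{\sigma}}}$ and as a section of $\mathcal{O}(\alpha)$ on $U_\sigma$, so it trivializes $\mathcal{O}(\alpha)$ there.

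Next, set $\bar{f}_i \coloneqq g^{-1} f_i \in (S_{x^{\widehat{\sigma}}})_0$ for $i = 1,2$. Under the trivialization of $\mathcal{O}(\alpha)|_{U_\sigma}$ by $g$, the section $s_i$ corresponds to the regular function $\bar{f}_i$ on $U_\sigma = \Spec(S_{x^{\widehat{\sigma}}})_0$. As observed at the end of the proof of Proposition \ref{proposition:isomorphism}, for any such degree-zero element, evaluation at $p$ and at any preimage $(\lambda_1,\dots,\lambda_r)$ coincide, so
\begin{equation*}
  \bar{f}_i(\lambda_1,\dots,\lambda_r) = \bar{f}_i(p) \qquad (i = 1, 2).
\end{equation*}

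Now simply take the quotient. On the polynomial side, $f_i = g \cdot \bar{f}_i$ in $S_{x^{\widehat{\sigma}}}$, so provided $f_2(\lambda_1,\dots,\lambda_r) \neq 0$ we get
\begin{equation*}
  \frac{f_1(\lambda_1,\dots,\lambda_r)}{f_2(\lambda_1,\dots,\lambda_r)} = \frac{\bar{f}_1(\lambda_1,\dots,\lambda_r)}{\bar{f}_2(\lambda_1,\dots,\lambda_r)}.
\end{equation*}
On the sections side, in the trivialization by $g$, the ratio $s_1(p)/s_2(p)$ equals $\bar{f}_1(p)/\bar{f}_2(p)$ since $g(p) \neq 0$. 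Combining these with the equality $\bar{f}_i(\lambda) = \bar{f}_i(p)$ yields the desired identity.

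There is no real obstacle: the only thing to keep straight is that the nonvanishing hypotheses on $f_2$ and $s_2$ are internally consistent (which is precisely what Proposition \ref{proposition:isomorphism} guarantees), and that the equality of polynomial and sectional evaluation was already the content of the last sentence of that proposition's proof; the corollary is essentially a bookkeeping step extracted from it.
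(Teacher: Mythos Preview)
Your argument is correct, but the paper takes a much shorter route. Rather than reopening the local trivialization from the proof of Proposition~\ref{proposition:isomorphism}, the paper simply sets $\mu \coloneqq f_1(\lambda_1,\dots,\lambda_r)/f_2(\lambda_1,\dots,\lambda_r)$ and applies Proposition~\ref{proposition:isomorphism} as a black box to the single polynomial $f_1 - \mu f_2$ and its corresponding section $s_1 - \mu s_2$: since $(f_1 - \mu f_2)(\lambda_1,\dots,\lambda_r) = 0$, the proposition gives $(s_1 - \mu s_2)(p) = 0$, whence $s_1(p)/s_2(p) = \mu$. Your approach has the virtue of making explicit that both ratios are literally the same regular function evaluated at compatible points, but it duplicates the internal machinery of the proposition; the paper's trick of passing to a linear combination avoids that repetition entirely.
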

\begin{proof}
  Take $\mu \in \mathbb{C}$ such that $f_1(\lambda_1,\dots,\lambda_r) = \mu f_2(\lambda_1,\dots,\lambda_r)$. Then use the previous fact for $f_1 - \mu f_2$
  and the corresponding section $s_1 - \mu s_2$.
\end{proof}

\subsection{Generators of the class group}\label{subsection:freeness_of_the_class_group}
\begin{proposition}\label{proposition:basis_of_class_group}
  Let $X_\Sigma$ be a smooth complete variety. Pick any $\sigma \in \Sigma$ of full dimension. Let $\rho_1,\dots,\rho_d$ be the rays that are not in
  $\sigma$. Then the classes $[D_{\rho_1}],\dots,[D_{\rho_d}]$ are a basis of the class group.
\end{proposition}
\begin{proof}
  In the proof of Proposition \ref{proposition:isomorphism}, given a maximal cone $\sigma \in \Sigma$, and a class $\alpha \in \Pic X_\Sigma$, we
  constructed a monomial $g$ of degree $\alpha$, such that none of its rays belonged to $\sigma$ (see Equation \eqref{equation:monomial}). This means
  that the classes generate the class group (we use here that for smooth varieties $\Pic X_\Sigma = \Cl X_\Sigma$).

  Now consider the exact sequence
  \begin{equation*}
    0 \to M \to \mathbb{Z}^{\Sigma(1)} \to \Cl X_\Sigma \to 0\text{.}
  \end{equation*}
  From this we get that $\rank \Cl X_\Sigma = \#\Sigma(1) - \dim M_\mathbb{C}$, which is equal to the number of rays not in $\sigma$, since the cone
  $\sigma$ is smooth. But now from the exact sequence
  \begin{equation*}
    0 \to \mathbb{Z}^{l} \to \bigoplus_{\rho \notin \sigma(1)}\mathbb{Z}[D_{\rho}] \to \Cl X_\Sigma \to 0
  \end{equation*}
  we get that $l$ = 0, so
  \begin{equation*}
    \Cl X_\Sigma \cong \bigoplus_{\rho \notin \sigma(1)}\mathbb{Z}[D_{\rho}]\text{,}
  \end{equation*}
  as desired.

\end{proof}

\subsection{Dehomogenization and homogenization}\label{subsection:dehomo_and_homo}

Let $X_\Sigma$ be a smooth projective toric variety. Denote the rays of the fan by $\rho_1,\dots,\rho_r$.  Fix $\sigma \in \Sigma$. We want to
restrict $X_\Sigma$ to the affine patch $U_\sigma$. Suppose the rays that are not in $\sigma$ are $\rho_1,\dots, \rho_k$. Then the restriction
corresponds to setting $x_1,\dots, x_k$ to $1$. Denote by $\pi$ the dehomogenization on $T$ (i.e.\ setting every power $y_i^d$ to $1$) and by $\pi^*$
the dehomogenization on $S$ (i.e.\ setting $x_1,\dots,x_k$ to $1$).

\begin{proposition}\label{proposition:injectivity}
  For any $\alpha \in \Cl X_\Sigma = \Pic X_\Sigma$ the map
  \begin{equation*}
    \pi : T_\alpha \to T
  \end{equation*}
  is injective. So is the map
  \begin{equation*}
    \pi^* : S_\alpha \to S \text{.}
  \end{equation*}
\end{proposition}
\begin{proof}
  Suppose $\pi(F) = 0$ for some $0 \neq F \in T_\alpha$. Then there exist two different monomials $y_1^{c_1} \ccdots y_r^{c_r}$ and $y_1^{d_1}\ccdots
  y_r^{d_r}$ of degree $\alpha$ such that after applying $\pi$ they are the same. This means that $c_{k+1} = d_{k+1},\dots,c_r = d_r$, so $\deg
  y_1^{c_1}\ccdots y_k^{c_k} = \deg y_1^{d_1} \ccdots y_k^{d_k}$. The tuples $(c_1,\dots,c_k)$ and $(d_1,\dots,d_k)$ are different, so this gives a
  non-trivial relation between the classes corresponding to $y_1,\dots,y_k$, contradicting Proposition \ref{proposition:basis_of_class_group}.

  A similar proof works for $\pi^*$.
\end{proof}

Now let us define the homogenization $f^\text{h}$ of a non-zero polynomial $f \in \mathbb{C}[x_{k + 1},\dots, x_r]$. Suppose
\begin{equation*}
  f = \sum_{\alpha \in \Cl X_\Sigma}{f_\alpha}\text{,}
\end{equation*}
where each $f_\alpha$ is homogeneous of degree $\alpha$. Let $D_i$ be the divisor corresponding to $\rho_i$. From Proposition
\ref{proposition:basis_of_class_group} we know that the classes $[D_i]$, where $i = 1,\dots,k$, form a basis of the class group. Hence, for each
$\alpha$ such that $f_\alpha \neq 0$ we have
\begin{equation*}
  \alpha = a_{\alpha,1} [D_1] + \ldots + a_{\alpha,k} [D_k] \text{,}
\end{equation*}
where $a_{\alpha,i} \in \mathbb{Z}$. Let $b_i = \max\{a_{\alpha,i} | \alpha \in \Cl X_\Sigma, f_\alpha \neq 0\}$. Then we set
\begin{equation*}
  f^\text{h} = \sum_{\alpha \in \Cl X_\Sigma } {x_1^{b_1 - a_{\alpha,1}}\ccdots x_k^{b_k - a_{\alpha,k}} f_\alpha}\text{.}
\end{equation*}
This is homogeneous of degree $b_1 [D_1] +\ldots + b_k [D_k]$.
\begin{proposition}
  \label{proposition:hom_dehom}
  Suppose $f \in S$ is homogeneous and non-zero and let $f = x_1^{e_1}\ccdots x_k^{e_k} \hat{f}$, where the $e_i$ are natural, and $\hat{f}$ is not
  divisible by any of the $x_i$. Then $(\pi^*(f))^\text{h} = \hat{f}$.
\end{proposition}
\begin{proof}
  We know that $\pi^*(f)$ is non-zero from Proposition \ref{proposition:injectivity}. Let $\pi^*(f) = \sum_{\alpha \in \Cl X_\Sigma} g_\alpha$, where
  each $g_\alpha$ has degree $\alpha$. Then
  \begin{equation*}
    f = x_1^{e_1}\ccdots x_k^{e_k} \cdot \left( \sum_{\alpha \in \Cl X_\Sigma}{x_1^{d_{\alpha,1}}\ccdots x_r^{d_{\alpha,r}} g_\alpha}\right)
  \end{equation*}
  for some natural $d_{\alpha,i}$. Suppose that
  \begin{equation*}
    \alpha = a_{\alpha,1} [D_1] + \ldots + a_{\alpha,k} [D_k] \text{,}
  \end{equation*}
  where $a_{\alpha,i}$ are integers. Then
  \begin{equation*}
    \deg f = (e_1 + d_{\alpha,1} + a_{\alpha,1}) [D_1] + \ldots + (e_k + d_{\alpha,k} + a_{\alpha,k}) [D_k] \text{.}
  \end{equation*}
  This is true for any $\alpha \in \Cl X_\Sigma$ such that $g_\alpha \neq 0$, so we can set $c_i = d_{\alpha,i} + a_{\alpha,i}$. We know that $c_i
  \geq a_{\alpha,i}$ for each $\alpha$, so $c_i \geq \max_\alpha a_{\alpha,i}$. If $c_i > \max_\alpha a_{\alpha,i}$ for some $i$, then $x_i$ divides
  $\hat{f}$, which is a contradiction. Hence $c_i = \max_\alpha a_{\alpha,i}$, and therefore $d_{\alpha,i} = \max_{\alpha}a_{\alpha,i} -
  a_{\alpha,i}$. It follows that $\hat{f}$ is the homogenization of $\pi^*(f)$.
\end{proof}

\begin{definition}
  Suppose $I \subseteq \mathbb{C}[x_{k + 1},\dots, x_r]$ is a non-zero ideal. Let
  \begin{equation*}
    I^\text{h} = (f^\text{h} | f \in I\setminus \{0\})
  \end{equation*}
  be the homogenization of $I$.
\end{definition}
\begin{proposition}
  \label{proposition:hom_saturated}
  The ideal $I^\text{h}$ is saturated with respect to $x_1 \ccdots x_k$.
\end{proposition}
\begin{proof}
  Suppose that $x_1 \ccdots x_k f \in I^\text{h}$ for some non-zero $f$, then
  \begin{equation*}
    x_1 \ccdots x_k f = g_1 f_1^\text{h} + \ldots + g_l f_l^\text{h}
  \end{equation*}
  for some $g_i \in S$ and $f_i \in I$. If we set $x_1,\dots,x_k$ to $1$, we get
  \begin{equation*}
    \pi^*(f) = \pi^*(g_1) f_1 + \ldots + \pi^*(g_l) f_l\text{.}
  \end{equation*}
  This means that $\pi^*(f) \in I$, and it follows that $(\pi^*(f))^\text{h} \in I^\text{h}$ from the definition of $I^\text{h}$. But $f$ is divisible
  by $(\pi^*(f))^\text{h}$ from Proposition \ref{proposition:hom_dehom}, so $f \in I^\text{h}$.
\end{proof}
\begin{proposition}
  \label{proposition:binomial}
  Suppose $I \subseteq \mathbb{C}[x_{k+1},\dots,x_{r}]$ is generated by binomials of the form $x^{a} - x^{b}$. Then
  \begin{equation*}
    I^\text{h} = ( (x^a - x^b)^\text{h} | x^a - x^b \in I \setminus 0)\text{.}
  \end{equation*}
\end{proposition}
\begin{lemma}
  \label{lemma:binomial}
  Suppose $I \subseteq \mathbb{C}[x_{k+1},\dots,x_{r}]$ is generated by binomials of the form $x^{a} - x^{b}$ and that $f \in I$. Then there are
  binomials $x^{c_i} - x^{d_i} \in I$ and $\lambda_i \in \mathbb{C}$, where $i = 1,2,\dots,l$, such that
  \begin{equation*}
    f = \sum_{i = 1}^l \lambda_i (x^{c_i} - x^{d_i})
  \end{equation*}
  and every $x^{c_i}, x^{d_i}$ appears as a monomial of $f$ with a non-zero coefficient.
\end{lemma}
\begin{proof}
  Suppose
  \begin{equation*}
    f = \sum_{i = 1}^m \kappa_i (x^{a_i} - x^{b_i})\text{,}
  \end{equation*}
  where $x^{a_i} - x^{b_i} \in I$ and $\kappa_i \in \mathbb{C}\setminus\{0\}$ for $i = 1,2,\dots,m$. Suppose that some monomial $x^b$ appears in the sum on
  right-hand side and that it does not appear on the left-hand side. Possibly changing the signs of some $\kappa_i$, we may assume that there are
  indices $i_1,\dots,i_n$ such that $b_{i_1} = \dots = b_{i_n} = b$ and $\sum_{j=1}^n \kappa_{i_j} = 0$, and that $b$ appears nowhere else in the sum on
  the right-hand side. In this case $\kappa_{i_1} = -\sum_{j=2}^n \kappa_{i_j}$ and therefore
  \begin{equation*}
    f = \sum_{i | b_i \neq b}\kappa_i (x^{a_i} - x^{b_i}) + \sum_{j = 2}^{n}\kappa_{i_j}(x^{a_{i_j}} - x^{a_{i_1}}) \text{,}
  \end{equation*}
  where each $x^{a_{i_j}} - x^{a_{i_1}} = (x^{a_{i_j}} - x^{b_{i_j}}) - (x^{a_{i_1}} - x^{b_{i_1}}) \in I$. We have reduced the number of summands on
  the right-hand side. Continuing this process, we get to the situation where every monomial on the right-hand side appears on the left-hand side with
  a non-zero coefficient.
\end{proof}
\begin{proof}[Proof of Proposition \ref{proposition:binomial}]
  Let $f \in I$ be a non-zero polynomial. From Lemma \ref{lemma:binomial} we get that there are $\lambda_i \in \mathbb{C}$ and $x^{c_i} - x^{d_i}$,
  where $i = 1,\dots,l$, such that
  \begin{equation*}
    f = \sum_{i = 1}^l \lambda_i (x^{c_i} - x^{d_i})
  \end{equation*}
  and every $x^{c_i}, x^{d_i}$ appears as a monomial $x^{e_s}$ of $f$ with a non-zero coefficient. Let $s(i), s'(i)$ be such that $x^{c_i} =
  x^{e_{s(i)}}$ and $x^{d_i} = x^{e_{s'(i)}}$. We need to get back to the definition of homogenization. Suppose $x^{e_s}$ has
  class $a_{s,1} [D_1] + \dots + a_{s,k} [D_k]$. Then $x^{c_i}$ has class $a_{s(i),1} [D_1] + \dots + a_{s(i),k} [D_k]$, and $x^{d_i}$ has class
  $a_{s'(i),1} [D_1] + \dots + a_{s'(i),k} [D_k]$. It follows that
  \begin{equation*}
    (x^{c_i} - x^{d_i})^\text{h} = x_1^{b_{i,1} - a_{s(i),1}} \ccdots x_k^{b_{i,k} - a_{s(i),k}} x^{c_i} - x_1^{b_{i,1} - a_{s'(i),1}} \ccdots
    x_k^{b_{i,k} - a_{s'(i),k}} x^{d_i}\text{,}
  \end{equation*}
  where $b_{i,j} = \max(a_{s(i),j}, a_{s'(i),j})$. Let
  \begin{equation*}
    f = \sum_{s=1}^m \mu_s x^{e_s}\text{.}
  \end{equation*}
  Then
  \begin{equation*}
    f^\text{h} = \sum_{s=1}^m \mu_s x_1^{\bar{b}_1 - a_{s,1}} \ccdots x_k^{\bar{b}_k - a_{s,k}} x^{e_s}\text{.}
  \end{equation*}
  Here 
  \begin{multline*}
    \bar{b}_j = \max\{ a_{s,j} | s = 1,\dots,m \} = \max\{ a_{s(i),j}, a_{s'(i),j} | i =1,\dots,l \} \\
    = \max\{ b_{i,j} | i=1,\dots,l
  \}\text{,}
  \end{multline*}
  as every $x^{c_i}, x^{d_i}$ appears as a monomial of $f$.
  Hence
  \begin{align*}
    f^\text{h} &= \sum_{s=1}^l \mu_s x_1^{\bar{b}_1 - a_{s,1}} \ccdots x_k^{\bar{b}_k - a_{s,k}}x^{e_s} \\
    &= \sum_{i=1}^l \lambda_i\Big(x_1^{\bar{b}_1 - a_{s(i),1}} \ccdots x_k^{\bar{b}_k - a_{s(i),k}} x^{c_i} 
    - x_1^{\bar{b}_1 - a_{s'(i),1}} \ccdots x_k^{\bar{b}_k - a_{s'(i),k}} x^{d_i}\Big) \\
    &= \sum_{i=1}^l \lambda_i x_1^{\bar{b}_1- b_{i,1}} \ccdots x_k^{\bar{b}_k - b_{i,k}} \Big(x_1^{b_{i,1}- a_{s(i),1}} \ccdots x_k^{b_{i,k} - a_{s(i),k}}
    x^{c_i}\\ &- x_1^{b_{i,1} - a_{s'(i),1}} \ccdots x_k^{b_{i,k} - a_{s'(i),k}} x^{d_i}\Big) \\
    &= \sum_{i=1}^l \lambda_i x_1^{\bar{b}_1- b_{i,1}} \ccdots x_k^{\bar{b}_k - b_{i,k}}(x^{c_i} - x^{d_i})^\text{h}\text{.}
  \end{align*}
\end{proof}

\subsection{Embedded tangent space}
Let $X_P$ be the toric variety embedded by a very ample polytope $P$ with vertices in lattice $M$. Let $v$ be a vertex of the polytope $P$ (which
corresponds to a torus fixed point $p \in X_P$).
\begin{proposition}\label{proposition:embedded_tangent_space}
  The projective embedded tangent space at $p$ in the embedding by $P$ is given by the linear space of the lattice points of $P$ which belong to the
  Hilbert basis of the semigroup $\mathbb{N}(P\cap M - v)$.
\end{proposition}
\begin{proof}
  Let $z_0,\dots,z_k$ be the coordinates corresponding to the monomials in the embedding by $P$, with $z_0$ corresponding to the vertex $v$. Let us
  look at the affine chart given by setting $z_0 = 1$. The equations of the toric variety in the affine chart come from integral relations between the
  lattice points of $P \cap M - v$. The equations of the embedded tangent space at $p$ (in the affine chart) are given in the following way: the forms
  \begin{equation*}
    \sum_{i=1}^k \frac{\partial f}{\partial z_i}_{|[z_1,\dots,z_k] = [0,0,\dots,0]} z_i \text{,}
  \end{equation*}
  where $f$ is an equation of the embedded variety $X_P$ in the affine chart, give all the equations.
  
  Suppose $h_j$ is an element of the Hilbert basis of $\mathbb{N}(P\cap M - v)$, and that the coordinate $z_j$ corresponds to $h_j$. Any relation is
  of the form
  \begin{equation}\label{relation:of_lattice_points}
    l h_j + l_1 h_{j_1} + \dots + l_m h_{j_m} = l_{m + 1} h_{j_{m + 1}} + \dots + l_n h_{j_n} \text{,}
  \end{equation}
  where $h_{j_i} \in P\cap M - v$, $h_{j_i}$ are mutually different, $h_{j_i} \neq h_j$, and $l \geq 0, l_i \geq 1$ for $i =1,\dots, n$ are positive
  integers. In this relation the left-hand side is not equal to $h_j$ as $h_j$ is not a sum. Let us look at the polynomial equation coming form this
  Relation \eqref{relation:of_lattice_points}. It is
  \begin{equation}\label{equation:of_toric_variety}
    z_j^{l} z_{j_1}^{l_1}\ccdots z_{j_m}^{l_m} = z_{j_{m+1}}^{l_{m+1}} \ccdots z_{j_n}^{l_n} \text{,}
  \end{equation}
  where the left-hand side is not equal to $z_j$, and the right-hand side does not contain $z_j$. If we differentiate this equation with respect to
  $z_j$ and substitute the point p (which has coordinates $(z_1,\dots,z_k) = (0,\dots,0))$, we get $0 = 0$. Therefore $z_j$ does not appear in the
  equation of the embedded tangent space at $p$ coming from Equation \eqref{equation:of_toric_variety}. Hence, the point $(z_1, z_2,\dots,z_k) =
  (0,\dots,0,1,0,\dots,0)$ (where the $1$ is on the $j$-th place) satisfies this equation of embedded tangent space at $p$. 

  Since the point $(0,\dots,0,1,0,\dots,0)$ is independent of the chosen Equation \eqref{equation:of_toric_variety}, we get that it satisfies all the
  equations of the projectivized tangent space at $p$.

  We come back the projective coordinates. We proved that every for every vector $h_j$ in the Hilbert basis of $\mathbb{N}(P \cap M - v)$ the point
  $[0,\dots,0,1,0,\dots,0]$ (where the $1$ is on the $j$-th place) is in the embedded tangent space. Also the point $p = [1,0,\dots,0]$ is in this
  space. As this projective space has dimension equal to the cardinality of the Hilbert basis (see \cite[Lemma 1.3.10]{cox_book}), we get the desired
  equality.
\end{proof}

\section{Apolarity}\label{section:apolarity}
Recall the definition of the apolarity action from Equation \eqref{equation:apolarity}. When $g\apolar F = 0$ we often say that $g$ is apolar to $F$.
The grading on $T$ is the same as on $S$:
$$\deg y_i \coloneqq [D_{\rho_i}]\text{.}$$
\begin{remark}
  Notice that $\apolar$ defined in Equation \eqref{equation:apolarity} could be seen as derivation, except that we do not multiply by a constant.  We
  only need to replace $y_i^b$ with $b!\cdot y_i^b$. This can be done by taking $T$ to be the ring of divided powers, see \cite[Appendix
  A]{iarrobino_kanev_book_Gorenstein_algebras}, or \cite[Chapter A2.4]{eisenbud} for a coordinate free version. For characteristic zero, this amounts
  to setting $y_i^{(b)} = \frac{y_i^b}{b!}$. But here we do not need $T$ to be a ring, we only need it to be a module. So we might as well write
  $y_i^b$ instead of $y_i^{(b)}$. It will not matter, provided we do not multiply $y_i^{b_1}$ by $y_i^{b_2}$. This will make some calculations easier.
\end{remark}
\begin{remark}
  Notice that when we take $g \in S_\alpha$ and $F \in T_\beta$, then $g\apolar F$ is homogeneous of degree $\alpha-\beta$ for any $\alpha$, $\beta
  \in \Cl X_\Sigma$.  That follows from the fact that when we multiply by subsequent $x_i$'s, the degree of $F$ decreases by $[D_{\rho_i}]$. This
  means that, although $T$ is not a graded $S$-module, it becomes a graded $S$-module if we define the grading by
  \begin{equation*}
    \deg y_i = -[D_{\rho_i}]\text{.}
  \end{equation*}
\end{remark}

Futhermore, if $F \in T$ is homogeneous, we will denote by $F^\perp$ its annihilator, which is a homogeneous ideal in that case.

Assume $X_\Sigma$ is a complete toric variety. Then we have $S_0 = T_0 = \mathbb{C}$ and $S_\alpha, T_\alpha$ are finite-dimensional vector spaces for
any $\alpha \in \Cl X_\Sigma$.
\begin{proposition}
  \label{proposition:duality}
  The map $S_{\alpha}\times T_{\alpha}\to T_0 = \mathbb{C}$ given by $(g, F) \mapsto g \apolar F$ makes the
  \begin{equation*}
    \{x_1^{a_1}\ccdots x_r^{a_r}|[a_1D_{\rho_1}+\ldots+a_rD_{\rho_r}]=\alpha\}
  \end{equation*}
  basis dual to
  \begin{equation*}
    \{y_1^{b_1}\ccdots y_r^{b_r}|[b_1D_{\rho_1}+\ldots+b_rD_{\rho_r}]=\alpha\}\text{.}
  \end{equation*}
  for any $\alpha \in \Cl X_\Sigma$. In particular, it is a duality for any $\alpha \in \Cl X_\Sigma$.
\end{proposition}
\begin{proof}
  We know that $x_1^{a_1} \ccdots x_r^{a_r} \apolar y_1^{a_1} \ccdots y_r^{a_r} = 1$. Consider the value of
  $x_1^{a_1} \ccdots x_r^{a_r} \apolar y_1^{b_1} \ccdots y_r^{b_r}$ when $(a_1,\dots,a_r) \neq (b_1,\dots,b_r)$. We know that
  \begin{equation}
    x_1^{a_1} \ccdots x_r^{a_r} \apolar y_1^{b_1} \ccdots y_r^{b_r} =
    \begin{cases}
      y_1^{b_1-a_1}\ccdots y_r^{b_r-a_r} & \text{if} \: b_i \geq a_i \: \text{for all} \: i \text{.} \\
      0 & \text{otherwise.}
    \end{cases}
    \label{equation:result_of_duality}
  \end{equation}
  We want to prove (\ref{equation:result_of_duality}) is zero, so suppose otherwise. The degree of (\ref{equation:result_of_duality}) is zero. But the
  only monomial whose degree is the trivial class is the constant monomial $1$ (we use that $X_\Sigma$ is complete). This implies that $b_i = a_i$ for
  all $i$. But this cannot be true, since we assumed $(a_1,\dots,a_r) \neq (b_1,\dots,b_r)$. This contradiction means that $x_1^{a_1}\ccdots x_r^{a_r}
  \apolar y_1^{b_1}\ccdots y_r^{b_r} = 0$, as desired.
\end{proof}

As a corollary, we see that $T = \bigoplus_{\alpha \in \Cl X_\Sigma}{H^0(X, \mathcal{O}(\alpha))^*}$.

Combining Proposition \ref{proposition:duality} and Corollary \ref{corollary:isomorphism}, we get
\begin{proposition}\label{proposition:formula}
  Let $X_\Sigma$ be a complete toric variety. Then for any $\alpha \in \Pic X_\Sigma$ such that $\mathcal{O}(\alpha)$ is basepoint free, the map
  \begin{equation*}
    \varphi \colon X_\Sigma \to \mathbb{P}(H^0(X_\Sigma, \mathcal{O}(\alpha))^*)
  \end{equation*}
  associated with the complete linear system $|\mathcal{O}(\alpha)|$ is given by
  \begin{equation}\label{equation:formula}
    \varphi([\lambda_1,\dots,\lambda_r]) = \left[\sum_{\substack{b_1,\ldots,b_r \in \mathbb{Z}_{\geq 0}| \\
    y_1^{b_1}\ccdots y_r^{b_r} \in T_\alpha}}\lambda_1^{b_1}\ccdots\lambda_r^{b_r}\cdot y_1^{b_1}\ccdots
    y_r^{b_r}\right]\text{.}
  \end{equation}
\end{proposition}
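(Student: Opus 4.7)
The plan is to trace the evaluation-at-$p$ functional through the duality of Proposition \ref{proposition:duality} and the comparison of sections with polynomials from Corollary \ref{corollary:isomorphism}. Recall that for a basepoint-free line bundle $\mathcal{O}(\alpha)$, the morphism $\varphi$ sends $p \in X_\Sigma$ to the class of the linear functional $s \mapsto s(p)$ on $H^0(X_\Sigma, \mathcal{O}(\alpha))$ (the individual values are only defined up to a trivialization-dependent common scalar, but the projective class is intrinsic).

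First I would fix the monomial basis of $S_\alpha \cong H^0(X_\Sigma, \mathcal{O}(\alpha))$ consisting of all $x_1^{b_1}\ccdots x_r^{b_r}$ of degree $\alpha$, and denote by $s_b$ the corresponding section of $\mathcal{O}(\alpha)$. By Proposition \ref{proposition:duality} the dual basis of $T_\alpha$ is exactly $\{y_1^{b_1}\ccdots y_r^{b_r}\}$ with the same exponents, so under the identification $T_\alpha \cong H^0(X_\Sigma, \mathcal{O}(\alpha))^*$ the evaluation functional at $p$ is expanded as $\sum_b s_b(p)\cdot y_1^{b_1}\ccdots y_r^{b_r}$. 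Second, Corollary \ref{corollary:isomorphism} says that for any lift $(\lambda_1,\dots,\lambda_r)$ of $p$ and any indices with $s_{b'}(p)\neq 0$, the ratios $s_b(p)/s_{b'}(p)$ equal $\lambda_1^{b_1}\ccdots\lambda_r^{b_r}/\lambda_1^{b'_1}\ccdots\lambda_r^{b'_r}$, so the projective class of $\sum_b s_b(p)\cdot y^b$ coincides with the class of $\sum_b \lambda^b \cdot y^b$. This is the claimed formula.

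The only subtle point, and the one deserving a little care, is trivialization. Each $s_b(p)$ individually depends on a choice of local trivialization of $\mathcal{O}(\alpha)$ near $p$; changing the trivialization rescales all $s_b(p)$ by a common nonzero constant (since all $s_b$ belong to the same $\mathcal{O}(\alpha)$), so the projective class is well-defined. Making this concrete using the trivializing monomial $g$ from the proof of Proposition \ref{proposition:isomorphism}, one checks that $g(\lambda)^{-1}\lambda^b$ is the common trivialized value and the factor $g(\lambda)$ drops out projectively. The basepoint-free hypothesis enters precisely here: it guarantees that some $s_b(p)$ (equivalently, by Proposition \ref{proposition:isomorphism}, some $\lambda^b$) is nonzero, so that the right-hand side of Equation \eqref{equation:formula} represents an honest point of $\mathbb{P}(T_\alpha)$ and Corollary \ref{corollary:isomorphism} can be invoked in the form used above.
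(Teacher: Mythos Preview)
Your argument is correct and follows essentially the same route as the paper: write $\varphi(p)=[\sum_i s_i(p)\cdot s^i]$ in a basis, use Proposition~\ref{proposition:duality} to identify the dual basis with the monomials $y^b$, and invoke Corollary~\ref{corollary:isomorphism} for the ratios $s_b(p)/s_{b'}(p)$. Your added remarks on trivialization and on where the basepoint-free hypothesis enters are a bit more explicit than the paper's treatment, but the underlying proof is the same.
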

\begin{proof}
  In general, if $\{s_i | i \in I \}$ is a basis of $H^0(X, \mathcal{O}(\alpha))$ ($I$ is some finite index set), and
  $\{s^i | i\in I \} \subseteq H^0(X, \mathcal{O}(\alpha))^*$ is the dual basis, then
  \begin{equation*}
    \varphi(p) = \left [\sum_{i\in I}s_i(p)\cdot s^i\right ]\text{,}
  \end{equation*}
  where $s_i(p)$ means evaluating section $s_i$ at point $p$. Note that it does not make sense to talk about the value of a section in $\mathbb{C}$,
  but the quotient $s_i(p)/s_j(p) \in \mathbb{C}$ makes sense, and the sum makes sense as a class in the projectivization of $H^0(X,
  \mathcal{O}(\alpha))^*$.

  By Proposition \ref{proposition:duality}, the monomials $y_1^{b_1}\dots y_r^{b_r} \in T_\alpha$ form a dual basis to $x_1^{b_1}\dots x_r^{b_r}$.  So
  from Corollary \ref{corollary:isomorphism} we know that for any $i = (b_1,\dots,b_r)$, $i' = (b_1',\dots,b_r')$ such that $s_{i'}(p)$ is non-zero we
  have
  \begin{equation*}
    \frac{s_i(p)}{s_{i'}(p)} = \frac{(x_1^{b_1}\ccdots x_r^{b_r})(p)}{(x_1^{b_1'}\ccdots x_r^{b_r'})(p)} = \frac{\lambda_1^{b_1}\ccdots
    \lambda_r^{b_r}}{\lambda_1^{b_1'}\ccdots \lambda_r^{b_r'}}\text{.}
  \end{equation*}
  The formula (\ref{equation:formula}) follows.
\end{proof}

\subsection{Hilbert function}\label{subsection:hilbert_function}
Let $\alpha \in \Pic X_\Sigma$ be a very ample class. Fix $F \in T_\alpha$. The ring $S/{F^\perp}$ is called the apolar ring of $F$. It is graded by
the class group of $X_\Sigma$. Let us denote it by $A_F$. Consider its Hilbert function $H : \Cl X_\Sigma \to \mathbb{Z}_{\geq 0}$ given by
\begin{equation*}
  \beta \mapsto dim_\mathbb{C}\left((A_F)_\beta\right)\text{.}
\end{equation*}
The Hilbert function is symmetric. The proof for projective space also applies to toric varieties:
\begin{proposition}\label{proposition:hilbert_function_symmetry}
  Let $X_\Sigma$ be a complete toric variety. Then for any $\beta \in \Cl X_\Sigma$:
  \begin{equation*}
    dim_{\mathbb{C}}(A_F)_\beta = dim_{\mathbb{C}}(A_F)_{\alpha-\beta}\text{.}
  \end{equation*}
\end{proposition}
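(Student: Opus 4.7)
The plan is to mimic the classical Macaulay-style proof of the symmetry of the Hilbert function of a graded Gorenstein algebra, translated into the multigraded apolarity setting built up so far. The only inputs needed are associativity of the $S$-module structure on $T$ and the perfect pairing from Proposition \ref{proposition:duality}.

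First I would introduce the bilinear pairing
\begin{equation*}
  \Phi_\beta \colon S_\beta \times S_{\alpha-\beta} \to S_0 = \mathbb{C}, \qquad (g_1, g_2) \mapsto (g_1 g_2) \apolar F.
\end{equation*}
This is well-defined because $g_1 g_2 \in S_\alpha$ and $F \in T_\alpha$, so $(g_1 g_2) \apolar F \in T_0 = \mathbb{C}$ (here I use that $X_\Sigma$ is proper so $T_0 = \mathbb{C}$, as noted right before Proposition \ref{proposition:duality}).

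Next I would identify the two one-sided kernels of $\Phi_\beta$. For the left kernel, $g_1 \in S_\beta$ is killed by $\Phi_\beta$ if and only if $(g_1 g_2)\apolar F = 0$ for every $g_2 \in S_{\alpha-\beta}$. By associativity of the $\apolar$-action this rewrites as $g_2 \apolar (g_1 \apolar F) = 0$ for every $g_2 \in S_{\alpha-\beta}$. Since $g_1\apolar F \in T_{\alpha-\beta}$ and the pairing $S_{\alpha-\beta}\times T_{\alpha-\beta}\to \mathbb{C}$ from Proposition \ref{proposition:duality} is perfect, this forces $g_1\apolar F = 0$, i.e.\ $g_1 \in (F^\perp)_\beta$. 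Conversely $F^\perp$ is an ideal, so any such $g_1$ is in the left kernel. Thus the left kernel equals $(F^\perp)_\beta$. The same argument with the roles of $\beta$ and $\alpha-\beta$ swapped shows that the right kernel equals $(F^\perp)_{\alpha-\beta}$.

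Finally I would use the fact that for any bilinear pairing between finite-dimensional vector spaces, $\dim V - \dim(\ker_{\text{left}})$ equals $\dim W - \dim(\ker_{\text{right}})$ (both compute the rank of the associated matrix). Applied to $\Phi_\beta$, this gives
\begin{equation*}
  \dim_{\mathbb{C}} S_\beta - \dim_{\mathbb{C}}(F^\perp)_\beta = \dim_{\mathbb{C}} S_{\alpha-\beta} - \dim_{\mathbb{C}}(F^\perp)_{\alpha-\beta},
\end{equation*}
which is exactly $\dim_{\mathbb{C}}(A_F)_\beta = \dim_{\mathbb{C}}(A_F)_{\alpha-\beta}$. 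The main (small) obstacle is the kernel identification step, where one must invoke the properness of $X_\Sigma$ twice: once to know $T_0 = \mathbb{C}$ so that $\Phi_\beta$ lands in a one-dimensional space, and once to apply the perfect pairing of Proposition \ref{proposition:duality} on both sides.
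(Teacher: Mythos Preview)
Your proof is correct and is essentially the same as the paper's: the paper also sets up the pairing $(g,h)\mapsto (gh)\apolar F$ and uses Proposition~\ref{proposition:duality} together with associativity to show it is nondegenerate modulo $F^\perp$. The only cosmetic difference is that the paper phrases it as exhibiting a duality on $(A_F)_\beta\times(A_F)_{\alpha-\beta}$ directly, whereas you compute the left and right kernels on $S_\beta\times S_{\alpha-\beta}$ and then pass to the quotient.
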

\begin{proof}
  We will prove that the bilinear map $(A_F)_\beta \times (A_F)_{\alpha-\beta} \to \mathbb{C} \cong (A_F)_0$ given by $(g, h) \mapsto (g\cdot
  h)\apolar F$ is a duality. Take any $g \in S_\beta$ such that $g \apolar F \neq 0$. Then there is $h \in S_{\alpha-\beta}$ such that $h \apolar
  (g\apolar F) \neq 0$ (because $\apolar$ makes $S_{\alpha-\beta}$ and $T_{\alpha-\beta}$ dual by Proposition \ref{proposition:duality}). But this
  means that $(h\cdot g) \apolar F \neq 0$. We have proven that multiplying by any non-zero $g \in (A_F)_\beta$ is non-zero as a map
  $(A_F)_{\alpha-\beta} \to \mathbb{C}$. Similarly, multiplying by any non-zero $h \in (A_F)_{\alpha-\beta}$ is non-zero as a map $(A_F)_{\beta} \to
  \mathbb{C}$. We are done. 
\end{proof}
\begin{remark}\label{remark:catalecticant}
  The values of the Hilbert function of $S/{F^\perp}$ are the same as the ranks of the catalecticant homomorphisms. More precisely, let
  \begin{equation*}
    C_F^\beta : S_\beta \to T_{\alpha-\beta}
  \end{equation*}
  be given by
  \begin{equation*}
    g \mapsto g \apolar F\text{.}
  \end{equation*}
  This map is called the catalecticant homomorphism. We have
  \begin{equation*}
    \rank{C_F^\beta} = \dim_\mathbb{C}(A_F)_\beta\text{.}
  \end{equation*}
  This is because the graded piece of $F^\perp$ of degree $\beta$ is the kernel of $C_F^\beta$. For more on catalecticant homomorphisms, see
  \cite[Section 2]{teitler_geometric_lower_bounds} or \cite[Chapter 1]{iarrobino_kanev_book_Gorenstein_algebras}.
\end{remark}
\subsection{Apolarity Lemma}\label{section:apolarity_lemma}

Suppose $X$ is a projective variety over $\mathbb{C}$. Let $\mathcal{L}$ be a very ample line bundle on $X$, and $\varphi \colon X \to \mathbb{P}(H^0(X,
\mathcal{L})^*)$ the associated morphism. For a closed subscheme $i\colon R \hookrightarrow X$, $\langle R \rangle$ denotes its linear span in
$\mathbb{P}(H^0(X, \mathcal{L})^*)$, and $\mathcal{I}_R$ denotes its ideal sheaf on $X$. Recall that for any line bundle on $X$, the vector subspace
$H^0(X, \mathcal{I}_R \otimes \mathcal{L}) \subseteq H^0(X, \mathcal{L})$ consists of the sections which pull back to zero on $R$.

Let $(\cdot \apolar \cdot): H^0(X, \mathcal{L}) \otimes H^0(X, \mathcal{L})^* \to \mathbb{C}$ denote the natural pairing (this agrees with the
notation introduced in Equation \eqref{equation:apolarity}). Now we are ready to formulate the Apolarity Lemma:
\begin{proposition}[Apolarity Lemma, general version]
  \label{proposition:general_apolarity}
  Let $F \in H^0(X, \mathcal{L})^*$ be a non-zero element. Then for any closed subscheme $i : R \hookrightarrow X$ we have
  \begin{equation*}
    F \in \langle R \rangle \iff H^0(X, \mathcal{I}_R \otimes \mathcal{L}) \apolar F = 0\text{.}
  \end{equation*}
\end{proposition}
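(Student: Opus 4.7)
The plan is to identify $\langle R\rangle$ with the zero locus in $\mathbb{P}(H^0(X,\mathcal{L})^*)$ of the linear forms on $\mathbb{P}(V^*)$ (where $V := H^0(X,\mathcal{L})$) that vanish scheme-theoretically on $R$, namely those in $H^0(X,\mathcal{I}_R\otimes \mathcal{L})$. Under the closed embedding $\varphi\colon X\hookrightarrow \mathbb{P}(V^*)$ coming from the very ample $\mathcal{L}$, the space $V$ is naturally identified with the global linear forms on $\mathbb{P}(V^*)$, and the pairing $s\apolar F$ is precisely the evaluation of a linear form on a vector.

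I would first introduce the linear subspace
\begin{equation*}
L_R \;=\; \{[F]\in\mathbb{P}(V^*) : s\apolar F = 0\text{ for all }s\in H^0(X,\mathcal{I}_R\otimes\mathcal{L})\}
\end{equation*}
and then argue $L_R = \langle R\rangle$ in two steps. Step (i): the morphism $i\colon R\hookrightarrow\mathbb{P}(V^*)$ factors through $L_R$. Indeed, each defining linear form $s$ of $L_R$ lies in $H^0(X,\mathcal{I}_R\otimes\mathcal{L})$, so its pullback $i^*s\in H^0(R,i^*\mathcal{L})$ is the zero section; equivalently the ideal sheaf of $L_R$ in $\mathbb{P}(V^*)$ maps to zero in $\mathcal{O}_R$, which is exactly the condition for the inclusion of $R$ to factor through $L_R$ as a closed subscheme. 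Step (ii): $L_R$ is minimal. Any projective linear subspace $L\subseteq\mathbb{P}(V^*)$ is cut out by some subspace $J\subseteq V$ of linear forms; if $R\subseteq L$ scheme-theoretically, then each $s\in J$ pulls back to zero on $R$, hence $J\subseteq H^0(X,\mathcal{I}_R\otimes\mathcal{L})$ and so $L_R\subseteq L$.

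The statement $F\in\langle R\rangle \iff H^0(X,\mathcal{I}_R\otimes\mathcal{L})\apolar F = 0$ is then a direct reading of the definition of $L_R$. The main point that needs care is step (i): the vanishing of a section "on $R$" must be taken scheme-theoretically (i.e.\ the pullback $i^*s$ is the zero section), not merely pointwise on the support of $R$. This is built into the definition of $H^0(X,\mathcal{I}_R\otimes\mathcal{L})$ as the kernel of the restriction $H^0(X,\mathcal{L})\to H^0(R,i^*\mathcal{L})$, and it is exactly what allows the Apolarity Lemma to control cactus rank for non-reduced $R$, not just rank. Everything else is linear algebra plus the standard correspondence between closed linear subspaces of $\mathbb{P}(V^*)$ and linear subspaces of $V$.
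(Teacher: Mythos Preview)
Your proposal is correct and is essentially the same argument as the paper's: both identify $\langle R\rangle$ with the intersection of all hyperplanes $H_s\subseteq\mathbb{P}(V^*)$ for $s\in H^0(X,\mathcal{I}_R\otimes\mathcal{L})$, using that $H_s\supseteq R$ scheme-theoretically iff $i^*s=0$. The paper phrases this as a chain of equivalences starting from ``$F\in\langle R\rangle$ iff every hyperplane through $\langle R\rangle$ contains $F$'', whereas you package the same content as ``$L_R=\langle R\rangle$'' via a containment-plus-minimality argument; these are two presentations of the same linear-algebra/duality fact.
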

\begin{proof}
  Take any $s \in H^0(X, \mathcal{L})$, let $H_s$ be the corresponding hyperplane in $H^0(X, \mathcal{L})^*$. Then,
  \begin{equation*}
    \langle R \rangle \subseteq H_s \iff i^*(s) = 0 \iff s \in H^0(X, \mathcal{I}_R \otimes \mathcal{L}) \text{.}
  \end{equation*}
  Below we identify sections $s \in H^0(X, \mathcal{L})$ with hyperplanes $H_s$ in $H^0(X, \mathcal{L})^*$. Then for any $R$
  \begin{align*}
    F \in \langle R \rangle & \iff \forall_{s\in H^0(X, \mathcal{L})}(\langle R \rangle \subseteq H_s \implies F \in H_s) \\
    & \iff \forall_{s\in H^0(X, \mathcal{L})}(s \in H^0(X, \mathcal{I}_R \otimes \mathcal{L}) \implies F \in H_s) \\
    & \iff \forall_{s\in H^0(X, \mathcal{L})}(s \in H^0(X, \mathcal{I}_R \otimes \mathcal{L}) \implies s \apolar F = 0) \\
    & \iff H^0(X, \mathcal{I}_R \otimes \mathcal{L}) \apolar F = 0 \text{.}
  \end{align*}
\end{proof}

\begin{proof}[Proof of Theorem \ref{theorem:multigraded_apolarity}]
  From Proposition \ref{proposition:general_apolarity} we know that $F \in \langle R \rangle$ if and only if $I(R)_\alpha \subseteq F^\perp_\alpha$. It
  remains to prove that $I(R)_\alpha \subseteq F^\perp_\alpha$ implies $I(R) \subseteq F^\perp$. Suppose $I(R)_\alpha \subseteq F^\perp_\alpha$. Take
  any $g \in I(R)_\beta$ for some $\beta \in \Cl X_\Sigma$. We want to show that $g \apolar F = 0$. We have $S_{\alpha-\beta}\cdot g \subseteq
  I_\alpha$, because $g$ is in the ideal. This means that $(S_{\alpha-\beta} \cdot g) \apolar F = 0$, i.e.\ $S_{\alpha-\beta}\apolar(g\apolar F) = 0$.
  Now, $g \apolar F$ is an element of $T_{\alpha-\beta}$, which is zero when multiplied by anything from $S_{\alpha-\beta}$, which is equal to
  $T_{\alpha-\beta}^*$ by Proposition \ref{proposition:duality}. It follows that $g \apolar F$ is zero.
\end{proof}
\begin{remark}\label{remark:agreeing_ideals}
  By Proposition \ref{proposition:ideal}, we might have taken $\sat{I(R)}$ instead of $I(R)$ in Theorem \ref{theorem:multigraded_apolarity}. By
  \cite[Theorem 3.7]{cox_homogeneous}, we might have taken any $B$-saturated ideal defining $R$.
\end{remark}

\section{Catalecticant bounds}\label{section:catalecticant_bounds}
We prove lower bounds for various kinds of rank (called catalecticant bounds). They help us to calculate these ranks in Section
\ref{section:examples}. See \cite[Section 2]{teitler_geometric_lower_bounds} for a different viewpoint on these types of lower bounds in the cases of
the Veronese variety, Segre-Veronese variety and general varieties. 
\begin{proposition}\label{proposition:zero_dimensional}
  Let $X$ be a complete variety and $R$ be a zero-dimensional subscheme of $X$ with ideal sheaf $\mathcal{I}_R$.  Then for any line bundle $\mathcal{L}$
  \begin{equation*}
    \length{R} \geq h^0(X, \mathcal{L}) - h^0(X, \mathcal{I}_R \otimes \mathcal{L})\text{.}
  \end{equation*}
  Recall that for a zero-dimensional scheme $\length{R}$ is $\dim_{\mathbb{C}}H^0(R, \mathcal{O}_R)$.
\end{proposition}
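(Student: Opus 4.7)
The plan is to use the short exact sequence of sheaves on $X$ associated with the closed immersion $R \hookrightarrow X$ and tensor it with $\mathcal{L}$. Since $\mathcal{L}$ is locally free, tensoring preserves exactness, giving
\begin{equation*}
  0 \to \mathcal{I}_R \otimes \mathcal{L} \to \mathcal{L} \to \mathcal{L}|_R \to 0\text{.}
\end{equation*}
Taking global sections yields a left-exact sequence
\begin{equation*}
  0 \to H^0(X, \mathcal{I}_R \otimes \mathcal{L}) \to H^0(X, \mathcal{L}) \to H^0(R, \mathcal{L}|_R)\text{,}
\end{equation*}
so that $h^0(X, \mathcal{L}) - h^0(X, \mathcal{I}_R \otimes \mathcal{L}) \leq h^0(R, \mathcal{L}|_R)$.

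It remains to identify $h^0(R, \mathcal{L}|_R)$ with $\length R$. Since $R$ is zero-dimensional, it is a finite disjoint union of spectra of Artinian local $\mathbb{C}$-algebras. On each such component the restriction $\mathcal{L}|_R$ is a locally free module of rank one over a local ring, hence free; globally we get $\mathcal{L}|_R \cong \mathcal{O}_R$. Therefore
\begin{equation*}
  h^0(R, \mathcal{L}|_R) = h^0(R, \mathcal{O}_R) = \dim_{\mathbb{C}} H^0(R, \mathcal{O}_R) = \length R\text{,}
\end{equation*}
by definition of length, and combining the two inequalities finishes the proof.

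There is essentially no obstacle here: the only mild subtlety is justifying $\mathcal{L}|_R \cong \mathcal{O}_R$, which reduces to the elementary fact that finitely generated projective modules over an Artinian local ring are free. The properness hypothesis on $X$ is not used directly in this argument, though it is what makes the two $h^0$ terms on $X$ finite so that the subtraction is meaningful.
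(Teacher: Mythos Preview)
Your proof is correct and follows essentially the same approach as the paper: tensor the ideal-sheaf short exact sequence by $\mathcal{L}$, take global sections, and then use that a line bundle on a zero-dimensional scheme is trivial to identify $h^0(R,\mathcal{L}|_R)$ with $\length R$. Your added justification for $\mathcal{L}|_R \cong \mathcal{O}_R$ and the remark on the role of properness are welcome elaborations, but the argument is otherwise the same.
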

\begin{proof}
  We have an exact sequence
  \begin{equation*}
    0 \to \mathcal{I}_R \to \mathcal{O}_X \to \mathcal{O}_R \to 0\text{.}
  \end{equation*}
  We tensor it with $\mathcal{L}$:
  \begin{equation*}
    0 \to \mathcal{I}_R\otimes \mathcal{L} \to \mathcal{L} \to \mathcal{L}_{|R} \to 0\text{.}
  \end{equation*}
  After taking global sections (which are left-exact), we get an exact sequence
  \begin{equation*}
    0 \to H^0(X, \mathcal{I}_R\otimes \mathcal{L}) \to H^0(X, \mathcal{L}) \to H^0(R, \mathcal{L}_{|R})\text{.}
  \end{equation*}
  It follows that 
  \begin{equation*}
    h^0(R, \mathcal{L}_{|R}) \geq h^0(X,\mathcal{L}) - h^0(X, \mathcal{I}_R \otimes \mathcal{L})\text{.}
  \end{equation*}
  But on a zero-dimensional scheme, every line bundle trivializes. This means $h^0(R, \mathcal{L}_{|R}) = h^0(R, \mathcal{O}_R)$, which is the length
  of $R$.
\end{proof}

Let $X_\Sigma$ be a projective simplicial toric variety. Let us fix a very ample class $\alpha \in \Pic X_\Sigma$.  Suppose $\beta \in \Cl X_\Sigma$.
The linear map $\apolar \colon S_{\beta}\otimes T_{\alpha} \to T_{\alpha-\beta}$ can be seen as coming from the morphism
\begin{equation*}
  \mathcal{O}(\beta)\otimes \mathcal{O}(\alpha - \beta) \to \mathcal{O}(\alpha)
\end{equation*}
by taking multiplication of global sections:
\begin{equation*}
  H^0(X_\Sigma, \mathcal{O}(\beta)) \otimes H^0(X_\Sigma, \mathcal{O}(\alpha - \beta)) \to H^0(X_\Sigma, \mathcal{O}(\alpha))
\end{equation*}
and rearranging the terms:
\begin{equation*}
  H^0(X_\Sigma, \mathcal{O}(\beta))\otimes H^0(X_\Sigma, \mathcal{O}(\alpha))^* \xra{\apolar} H^0(X_\Sigma, \mathcal{O}(\alpha -\beta))^*\text{.}
\end{equation*}
For any $\gamma \in \Cl X_\Sigma$ the space $H^0(X_\Sigma, \mathcal{O}(\gamma))$ is $S_\gamma$ and we identify $T_\gamma$ with
$H^0(X_\Sigma, \mathcal{O}(\gamma))^*$ by Proposition \ref{proposition:duality}. Notice that if we fix $F \in H^0(X_\Sigma, \mathcal{O}(\alpha))^*$,
then the map above becomes the catalecticant homomorphism 
\begin{equation*}
  C_F^\beta : H^0(X_\Sigma, \mathcal{O}(\beta)) \to H^0(X_\Sigma, \mathcal{O}(\alpha-\beta))^*
\end{equation*}
from Remark \ref{remark:catalecticant}.
% Question 1. Is this really the same? In particular, does not our definition of \apolar mess things up?
% Question 2. Can we replace by \beta \in \Cl here (the only thing different is that the map is not iso)

As a corollary of Proposition \ref{proposition:zero_dimensional} and the Apolarity Lemma (Theorem \ref{theorem:multigraded_apolarity}), we get the
catalecticant bound in the special case of line bundles.
\begin{corollary}[Catalecticant bound for cactus rank]\label{corollary:cactus_catalecticant_bound}
  For any $\beta \in \Pic X_\Sigma$, and any $F \in H^0(X_\Sigma, \mathcal{O}(\alpha))^*$ we have
  \begin{equation*}
     \crr(F) \geq \rank{C_F^\beta}\text{.}
  \end{equation*}
\end{corollary}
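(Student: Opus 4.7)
The plan is to combine the Apolarity Lemma (Proposition \ref{proposition:toric_apolarity}) with the length bound (Proposition \ref{proposition:zero_dimensional}) via the identification of the catalecticant map with a graded piece of the multiplication action. Fix $\beta \in \Pic X_\Sigma$ and let $R \hookrightarrow X_\Sigma$ be any zero-dimensional subscheme witnessing the cactus rank, i.e.\ $F \in \langle R \rangle$ and $\length R = \crr(F)$. It suffices to show $\rank C_F^\beta \leq \length R$ for this $R$.

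First, by the toric Apolarity Lemma, $F \in \langle R \rangle$ gives $I(R) \subseteq F^\perp$. Restricting to degree $\beta$ yields
\begin{equation*}
  I(R)_\beta \subseteq (F^\perp)_\beta = \ker C_F^\beta,
\end{equation*}
where the last equality is Remark \ref{remark:catalecticant} (the graded piece of the annihilator in degree $\beta$ is exactly the kernel of the catalecticant). Next, because $\beta \in \Pic X_\Sigma$ is the class of a Cartier divisor, Definition \ref{definition:ideal_of_subscheme} identifies $I(R)_\beta$ with $H^0(X_\Sigma, \mathcal{I}_R \otimes \mathcal{O}(\beta))$, and Proposition \ref{proposition:isomorphism} identifies $S_\beta = H^0(X_\Sigma, \mathcal{O}(\beta))$. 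Therefore
\begin{equation*}
  \dim \ker C_F^\beta \geq h^0(X_\Sigma, \mathcal{I}_R \otimes \mathcal{O}(\beta)).
\end{equation*}

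Taking the rank-nullity equation for $C_F^\beta$ and then invoking Proposition \ref{proposition:zero_dimensional} with $\mathcal{L} = \mathcal{O}(\beta)$ yields
\begin{equation*}
  \rank C_F^\beta = h^0(X_\Sigma, \mathcal{O}(\beta)) - \dim \ker C_F^\beta \leq h^0(X_\Sigma, \mathcal{O}(\beta)) - h^0(X_\Sigma, \mathcal{I}_R \otimes \mathcal{O}(\beta)) \leq \length R.
\end{equation*}
Since $\length R = \crr(F)$, the bound $\crr(F) \geq \rank C_F^\beta$ follows.

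There is no serious obstacle; the only point that needs care is the identification $I(R)_\beta = H^0(X_\Sigma, \mathcal{I}_R \otimes \mathcal{O}(\beta))$, which is precisely why the hypothesis $\beta \in \Pic X_\Sigma$ (rather than merely $\beta \in \Cl X_\Sigma$) is required: for non-Cartier classes the ideal $I(R)$ may fail to capture all sections of $\mathcal{I}_R \otimes \mathcal{O}(\beta)$, and the clean comparison with the catalecticant kernel breaks down. This is exactly the restriction that will be relaxed (at the cost of passing from cactus to border rank) in the companion Corollary \ref{corollary:border_catalecticant_bound}.
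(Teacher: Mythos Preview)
Your proof is correct and follows essentially the same route as the paper's: both combine the Apolarity Lemma (Proposition \ref{proposition:toric_apolarity}) with the length bound (Proposition \ref{proposition:zero_dimensional}) via the identification $\rank C_F^\beta = \dim_{\mathbb{C}}(S/F^\perp)_\beta$, the only cosmetic difference being that the paper phrases the chain of inequalities in terms of quotient dimensions $\dim(S/I)_\beta \geq \dim(S/F^\perp)_\beta$ while you phrase it via rank--nullity on the kernel side.
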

\begin{proof}
  Take any zero-dimensional scheme $R\hookrightarrow X_\Sigma$ such that $F \in \langle R \rangle$. Let $I$ be any $B$-saturated ideal defining $R$.
  We have
  \begin{multline*}
    \length R \geq h^0(X_\Sigma, \mathcal{O}(\beta)) - h^0(X_\Sigma, \mathcal{I}_R \otimes \mathcal{O}(\beta)) = \dim_{\mathbb{C}}(S/I)_\beta \\ 
    \geq \dim_{\mathbb{C}}(S/F^\perp)_\beta  = \dim_{\mathbb{C}} \im C_F^\beta \text{,}
  \end{multline*}
  where the first inequality follows from Proposition \ref{proposition:zero_dimensional}, and the second from Theorem
  \ref{theorem:multigraded_apolarity}. We also used that $I(R)$ agrees with any saturated ideal defining $R$ in degrees coming from $\Pic X_\Sigma$,
  see Remark \ref{remark:agreeing_ideals}, and the fact that values of the Hilbert function are ranks of catalecticant homomorphisms (Remark
  \ref{remark:catalecticant}).
\end{proof}
The bound for cactus rank does not hold for classes $\beta \notin \Pic X_\Sigma$. See Subsection \ref{subsection:weighted_projective_plane} for an
example. But the bound does hold for rank and $\beta \in \Cl X_\Sigma$:
\begin{proposition}[Catalecticant bound for rank]\label{proposition:rank_catalecticant_bound}
  For any $\beta \in \Cl X_\Sigma$, and any $F \in H^0(X_\Sigma, \mathcal{O}(\alpha)^*)$ we have
  \begin{equation*}
    \rr(F) \geq \rank{C_F^\beta}\text{.}
  \end{equation*}
\end{proposition}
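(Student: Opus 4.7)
The plan is to reduce to a single-point calculation using linearity of the catalecticant and then to invoke the explicit point-evaluation formula from Proposition \ref{proposition:formula}. First I would unpack the definition of rank: if $\rr(F) = r$, then $F \in \langle \varphi(p_1),\dots,\varphi(p_r)\rangle$ for some closed points $p_1,\dots,p_r \in X_\Sigma$, so we may write $F = \sum_{i=1}^r c_i v_i$ with $c_i \in \mathbb{C}$ and $v_i \coloneqq \varphi(p_i) \in T_\alpha$. Since $F \mapsto C_F^\beta$ is $\mathbb{C}$-linear, $\im C_F^\beta$ is contained in $\sum_{i=1}^r \im C_{v_i}^\beta$, giving $\rank C_F^\beta \leq \sum_{i=1}^r \rank C_{v_i}^\beta$. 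Therefore it suffices to show that $\rank C_{v_i}^\beta \leq 1$ for every $i$.

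Next I would carry out the single-point calculation in the Cox ring. Choosing a representative $\lambda^{(i)} \in \mathbb{C}^{\Sigma(1)} \setminus Z$ of $p_i$, Proposition \ref{proposition:formula} identifies $v_i$ with $\sum_{y^b \in T_\alpha}(\lambda^{(i)})^b y^b$. A direct unwinding of the apolarity action shows that for any monomial $x^a \in S_\beta$ one has $x^a \apolar v_i = (\lambda^{(i)})^a v_i'$, where $v_i' \in T_{\alpha-\beta}$ is the analogous point-evaluation functional in degree $\alpha - \beta$ (the key combinatorial input is the bijection $b \leftrightarrow b - a$ between exponents of monomials in $T_\alpha$ with $b \geq a$ componentwise and exponents of monomials in $T_{\alpha-\beta}$). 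Hence $g \apolar v_i$ is a scalar multiple of the fixed element $v_i'$ for every $g \in S_\beta$, so $\im C_{v_i}^\beta \subseteq \mathbb{C}\cdot v_i'$ and $\rank C_{v_i}^\beta \leq 1$. Combining the two steps yields $\rank C_F^\beta \leq r = \rr(F)$, as required.

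The main obstacle, compared to Corollary \ref{corollary:cactus_catalecticant_bound}, is that when $\beta \in \Cl X_\Sigma \setminus \Pic X_\Sigma$ the sheaf $\mathcal{O}(\beta)$ is only a rank-one reflexive sheaf, and the sheaf-theoretic exact-sequence argument behind Proposition \ref{proposition:zero_dimensional} can fail at singular points. The Cox-ring strategy above sidesteps this entirely, relying only on the point-evaluation formula, which is available for arbitrary $\beta \in \Cl X_\Sigma$. The trade-off is that the argument genuinely uses that the $r$ summands come from honest closed points (so their catalecticants have rank one), which is precisely why the analogous bound must fail for cactus rank when $\beta \notin \Pic X_\Sigma$.
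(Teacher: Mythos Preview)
Your proposal is correct and follows essentially the same route as the paper. The paper's proof introduces the notation $\psi_\gamma(\lambda)=\sum_{y^b\in T_\gamma}\lambda^b y^b$, proves the identity $g\apolar\psi_\alpha(\lambda)=g(\lambda)\,\psi_{\alpha-\beta}(\lambda)$ via the same exponent bijection $b\leftrightarrow b-a$ you describe, and then concludes that $\im C_F^\beta\subseteq\langle\psi_{\alpha-\beta}(\lambda^{(1)}),\dots,\psi_{\alpha-\beta}(\lambda^{(r)})\rangle$; your $v_i$ and $v_i'$ are exactly $\psi_\alpha(\lambda^{(i)})$ and $\psi_{\alpha-\beta}(\lambda^{(i)})$, and your single-point calculation is the same identity.
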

The following proof is an adaptation of \cite[the ``suprisingly quick proof'' after equation (8)]{teitler_geometric_lower_bounds}.
\begin{proof}
  For any $\gamma \in \Cl X_\Sigma$ and any $(\lambda_1,\dots,\lambda_r) \in \mathbb{C}^r$, define a polynomial in $y_1,\dots,y_r$
  \begin{equation*}
    \psi_\gamma(\lambda_1,\dots,\lambda_r) = \sum_{\substack{ a_1,\dots,a_r \in \mathbb{Z}_{\geq 0}| \\ y_1^{a_1}\ccdots y_r^{a_r} \in
    T_\gamma}}{\lambda_1^{a_1}\ccdots \lambda_r^{a_r}\cdot y_1^{a_1}\ccdots y_r^{a_r}}\text{.}
  \end{equation*}
  First we prove the formula
  \begin{equation*}
    g \apolar \psi_\alpha(\lambda_1,\dots,\lambda_r) = g(\lambda_1,\dots,\lambda_r) \psi_{\alpha-\beta}(\lambda_1,\dots,\lambda_r)
  \end{equation*}
  for any $g \in S_{\beta}$ (here $g(\lambda_1,\dots,\lambda_r)$ means evaluating the polynomial $g$ at the $\lambda_i$'s). The formula is linear in
  $g$, so we may assume $g = x_1^{b_1}\ccdots x_r^{b_r}$.

  Let $P$ be the set of all monomials $y_1^{a_1}\ccdots y_r^{a_r}$ of degree $\alpha$ such that $g \apolar y_1^{a_1}\ccdots y_r^{a_r} \neq 0$ (i.e.\
  $a_i \geq b_i$ for all $i$). Then the map $g \apolar \cdot$ is a bijection from $P$ onto the set of all monomials of degree $\alpha-\beta$ in
  variables $y_1,\dots,y_r$ (injectivity is clear; for surjectivity note that for any $y_1^{a_1'}\ccdots y_r^{a_r'} \in T_{\alpha-\beta}$ the monomial
  $y_1^{a_1' + b_1}\ccdots y_r^{a_r' +b_r} \in T_\alpha$ is what we are looking for). It follows that
  \begin{align*}
    g \apolar \psi_\alpha(\lambda_1,\dots,\lambda_r) &= \sum_{\substack{ a'_1,\dots,a'_r \in \mathbb{Z}_{\geq 0}| \\ y_1^{a'_1}\ccdots y_r^{a'_r}
    \in T_{\alpha-\beta}}}\lambda_1^{a'_1 + b_1}\ccdots \lambda_r^{a'_r + b_r} y_1^{a'_1}\ccdots y_r^{a'_r}\\ &= \lambda_1^{b_1}\ccdots
    \lambda_r^{b_r}\cdot \sum_{\substack{ a'_1,\dots,a'_r \in \mathbb{Z}_{\geq 0}| \\ y_1^{a'_1}\ccdots y_r^{a'_r} \in
    T_{\alpha-\beta}}}\lambda_1^{a'_1}\ccdots \lambda_r^{a'_r} y_1^{a'_1}\ccdots y_r^{a'_r}\\ &=
    g(\lambda_1,\dots,\lambda_r)\psi_{\alpha-\beta}(\lambda_1,\dots,\lambda_r)\text{.}
  \end{align*}
  
  Now we proceed to the proof of the catalecticant bound. Take $F \in H^0(X, \mathcal{O}(\alpha))^*$. Then $\rr(F)$ is the least $l$ such that $F =
  \psi_\alpha(\mb{\lambda}^1) + \dots + \psi_\alpha(\mb{\lambda}^l)$ for some $\mb{\lambda}^1,\dots,\mb{\lambda}^l \in \mathbb{C}^r$ (basically
  because $\psi_\alpha$ agrees with $\varphi_{|\mathcal{O}(\alpha)|}$ from Proposition \ref{proposition:formula}). We want to bound from above the
  dimension of the image of the map $C^\beta_F : S_\beta \to T_{\alpha-\beta}$, $g \mapsto g \apolar F$. But for any $g \in S_\beta$ we have
  \begin{equation*} g
    \apolar F = g \apolar (\psi_\alpha(\mb{\lambda}^1) + \dots + \psi_\alpha(\mb{\lambda}^l)) = g(\mb{\lambda}^1)\cdot
    \psi_{\alpha-\beta}(\mb{\lambda}^1)+\dots+g(\mb{\lambda}^l)\cdot \psi_{\alpha-\beta}(\mb{\lambda}^l)\text{.}
  \end{equation*}
  So for any $g$ in the domain of the map, the image $C_F^\beta(g)$ is in 
  \begin{equation*}
    \langle \psi_{\alpha-\beta}(\mb{\lambda}^1), \dots, \psi_{\alpha-\beta}(\mb{\lambda}^l) \rangle \text{.}
  \end{equation*}
  It follows that the rank of $C_F^\beta$ is at most $l$. 
\end{proof}

\begin{proposition}\label{proposition:closed_set}
  Fix $\beta \in \Cl X_\Sigma$. Then for any $l \in \mathbb{Z}_{+}$ the set of points $F \in \mathbb{P}(H^0(X_\Sigma,\mathcal{O}(\alpha))^*)$ such that
  $\rank(C_F^\beta) \leq l$ is Zariski-closed.
\end{proposition}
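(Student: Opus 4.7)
The plan is to exhibit the locus $\{F : \rank(C_F^\beta) \leq l\}$ as the zero set of polynomials in the homogeneous coordinates of $F$, which makes it Zariski-closed by definition.

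First I would fix bases: choose the monomial bases of $S_\beta$, $T_\alpha$, and $T_{\alpha-\beta}$ consisting of those monomials of the appropriate degrees (finite sets by properness of $X_\Sigma$). Using the duality of Proposition \ref{proposition:duality}, the monomials in $T_\alpha$ form a basis, and writing $F = \sum_i F_i \cdot m_i$ in this basis gives homogeneous coordinates $(F_i)$ on $\mathbb{P}(H^0(X_\Sigma, \mathcal{O}(\alpha))^*)$. Next I would observe that $C_F^\beta(g) = g \apolar F$ is bilinear in $(g, F)$, because $\apolar$ is bilinear. Hence, if $M(F)$ denotes the matrix of $C_F^\beta$ with respect to the chosen bases, each entry of $M(F)$ is a linear form in the coordinates $F_i$ (in fact, essentially a single coordinate of $F$, up to sign/index by the duality formula).

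Then I would apply the standard fact that $\rank(M(F)) \leq l$ if and only if all $(l+1) \times (l+1)$ minors of $M(F)$ vanish. Each such minor is a homogeneous polynomial of degree $l+1$ in the $F_i$. The common vanishing locus of a collection of homogeneous polynomials is Zariski-closed in $\mathbb{P}(H^0(X_\Sigma, \mathcal{O}(\alpha))^*)$, so we are done.

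There is no real obstacle here; the only thing to be careful about is ensuring the entries of the matrix depend linearly (hence algebraically) on $F$, which follows immediately from bilinearity of $\apolar$ on $S_\beta \times T_\alpha$. The rest is the classical determinantal-variety argument.
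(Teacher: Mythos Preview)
Your proof is correct and is essentially the same as the paper's: both pick bases of the source and target of $C_F^\beta$, observe that the resulting matrix has entries that are linear in $F$ (equivalently, lie in $H^0(X_\Sigma,\mathcal{O}(\alpha)) = T_\alpha^*$), and conclude by taking $(l+1)\times(l+1)$ minors. The only cosmetic difference is that the paper phrases the entries as elements of $H^0(X_\Sigma,\mathcal{O}(\alpha))$ to be evaluated at $F$, while you write them as linear forms in the coordinates $F_i$; these are the same thing.
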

\begin{proof}
  Pick a basis of $H^0(X_\Sigma, \mathcal{O}(\beta))$ and a basis of $H^0(X_\Sigma, \mathcal{O}(\alpha - \beta))$. Then $\apolar$ becomes a matrix
  with entries in $H^0(X_\Sigma, \mathcal{O}(\alpha))$. In order to get the rank of the map $C_F^\beta = \cdot \apolar F$, we evaluate the matrix at
  $F \in H^0(X_\Sigma, \mathcal{O}(\alpha))^*$. Hence the set of those $F$'s such that the rank of $\cdot \apolar F$ is at most $l$ is given by the
  vanishing of the $(l+1)$-th minors of the matrix. These minors are polynomials from $\Sym^\bullet H^0(X_\Sigma, \mathcal{O}(\alpha))$. We are done.
\end{proof}
\begin{corollary}[Catalecticant bound for border rank]\label{corollary:border_catalecticant_bound}
  For any $\beta \in \Cl X_\Sigma$ and any $F \in T_\alpha$ we have
  \begin{equation*}
    \brr(F) \geq \rank C_F^\beta\text{.}
  \end{equation*}
\end{corollary}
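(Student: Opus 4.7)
The plan is to combine the two ingredients immediately preceding the statement: Proposition \ref{proposition:rank_catalecticant_bound} (the catalecticant bound for ordinary rank) and Proposition \ref{proposition:closed_set} (the set where $\rank C_F^\beta \leq l$ is Zariski-closed). This is the standard ``bound descends to the closure'' argument: a lower bound for $\rr$ that is given by the rank of a linear map depending algebraically on $F$ automatically descends to a lower bound for $\brr$.

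First I would set $r = \brr(F)$, so by Definition \ref{definition:sigma_rank_border_rank} we have $[F] \in \sigma_r(X_\Sigma) = \overline{\sigma_r^0(X_\Sigma)}$, where $X_\Sigma \subseteq \mathbb{P}(H^0(X_\Sigma, \mathcal{O}(\alpha))^*)$ via the embedding associated to $\alpha$. Consider the locus
\begin{equation*}
  Z_r = \{ [G] \in \mathbb{P}(H^0(X_\Sigma, \mathcal{O}(\alpha))^*) : \rank(C_G^\beta) \leq r \}\text{.}
\end{equation*}
By Proposition \ref{proposition:closed_set} this is Zariski-closed. Next I would observe that every $[G] \in \sigma_r^0(X_\Sigma)$ satisfies $\rr(G) \leq r$, so by Proposition \ref{proposition:rank_catalecticant_bound} we have $\rank(C_G^\beta) \leq r$, giving $\sigma_r^0(X_\Sigma) \subseteq Z_r$.

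Taking Zariski closures and using that $Z_r$ is closed, we obtain $\sigma_r(X_\Sigma) \subseteq Z_r$, and in particular $[F] \in Z_r$. This means $\rank(C_F^\beta) \leq r = \brr(F)$, which is exactly the desired inequality.

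I do not expect any real obstacle: both ingredients have already been established, and the argument is formally identical to the classical ``catalecticant gives equations of secant varieties'' principle. The only point that needs a line of care is that the function $[G] \mapsto \rank(C_G^\beta)$ is well-defined modulo scalars (so that $Z_r$ is a well-defined projective subvariety), but this is transparent since multiplying $G$ by a non-zero scalar multiplies the matrix of $C_G^\beta$ by that scalar and hence does not change its rank.
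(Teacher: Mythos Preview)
Your proposal is correct and follows essentially the same argument as the paper: set $k=\brr(F)$, use Proposition~\ref{proposition:rank_catalecticant_bound} to get $\sigma_k^0(X_\Sigma)\subseteq\{[G]:\rank C_G^\beta\le k\}$, then invoke Proposition~\ref{proposition:closed_set} to pass to the closure $\sigma_k(X_\Sigma)$. The only cosmetic difference is that you name the closed locus $Z_r$ and spell out the final step $[F]\in Z_r$ explicitly.
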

\begin{proof}
  Let $k = \brr(F)$. From Proposition \ref{proposition:rank_catalecticant_bound} we know that
  \begin{equation*}
    \sigma^0_k(X) = \{[G] \in \mathbb{P}T_\alpha| \rr(G) \leq k\} \subseteq \{[G] \in \mathbb{P}T_\alpha | \rank C_G^\beta \leq k\}\text{.}
  \end{equation*}
  Since the set on the right hand side is closed (Proposition \ref{proposition:closed_set}), we have
  \begin{equation*}
    \sigma_k(X) = \overline{\sigma_k^0(X)} \subseteq \{[G] \in \mathbb{P}T_\alpha | \rank C_G^\beta \leq k\}\text{,}
  \end{equation*}
  and the claim follows.
%  From Proposition \ref{proposition:closed_set} we know that for $l \coloneqq \rank C_F^\beta$ the set of $F' \in \mathbb{P}(H^0(X_\Sigma,
%  \mathcal{O}(\alpha))^*)$ such that $\rank C_{F'}^\beta \geq \rank C_F^\beta$ is Zariski-open. But from Corollary
%  \ref{corollary:cactus_catalecticant_bound} we know that $\rr(F') \geq \rank C_{F'}^\beta$.  Hence for any $F'$ in some Zariski-open neighbourhood of
%  $F$
%  \begin{equation*}
%    \rr(F') \geq \rank C_{F'}^\beta \geq \rank C_F^\beta\text{.}
%  \end{equation*}
%  It follows that $\brr(F) \geq \rank C_F^\beta$.
\end{proof}
\section{Upper bound on cactus rank}\label{section:upper_bound_on_cactus}
In this section, using apolarity we improve the bound for the cactus rank given in \cite{ballico_bernardi_gesmundo_cactus_rank_segre_veronese}. We
generalize the ideas given first in \cite{bernardi_ranestad_cactus_rank_of_cubics} to the multigraded setting.

Recall the maps of dehomogenization $\pi, \pi^*$ and the notion of homogenization denoted by $f^\text{h}$ from Subsection
\ref{subsection:dehomo_and_homo}. We dehomogenize by setting $x_1,\dots,x_k$ to $1$ and by setting every power $y_i^b$ to $1$, where $1 \leq i \leq
k$. Let $F \in H^0(X, \mathcal{O}(\alpha))^*$, and let $f = \pi(F)$.
\begin{proposition}
  Let $G$ be any homogeneous polynomial in $S$. Let $g = \pi^*(G)$. Suppose $g \apolar f = 0$. Then $G \apolar F = 0$.  
\end{proposition}
\begin{proof}
  Let $\tilde{F} = (y_1\ccdots y_k)^D\cdot F$, where $D > \deg G$ and here the degree means degree in $\mathbb{N}_{\geq 0}$ as a non-homogeneous
  polynomial. We will show that $\pi(G \apolar \tilde{F}) = g \apolar f$. By bilinearity of $\: \lrcorner$, we may assume that $\tilde{F}$ and $G$
  are monomials, i.e.\ $G = x_1^{b_1}\ccdots x_r^{b_r}$ and $\tilde{F} = y_1^{a_1}\ccdots y_r^{a_r}$.  We have
  \begin{equation*}
    G \apolar \tilde{F} = \begin{cases}
      y_1^{a_1 - b_1}\ccdots y_r^{a_r - b_r} & \text{if } a_i \geq b_i \text{ for all } i\text{,} \\
      0 & \text{otherwise.}
    \end{cases}
  \end{equation*}
  and
  \begin{equation*}
    g \apolar f = \begin{cases}
      y_{k+1}^{a_{k+1} - b_{k+1}}\ccdots y_r^{a_r - b_r} & \text{if } a_i \geq b_i \text{ for } i \geq k+1\text{,} \\
      0 & \text{otherwise.}
    \end{cases}
  \end{equation*}
  Since for $i\leq k$ we have $a_i \geq D > b_i$, we get that the conditions $a_i \geq b_i$ for all $i$ and $a_i \geq b_i$ for $i \geq k+1$ are
  equivalent. It follows that $\pi(G \apolar \tilde{F}) = g\apolar f$. 

  As $\pi$ is injective (Proposition \ref{proposition:injectivity}), we immediately get that $G \apolar \tilde{F} = 0$. We know that $F =
  (x_1\ccdots x_k)^D \apolar \tilde{F}$, so
  \begin{equation*}
    G \apolar F = G \apolar ((x_1\ccdots x_k)^D \apolar \tilde{F}) = (x_1\ccdots x_k)^D \apolar (G \apolar \tilde{F}) = 0\text{.}
  \end{equation*}
\end{proof}
This means that $(f^\perp)^\text{h} \subseteq F^\perp$ , so by Theorem \ref{theorem:multigraded_apolarity} the ideal $f^\perp$ defines a scheme $R$
such that $F \in \langle R \rangle$ (notice that $(f^\perp)^\text{h}$ is $B$-saturated, as it is $x_1\ccdots x_r$-saturated by Proposition
\ref{proposition:hom_saturated}). Hence, its length gives an upper bound on the cactus rank of $F$. Let us calculate it in the case of the
Segre-Veronese embedding.

Let $\mathbb{P}^{n_1}\times \dots \times \mathbb{P}^{n_k}$ be embedded by the line bundle $\mathcal{O}(d_1,\dots,d_k)$. Here, after we dehomogenize
$F$, we get that $f$ is a polynomial in $n_1$ variables of degree $(1,0,\dots,0)$, $n_2$ variables of degree $(0,1,0,\dots,0)$, \dots, and $n_k$
variables of degree $(0,\dots,0,1)$. We need to bound from above 
\begin{equation*}
  \dim \pi^*(S)/f^\perp\text{.}
\end{equation*}
This is equal to the dimension of the space of all partial derivatives of $f$. We do this just as in \cite[Proof of Theorem
3]{bernardi_ranestad_cactus_rank_of_cubics}. The space of all multihomogeneous polynomials of degree $(e_1,\dots,e_k)$ in $n_1 + n_2 + \dotsb + n_k$
variables has dimension
\begin{equation*}
  \binom{n_1 -1 + e_1}{e_1}\ccdots \binom{n_k -1 + e_k}{e_k}\text{.}
\end{equation*}
Now let $d = d_1 +\dots + d_k$ be the total degree. We bound the space of the partials of total degree at most $\frac{d}{2}$ by the number of
linearly independent variables, by which we differentiate. We bound the space of the other partials by the dimension of the space of all polynomials
of total degree less than $\frac{d}{2}$. Hence,
\begin{align*}
  \crr(F) &\leq \sum_{\substack{(e_1,\dots,e_k) |\\ e_1 + \dots + e_k \leq d/2 }}\binom{n_1 -1+ e_1}{e_1}\ccdots \binom{n_k -1+ e_k}{e_k} \\
  &+\sum_{\substack{(e_1,\dots,e_k) |\\ e_1 + \dots + e_k > d/2 }}\binom{n_1 -1+ d_1 - e_1}{d_1 - e_1}\ccdots \binom{n_k -1 + d_k - e_k}{d_k - e_k}\text{.}
\end{align*}
This is stronger than the bound in \cite{ballico_bernardi_gesmundo_cactus_rank_segre_veronese}, since the authors include all monomials, and we
include monomials of bounded multidegree.
\begin{example}
  Consider the Segre embedding 
  \begin{equation*}
    \underbrace{\mathbb{P}^n \times \dots \times \mathbb{P}^n}_{k \text{ times}} \hookrightarrow \mathbb{P}(\mathbb{C}^{n+1}\otimes \dots \otimes
    \mathbb{C}^{n+1})\text{,}
  \end{equation*}
  which is given by the line bundle $\mathcal{O}(1,\dots,1)$. Then the bound
  gives for any $F$
  \begin{align*}
    \crr(F) &\leq 1 + kn + \binom{k}{2}n^2 +\dots +\binom{k}{ k/2 } n^{ k/2 } \\
                  &+ \binom{k}{ k/2 - 1} n^{ k/2  - 1} + \dots +\binom{k}{2}n^2 + kn + 1 
  \end{align*}
  for $k$ even, and
  \begin{align*}
    \crr(F) &\leq 1 + kn + \binom{k}{2}n^2 +\dots +\binom{k}{\lfloor k/2 \rfloor} n^{\lfloor k/2 \rfloor} \\
                  &+ \binom{k}{\lfloor k/2 \rfloor} n^{\lfloor k/2 \rfloor} + \dots +\binom{k}{2}n^2 + kn + 1 
  \end{align*}
  for $k$ odd.
\end{example}
\begin{remark}
  The bound is sometimes better if we replace the condition $e_1 + \dots e_k \leq \frac{d}{2}$ by the condition $l(e_1,\dots,e_k) \leq
  \frac{l(d_1,\dots,d_k)}{2}$ for some linear form $l$.
  
  For instance, consider $\mathbb{P}^2\times \mathbb{P}^2 \times \mathbb{P}^2$ embedded by the line bundle $\mathcal{O}(3,3,2)$. We get that
  $l(e_1,e_2,e_3) = e_1 + e_2+ e_3$ gives the bound
  \begin{equation*}
    \crr(F) \leq 255
  \end{equation*}
  for any $F$, while the form $l(e_1,e_2,e_3) = 2e_1 + 2e_2 +3e_3$ gives the bound
  \begin{equation*}
    \crr(F) \leq 250
  \end{equation*}
  for any $F$, and the bound from the article \cite{ballico_bernardi_gesmundo_cactus_rank_segre_veronese} gives 
  \begin{equation*}
    \crr(F) \leq 294
  \end{equation*}
  for any $F$.
\end{remark}

\section{Examples}\label{section:examples}

We use what we proved to look at some examples. In this section, we denote the coordinates of the ring $S$ by Greek letters $\alpha,
\beta,\dots$ and the corresponding coordinates in $T$ by $x, y,\dots$ (possibly with subscripts).

We calculate ranks, cactus ranks and border ranks (denoted by $\rr(F)$, $\crr(F)$, $\brr(F)$) of some monomials  $F$ for toric surfaces embedded into
projective spaces. See Definitions \ref{definition:sigma_rank_border_rank} and \ref{definition:cactus_rank} for the definitions of these ranks.
\subsection{$\mathbb{P}^1\times\mathbb{P}^1$}\label{subsection:p1timesp1}
Consider the set
\begin{equation*}
  \{\rho_{\alpha,0} = (1, 0), \rho_{\alpha,1} = (-1,0), \rho_{\beta,0} = (0, 1), \rho_{\beta,1} = (0, -1)\}\text{.}
\end{equation*}
Let $\Sigma$ be the only complete fan such that this set is its set of rays. Then $X_\Sigma$ is $\mathbb{P}^1 \times \mathbb{P}^1$, which is smooth.

\[\begin{tikzpicture}[scale = 0.7]
  \draw[-latex, thin] (0,0) -- (1,0);
  \draw[-latex, thin] (0,0) -- (0,1);
  \draw[-latex, thin] (0,0) -- (-1,0);
  \draw[-latex, thin] (0,0) -- (0,-1);
  \foreach \x in {-2,...,2}
  {
    \foreach \y in {-2,...,2}
    {
      \draw[fill] (\x,\y)circle [radius=0.025];
    }
  }
  \node[below right] at (1,0) {$\rho_{\alpha,0}$};
  \node[above] at (0,1) {$\rho_{\beta,0}$};
  \node[below left] at (-1,0) {$\rho_{\alpha,1}$};
  \node[below] at (0,-1) {$\rho_{\beta,1}$};
\end{tikzpicture}\]
Its class group is the free abelian group on two generators $D_{\rho_{\alpha,0}} \sim D_{\rho_{\alpha,1}}$ and $D_{\rho_{\beta,0}} \sim
D_{\rho_{\beta,1}}$. Here and later in this section $D_\rho$ is the toric invariant divisor corresponding to $\rho$ (as in Section
\ref{section:toric_varieties}) and $\sim$ means the linear equivalence. Let $\alpha_0, \alpha_1, \beta_0, \beta_1$ be the variables corresponding to
$\rho_{\alpha,0}$, $\rho_{\alpha,1}$, $\rho_{\beta,0}$, $\rho_{\beta,1}$. As a result, we may think of $S$ as the polynomial ring
$\mathbb{C}[\alpha_0, \alpha_1, \beta_0, \beta_1]$ graded by $\mathbb{Z}^2$, where the grading is given by
\begin{center}
  \begin{tabular}{c|cccc}
    $f$ & $\alpha_0$ & $\alpha_1$ & $\beta_0$ & $\beta_1$ \\
    \hline
    \multirow{2}{*}{$\deg f$} & 1 & 1 & 0 & 0  \\
     & 0 & 0 & 1 & 1 
  \end{tabular}
\end{center}
The nef cone in $(\Cl X_\Sigma)_{\mathbb{R}}$ is generated by $D_{\rho_{\alpha,0}}$ and $D_{\rho_{\beta,0}}$. 

Let $x_0, x_1, y_0, y_1$ be the basis dual to $\alpha_0, \alpha_1, \beta_0, \beta_1$. We consider the problem of determining cactus ranks and ranks of
monomials $F = x_0^{k_0}x_1^{k_1}y_0^{l_0}y_1^{l_1}$, where $k_0 \geq k_1 \geq 1, l_0 \geq l_1 \geq 1$. The annihilator ideal is $(\alpha_0^{k_0 + 1},
\alpha_1^{k_1 + 1}, \beta_0^{l_0 + 1}, \beta_1^{l_1 + 1})$. We have
\begin{equation*}
  \dim (S/F^{\perp})_{(k_1, l_1)} = (k_1 + 1)(l_1 + 1)\text{.}
\end{equation*}
It follows that 
\begin{equation*}
  \crr(F) \geq (k_1 + 1)(l_1 + 1)\text{.}
\end{equation*}
But $I = (\alpha_1^{k_1 + 1}, \beta_1^{l_1 + 1}) \subseteq F^\perp$ is a $B$-saturated ideal of a scheme of length $(k_1 + 1)(l_1 + 1)$. This is
because we can look locally, at the affine open set where $\alpha_0, \beta_0 \neq 0$. There our scheme becomes
\begin{equation*}
  \Spec \mathbb{C}\left[\frac{\alpha_1}{\alpha_0},\frac{\beta_1}{\beta_0}\right]
  /\left(\frac{\alpha_1^{k_1 + 1}}{\alpha_0^{k_1 + 1}}, \frac{\beta_1^{l_1 + 1}}{\beta_0^{l_1 + 1}}\right) \cong
  \Spec \mathbb{C}[u,v]/(u^{k_1 + 1}, v^{l_1 + 1})
\end{equation*}
for some variables $u,v$. The scheme constructed in this way has desired length. Hence, by Theorem \ref{theorem:multigraded_apolarity}
\begin{equation*}
  \crr(F) = (k_1 + 1)(l_1 + 1)\text{.}
\end{equation*}

Now we address the problem of finding ranks of such monomials. We prove Theorem \ref{theorem:inequalities}.

Let $R$ be the polynomial ring $\mathbb{C}[u,v]$.
\begin{lemma}
  \label{lemma:bezout}
  Consider the ideal $I = (u^m v^n - 1, u^p - v^q) \subseteq R$, where $m, n\geq 1$ and at least one of the integers $p,q$ is greater than or equal to
  $1$. Then $V(I) \subset \mathbb{A}^2$ consists of $mq + np$ reduced points.
\end{lemma}
\begin{proof}
  First we show that $u^m v^n - 1, u^p - v^q$ intersect transversally. Let $s \in \mathbb{N}_0$ be the smallest number such that $u^s v^t - 1 \in
  I$ for some $t \in \mathbb{Z}_+$. Let $i \in \mathbb{N}_0$ be the smallest number such that $u^i  - v^j \in I$ for some $j \in \mathbb{Z}_+$. We
  claim that $s = 0$ or $i = 0$. Assume to the contrary, that $\min(s,i) > 0$. If $s \geq i$, then we have
  \begin{equation*}
    u^s v^t - 1 - u^{s - i}v^t(u^i - v^j) = u^{s-i}v^{j + t} -1 \in I\text{,}
  \end{equation*}
  which contradicts the minimality of $s$. If $s < i$, then
  \begin{equation*}
    v^t(u^i - v^j) - u^{i-s}(u^s v^t - 1) = u^{i-s} - v^{t + j} \in I \text{,}
  \end{equation*}
  which contradicts the minimality of $i$.

  We get that $v^i - 1 \in I$ for some $i \in \mathbb{Z}_+$. Similarly, by interchanging the roles of $i,s$ with $j, t$, we get that $u^j -1 \in
  I$ for some $j \in \mathbb{Z}_+$. The polynomials $v^i - 1$ and $u^j - 1$ intersect transversally in $ij$ points, so $u^m v^n -1, u^p - v^q$ also
  intersect transversally.

  We want to use B\'ezout's theorem for $\mathbb{P}^1\times \mathbb{P}^1$. In order to do so, we homogenize generators of $I$ and check that they have
  no roots at infinity. We consider the dehomogenization given by the ring homomorphism $S \to R$, $\alpha_0 \mapsto u$, $\alpha_1 \mapsto 1$,
  $\beta_0 \mapsto v$, $\beta_1 \mapsto 1$. Then the generators of $I$ become
  \begin{equation}
    \label{equation:generators}
    \alpha_0^m\beta_0^n - \alpha_1^m \beta_1^n, \alpha_0^p \beta_1^q - \alpha_1^p \beta_0^q\text{.}
  \end{equation}
  Now we can see that if $\alpha_1 = 0$, then $\alpha_0 \neq 0$, so if we put this into the first generator in Equation \ref{equation:generators}, we
  get that $\beta_0 = 0$, and if we put it into the second generator, we get $\beta_1 = 0$. But $\beta_0$ and $\beta_1$ cannot simultaneously be $0$.
  Similarly, if $\beta_1 = 0$, then $\beta_0 \neq 0$. From the first generator, we get that $\alpha_0 = 0$, and from the second we have $\alpha_1 =
  0$, but the two equalities cannot hold at the same time.

  This means that the polynomials $u^m v^n - 1, u^p - v^q$ have no common roots at infinity, so we can use multihomogeneous B\'ezout's theorem (see
  \cite[Example 4.9]{MR1328833}) to get that $u^m v^n - 1, u^p -v^q$ have $mq + np$ common roots.
\end{proof}

\begin{proof}[Proof of Item \eqref{item:first_inequality} of Theorem \ref{theorem:inequalities}]
  If $k_0 = k_1$ or $l_0 = l_1$, Item \eqref{item:first_inequality} becomes Equation \eqref{equation:obvious_rank_inequality}, so it is true. Now
  assume $k_0 > k_1$ and $l_0 > l_1$ and consider $I = (u^{k_1 + 1} - v^{l_1 + 1}, u^{k_0 + 1}v^{l_0 - l_1} - 1)$. Let 
  \begin{equation*}
    M = (k_0 + 1)(l_1 + 1) + (k_1+ 1)(l_0 + 1) - (k_1 + 1)(l_1 + 1)\text{.}
  \end{equation*}
  From Proposition \ref{proposition:hom_dehom} we know that $I^\text{h}$ is $\alpha_1 \beta_1$-saturated, which implies that it is $B$-saturated (we
  homogenize in the same way as in the proof of Lemma \ref{lemma:bezout}). By Lemma \ref{lemma:bezout}, we get that $I^\text{h}$ is a radical ideal of
  $M$ points. We need to show that $I^\text{h} \subseteq F^\perp$. From Proposition \ref{proposition:binomial} it suffices to show that for $u^m v^n -
  1 \in I$ we have $(u^m v^n - 1)^\text{h} \in F^\perp$ and that for $u^p - v^q \in I$ we have $(u^p - v^q)^\text{h} \in F^\perp$. 

  Since $u^{k_1 + 1} - v^{l_1 + 1} \in I$, from Lemma \ref{lemma:bezout} we get that any element of the form $u^m v^n - 1 \in I$ must satisfy $(k_1 + 1)n +
  (l_1 + 1)m \geq M$.

  Let us picture the polynomials on $\mathbb{Z}^2$. We put the polynomial $u^m v^n - 1$ in the point $(m,n)$ (that is the degree of the homogenization).
  Then the binomials $u^m v^n - 1 \in I$ lie above or on the line connecting two points: $(k_0 + 1, l_0 - l_1)$ and $(k_0 - k_1, l_0 + 1)$. Similarly,
  the binomials $u^p - v^q \in I$ lie above or on the line $(k_0 + 1)q + (l_0 - l_1)p = M$.

  We have:

  \emph{Claim 1:} for each binomial of the form $u^m v^n - 1 \in I$ we have either $m \geq k_0 + 1$ or $n \geq l_0 + 1$. It suffices to argue that there
  are no elements $u^m v^n - 1 \in I$ in the interior of the segment connecting points $(k_0 + 1, l_0 - l_1)$ and $(k_0 - k_1, l_0 + 1)$ nor in the
  interior of the triangle with vertices $(k_0 +1,l_0 - l_1), (k_0 - k_1, l_0 + 1), (k_0 + 1, l_0 + 1)$. Suppose that $u^m v^n - 1$ is such, then
  \begin{equation*}
    u^m v^n - 1 - (u^{k_0 + 1} v^{l_0 - l_1} - 1) = u^m v^{l_0 - l_1} (v^{n - l_0 + l_1} - u^{k_0 + 1 - m}) \in I\text{.}
  \end{equation*}
  Since $I$ is $uv$-saturated, we get that $u^{k_0 + 1 - m} - v^{n - l_0 + l_1} \in I$. But $k_0 + 1 - m < k_1 + 1$ and $n - l_0 + l_1 < l_1 + 1$
  (because $m > k_0 - k_1$ and $n < l_0 + 1$, respectively), so the point $(p,q) = (k_0 + 1 - m, n - l_0 +l_1)$ lies below the line $ (k_0 + 1)q + (l_0
  - l_1)p = M$, a contradiction.

  \emph{Claim 2:} there are no binomials $u^p - v^q \in I$ lying in the interior of the rectangle with vertices $(k_1 + 1, 0), (k_0 + 1,0), (k_1 + 1,l_1
  + 1), (k_0 + 1, l_1+ 1)$. Indeed, for any such binomial $u^p - v^q$ we would have
  \begin{equation*}
    u^p - v^q - u^{p-k_1 -1} (u^{k_1 + 1} - v^{l_1 + 1}) = v^q(u^{p-k_1 - 1} v^{l_1 + 1 - q} - 1) \in I\text{,}
  \end{equation*}
  so also $u^{p - k_1 -1}v^{l_1 + 1 - q} - 1 \in I$. But $p - k_1 - 1 < k_0 - k_1$ and $l_1 + 1 -q < l_0 + 1$ (since $p < k_0 + 1$ and $q > 0$,
  respectively), so the point $(m,n) = (p - k_1 - 1, l_1 + 1 - q)$ lies below the line $(k_1 + 1)n + (l_1 + 1)m = M$, a contradiction.

  \emph{Claim 3:} there are no binomials $u^p - v^q \in I$ lying in the interior of the rectangle with vertices $(0, l_1 + 1), (0,l_0 + 1), (k_1 + 1,l_1
  + 1), (k_1 + 1, l_0+ 1)$. This is just \emph{Claim 2} with the roles of the axes reversed.

  From \emph{Claim 1, 2} and \emph{3} it follows that for each $u^m v^n -1 \in I$ we have $(u^m v^n -1)^\text{h} \in F^\perp$ and for $u^p - v^q \in I$
  we have $(u^p - v^q)^\text{h} \in F^\perp$. From it we conclude that for any binomial $b \in I$ we have $b^\text{h} \in F^\perp$ (since homogenization
  is well-behaved with respect to multiplication by monomials), so from Proposition \ref{proposition:binomial} we have $I^\text{h} \subseteq F^\perp$,
  and we are done with Item \eqref{item:first_inequality}.
\end{proof}
\begin{proof}[Proof of Item \eqref{item:second_inequality}]
  Suppose that $\rr(F) < (k_0 + 1)(l_1 + 1)$. Then by Theorem \ref{theorem:multigraded_apolarity} there is a radical $B$-saturated ideal $I$ of at
  most $(k_0 + 1)(l_1 + 1) - 1$ points such that $I \subseteq F^{\perp} = (\alpha_0^{k_0 + 1}, \alpha_1^{k_1 + 1}, \beta_0^{l_0 + 1}, \beta_1^{l_1 +
  1})$. By Proposition \ref{proposition:zero_dimensional} we have that $\dim (S/I)_{(k_0,l_1)} \leq (k_0 + 1)(l_1+1) -1$. We know that $\dim
  S_{(k_0,l_1)} = (k_0 + 1)(l_1 + 1)$. But this means that $\dim I_{(k_0,l_1)} \geq 1$. We have
  \begin{align*}
    &F^{\perp}_{(k_0,l_1)} = \\
    &\alpha_1^{k_1 + 1} \cdot \langle  \alpha_0^{k_0-k_1 - 1}, \alpha_0^{k_0-k_1 - 2}\alpha_1 ,\dots, \alpha_1^{k_0-k_1 -1} \rangle
    \cdot \langle \beta_0^{l_1}, \beta_0^{l_1 -1} \beta_1,\dots,\beta_1^{l_1}\rangle
  \end{align*}
  Hence there is a non-zero polynomial
  \begin{equation}\label{equation:rank_induction}
    \alpha_1^{k_1 + 1}(\eta_{k_0-k_1 -1}\alpha_0^{k_0-k_1-1} + \eta_{k_0 - k_1 -2}\alpha_0^{k_0 - k_1 -2}\alpha_1 + \ldots + \eta_0\alpha_1^{k_0-k_1-1})
    \in I\text{,}
  \end{equation}
  where $\eta_i \in \langle \beta_0^{l_1}, \beta_0^{l_1-1} \beta_1,\ldots, \beta_1^{l_1} \rangle$.

  We prove by descending induction on $j$ that we have
  \begin{equation*}
    \eta_{k_0 - k_1 - 1} = \eta_{k_0 - k_1 - 2} = \dots = \eta_{j+1} = 0\text{.}
  \end{equation*}
  The beginning of the induction is trivial ($j = k_0 - k_1 -1$). Now assume that the induction assumption
  holds for a given $j$. Then (by Equation \eqref{equation:rank_induction}) for some $l \geq 1$
  \begin{equation*}
    \alpha_1^l(\eta_{j}\alpha_0^j + \eta_{j-1}\alpha_0^{j-1}\alpha_1 + \ldots + \eta_0\alpha_1^j) \in I\text{.}
  \end{equation*}
  The ideal $I$ is radical, so we know that
  \begin{equation*}
    \alpha_1(\eta_{j}\alpha_0^j + \eta_{j-1}\alpha_0^{j-1}\alpha_1 + \ldots + \eta_0\alpha_1^j) \in I\text{.}
  \end{equation*}
  But $I \subseteq F^{\perp}$, so
  \begin{equation*}
    \alpha_1(\eta_{j}\alpha_0^{j}+\eta_{j-1}\alpha_0^{j-1}\alpha_1 + \ldots + \eta_0\alpha_1^{j}) \apolar F = 0\text{.}
  \end{equation*}
  We know that
  \begin{align*}
    \alpha_1(\eta_{j}\alpha_0^j + \ldots + \eta_0\alpha_1^j) \apolar F =& \; \bar{\eta}_j x_0^{k_0 - j} x_1^{k_1 - 1} + \bar{\eta}_{j-1} x_0^{k_0 -j
    + 1} x_1^{k_1 - 2} + \ldots + \\ + &\; \bar\eta_{\max(0,j-k_1 + 1)} x_0^{k_0 - \max(0,j-k_1 + 1)}x_1^{\max(0,k_1 - 1 - j)} \text{,}
  \end{align*}
  where $\bar{\eta}_i = \eta_i \apolar y_0^{l_0} y_1^{l_1} \in \mathbb{C}[y_0, y_1]_{l_0}$ are such that for every $i$ if $\eta_i \neq 0$, then
  $\bar{\eta}_i \neq 0$.  As all the monomials in the sum are different, we get that all $\bar{\eta}_i$ in the sum are $0$, which implies that all
  corresponding $\eta_i$ are $0$. At least the first summand is present in the sum since $k_0 -j \geq k_1 + 1$ and $k_1 - 1 \geq 0$. This is our
  induction assumption for some $j' < j$.

  The fact that $\eta_i$ are all zero gives a contradition with the fact that the polynomial was non-zero. This proves Item
  \eqref{item:second_inequality}.
\end{proof}

\begin{proof}[Proof of Item \eqref{item:third_inequality}]
  Let $I \subseteq F^\perp$ be a
  $B$-saturated radical ideal of at most $(k_1 + 2)(l_1 + 2) - 2$ points. Then $\dim (S/I)_{(k_1 + 1,l_1 + 1)} \leq (k_1 + 2)(l_1 + 2) - 2$, so $\dim
  I_{(k_1 + 1,l_1 + 1)} \geq 2$. Since
  \begin{align*}
    F^\perp_{k_1 + 1, l_1 + 1} = & \alpha_1^{k_1 + 1} \langle \beta_0^{l_1 + 1}, \beta_0^{l_1}\beta_1,\dots,\beta_0 \beta_1^{l_1} \rangle\\
    & \langle \alpha_0^{k_1 + 1}, \alpha_0^{k_1}\alpha_1,\dots,\alpha_0 \alpha_1^{k_1}\rangle \cdot \beta_1^{l_1 + 1} + \langle \alpha_1^{k_1 +
    1}\beta_1^{l_1 + 1} \rangle\text{,}
  \end{align*}
  we get that $I_{(k_1 + 1, l_1 + 1)}$ has a basis consisting of
  \begin{align*}
    t_1 &= \alpha_1^{k_1 + 1}(\kappa_1\beta_0^{l_1}\beta_1 +\dots + \kappa_{l_1}\beta_0\beta_1^{l_1})  \\ 
    &+ (\lambda_0 \alpha_0^{k_1 + 1} + \dots + \lambda_{k_1} \alpha_0 \alpha_1^{k_1})\beta_1^{l_1 + 1} + \eta \alpha_1^{k_1 + 1}\beta_1^{l_1 + 1}\text{,} \\
    t_2 &= \alpha_1^{k_1 + 1}(\mu_0 \beta_0^{l_1 + 1} + \mu_1\beta_0^{l_1}\beta_1 +\dots + \mu_{l_1}\beta_0\beta_1^{l_1})  \\
    &+ (\nu_0 \alpha_0^{k_1 + 1} + \dots + \nu_{k_1}\alpha_0 \alpha_1^{k_1})\beta_1^{l_1 + 1} +  \zeta \alpha_1^{k_1 + 1}\beta_1^{l_1 + 1}\text{,} 
  \end{align*}
  where $\kappa_i, \lambda_i, \mu_i, \nu_i, \eta, \zeta \in \mathbb{C}$.  If $\kappa_1 = 0$, then (since $I$ is radical and $t_1$ is divisible by
  $\beta_1^2$) $t_1/\beta_1 \in I$. Out of the monomials of $t_1/\beta_1$, only the ones divisible by $\alpha_1^{k_1 + 1}$ are in $F^\perp$. Hence,
  $\lambda_i = 0$ for all $i$. But then from the fact that $I$ is radical and that $t_1/\beta_1$ is divisible by $\alpha_1^2$ we get that
  $t_1/(\alpha_1\beta_1) \in I$. Since none of the monomials of $t_1/(\alpha_1\beta_1)$ are in $F^\perp$, we get $\eta = \kappa_2 = \kappa_3 = \dots =
  \kappa_{l_1} = 0$, a contradiction. It follows that we may assume that $\kappa_1 = 1$.

  If $\mu_0 = 0$, then we consider the element $\mu_1 t_1 - t_2$. It is divisible by $\beta_1^2$, so in the same way as before, we get that all the
  coefficients of $\mu_1 t_1 - t_2$ are $0$. It follows that we may assume that $\mu_0 = 1$.

  In this case
  \begin{align*}
    &\beta_0 t_1 - \beta_1 t_2 \\
    &= \alpha_1^{k_1 + 1}((\kappa_2 - \mu_1)\beta_0^{l_1}\beta_1^2 +\dots+ 
    (\kappa_{l_1} - \mu_{l_1 - 1})\beta_0^2 \beta_1^{l_1} -\mu_{l_1}\beta_0 \beta_1^{l_1 + 1}) \\
    &+(\lambda_0 \alpha_0^{k_1 + 1} + \lambda_1 \alpha_0^{k_1}\alpha_1 +\dots \lambda_{k_1}\alpha_0\alpha_1^{k_1})\beta_0\beta_1^{l_1 + 1} + \eta
    \alpha_1^{k_1 + 1} \beta_0\beta_1^{l_1 + 1} \\
    &-(\nu_0 \alpha_0^{k_1 + 1} + \nu_1 \alpha_0^{k_1}\alpha_1 +\dots \nu_{k_1}\alpha_0\alpha_1^{k_1})\beta_1^{l_1 + 2} + \zeta
    \alpha_1^{k_1 + 1} \beta_1^{l_1 + 2} \text{.}
  \end{align*}
  This is divisible by $\beta_1^2$, so also
  \begin{equation*}
    \frac{\beta_0 t_1 - \beta_1 t_2} {\beta_1} \in I\text{.} 
  \end{equation*}
  The monomials $\alpha_0^{k_1 + 1}\beta_0\beta_1^{l_1},\dots,\alpha_0\alpha_1^{k_1}\beta_0 \beta_1^{l_1}$ are not in $F^\perp$ (which is a monomial
  ideal), so $\lambda_i = 0$ for all $i$. Hence $t_1$ is divisible by $\alpha_1^{k_1 + 1}$, and therefore $t_1/\alpha_1^{k_1} \in I\subseteq F^\perp$.
  The monomial $\alpha_1 \beta_0^{l_1} \beta_1$ is not in $F^\perp$, but its coefficient in $t_1/\alpha_1^{k_1}$ is $\kappa_1 = 1$, a contradiction.
\end{proof}

\subsection{Hirzebruch surface $\mathbb{F}_1$}\label{subsection:hirzebruch_surface}
Consider the set
\begin{equation*}
  \{\rho_{\alpha,0} = (1, 0), \rho_{\alpha,1} = (-1,-1), \rho_{\beta,0} = (0, 1), \rho_{\beta,1} = (0, -1)\}\text{.}
\end{equation*}
Let $\Sigma$ be the only complete fan such that this set is the set of rays of $\Sigma$. The example in \cite[Example 3.1.16]{cox_book} is the same,
only with a different ray arrangement. Then $X_\Sigma$ is called the Hirzebruch surface $\mathbb{F}_1$. It is smooth.

\[\begin{tikzpicture}[scale = 0.7]
  \draw[-latex, thin] (0,0) -- (1,0);
  \draw[-latex, thin] (0,0) -- (0,1);
  \draw[-latex, thin] (0,0) -- (0,-1);
  \draw[-latex, thin] (0,0) -- (-1,-1);
  \foreach \x in {-2,...,2}
  {
    \foreach \y in {-2,...,2}
    {
      \draw[fill] (\x,\y)circle [radius=0.025];
    }
  }
  \node[below right] at (1,0) {$\rho_{\alpha,0}$};
  \node[above] at (0,1) {$\rho_{\beta,0}$};
  \node[below left] at (-1,-1) {$\rho_{\alpha,1}$};
  \node[below] at (0,-1) {$\rho_{\beta,1}$};
\end{tikzpicture}\]
Its class group is the free abelian group on two generators $D_{\rho_{\alpha,0}} \sim D_{\rho_{\alpha,1}}$ and $D_{\rho_{\beta,1}}$. Moreover,
$D_{\rho_{\beta,0}} \sim D_{\rho_{\beta,1}} + D_{\rho_{\alpha, 0}}$. Let $\alpha_0, \alpha_1, \beta_0, \beta_1$ be the variables corresponding to
$\rho_{\alpha,0}$, $\rho_{\alpha,1}$, $\rho_{\beta,0}$, $\rho_{\beta,1}$. As a result, we may think of $S$ as the polynomial ring
$\mathbb{C}[\alpha_0, \alpha_1, \beta_0, \beta_1]$ graded by $\mathbb{Z}^2$, where the grading is given by
\begin{center}
  \begin{tabular}{c|cccc}
    $f$ & $\alpha_0$ & $\alpha_1$ & $\beta_0$ & $\beta_1$ \\
    \hline
    \multirow{2}{*}{$\deg f$} & 1 & 1 & 1 & 0  \\
     & 0 & 0 & 1 & 1 
  \end{tabular}
\end{center}
The nef cone in $(\Cl X_\Sigma)_{\mathbb{R}}$ is generated by $D_{\rho_{\alpha,0}}$ and $D_{\rho_{\beta,0}} \sim D_{\rho_{\alpha,0}} +
D_{\rho_{\beta, 0}}$. 

\begin{example}
  Consider the monomial $F \coloneqq x_0 x_1 y_0 y_1$, where $x_0, x_1, y_0, y_1$ is the basis dual to $\alpha_0, \alpha_1, \beta_0, \beta_1$. It has
  degree $(3, 2)$, so it is in the interior of the nef cone, so the corresponding line bundle is very ample. We claim that the rank and the cactus
  rank of $F$ are four, and that the border rank is three:
    \begin{center}
      \begin{tabular}{|c|c|c|}
	\hline
	 $\rr(F)$ & $\crr(F)$ & $\brr(F)$ \\
	\hline
	$4$ & $ 4$ & $3$  \\
	\hline
      \end{tabular}
    \end{center}
    
  Let us compute the Hilbert function of the apolar algebra of $F$.   
  \[\begin{tikzpicture}
    \draw[help lines] (-2,-1) grid (5,5);
    \draw[->, thick] (0,0) -- (0,5);
    \draw[->, thick] (0,0) -- (5,0);
    \draw[fill] (0,0)circle [radius=0.025];
    \draw[fill] (1,0)circle [radius=0.025];
    \draw[fill] (2,0)circle [radius=0.025];
    \draw[fill] (3,0)circle [radius=0.025];
    \draw[fill] (0,1)circle [radius=0.025];
    \draw[fill] (1,1)circle [radius=0.025];
    \draw[fill] (2,1)circle [radius=0.025];
    \draw[fill] (3,1)circle [radius=0.025];
    \draw[fill] (0,2)circle [radius=0.025];
    \draw[fill] (1,2)circle [radius=0.025];
    \draw[fill] (2,2)circle [radius=0.025];
    \draw[fill] (3,2)circle [radius=0.025];
    \node[below left] at (0,0) {1};
    \node[below left] at (1,0) {2};
    \node[below left] at (2,0) {1};
    \node[below left] at (3,0) {0};
    \node[below left] at (0,1) {1};
    \node[below left] at (1,1) {3};
    \node[below left] at (2,1) {3};
    \node[below left] at (3,1) {1};
    \node[below left] at (0,2) {0};
    \node[below left] at (1,2) {1};
    \node[below left] at (2,2) {2};
    \node[below left] at (3,2) {1};
  \end{tikzpicture}\]
  Notice that it can only be non-zero in the first quadrant. Hence, the symmetry of the Hilbert function (see Proposition
  \ref{proposition:hilbert_function_symmetry}) implies it can only be non-zero in the rectangle with vertices $(0,0)$, $(3,0)$, $(3,2)$, $(0,2)$.
  Computing each value of the Hilbert function is just computing the kernel of a linear map. For instance, for degree $(1,0)$, we have
  \begin{equation*}
    (a_0 \alpha_0 + a_1 \alpha_1) \apolar x_0 x_1 y_0 y_1 = a_0 x_1 y_0 y_1 + a_1 x_0 y_0 y_1\text{,}
  \end{equation*}
  which is zero if and only if $a_0 = 0$ and $a_1 = 0$. Hence, the Hilbert function is
  \begin{equation*}
    dim_{\mathbb{C}} (S/F^\perp)_{(1, 0)} = dim_{\mathbb{C}}S_{(1,0)} - dim_{\mathbb{C}}F^\perp_{(1,0)} = 2 - 0 = 2\text{.}
  \end{equation*}
  For degree $(2, 1)$, we get
  \begin{multline*}
    (a \alpha_0^2 \beta_1 + b \alpha_0\alpha_1\beta_1 + c \alpha_1^2\beta_1 + d\alpha_0\beta_0 + e\alpha_1\beta_0) \apolar x_0 x_1 y_0 y_1 
    = b y_0 + d x_1 y_1 + e x_0 y_1\text{.}
  \end{multline*}
  So the result is zero precisely for vectors of the form $(a,0,c,0,0)$, where $a, c \in \mathbb{C}$. Then
  \begin{equation*}
    dim_{\mathbb{C}} (S/F^\perp)_{(2, 1)} = 5 - 2 = 3\text{.}
  \end{equation*}
  
  The apolar ideal $F^\perp$ is $(\alpha_0^2, \alpha_1^2, \beta_0^2, \beta_1^2)$. (It is independent of the grading, so we can just copy the result
  from the Waring rank case, see \cite{ranestad_schreyer_on_the_rank_of_a_symmetric_form}.)

  Firstly, we will show that the rank is at most four. By the Apolarity Lemma, toric version (Theorem \ref{theorem:multigraded_apolarity}), it is
  enough to find a reduced zero-dimensional subscheme of length four $R$ of $X_\Sigma$ (i.e.\ a set of four points in $X_\Sigma$) such that $I(R)
  \subseteq F^\perp$.  The subscheme defined by $I = (\alpha_0^2 -\alpha_1^2, \beta_0^2 - \alpha_1^2 \beta_1^2) \subseteq F^\perp$ satisfies these
  requirements. This scheme is a reduced union of four points: $[1, 1; 1, 1], [1, 1; 1, -1], [1, -1; 1, 1], [1, -1; 1, -1]$. As a consequence, we may
  write
  \begin{equation*}
    x_0 x_1 y_0 y_1 = \frac{1}{4}\left(\varphi(1, 1; 1, 1) - \varphi(1, 1; 1, -1) - \varphi(1, -1; 1, 1) + \varphi(1, -1; 1, -1)\right)\text{.}
  \end{equation*}

  We will show that the cactus rank is at least four. Suppose it is at most three. Then there is a $B$-saturated homogeneous ideal $I \subseteq F^\perp$
  defining a zero-dimensional subscheme $R$ of length at most three. From the calculation of the Hilbert function, we know that
  $\dim_{\mathbb{C}}{F^\perp_{(2,1)}} = 2$. Let us calculate $\dim_{\mathbb{C}} I_{(2,1)}$. Since $I$ is $B$-saturated, by Proposition
  \ref{proposition:ideal}, the vector subspace $I_{(2,1)} \subseteq S_{(2,1)}$ are the sections which are zero on $R$.  But from Proposition
  \ref{proposition:zero_dimensional}
  \begin{equation*}
    3 \geq \text{length of} \, R \geq \dim_{\mathbb{C}}S_{(2,1)} - \dim_{\mathbb{C}}I_{(2,1)} = 5 -\dim_{\mathbb{C}}I_{(2,1)}\text{,}
  \end{equation*}
  so
  \begin{equation*}
    \dim_{\mathbb{C}} I_{(2,1)} \geq 2\text{.}
  \end{equation*}
  By the Apolarity Lemma (Theorem \ref{theorem:multigraded_apolarity}), we have $I_{(2,1)} \subseteq (F^\perp)_{(2,1)}$. As the dimensions are equal,
  it follows that $I_{(2,1)} = (F^\perp)_{(2,1)}$. This means $\alpha_0^2 \beta_1, \alpha_1^2 \beta_1 \in I$. But $I$ is $B$-saturated, so $\alpha_0
  \alpha_1 \beta_1 \in I \subseteq F^\perp$, which implies that $\alpha_0 \alpha_1 \beta_1 \apolar x_0 x_1 y_0 y_1 = 0$, a contradiction. 

  Let us show that border rank of $F$ is at most three. Take $p = [\lambda, 1; 1, \mu] \in \mathbb{F}_1$. Then from Proposition
  \ref{proposition:formula}, we know that \begin{align*}
    [\lambda, 1; 1, \mu] \mapsto \lambda \mu\cdot \bigg( & \lambda^2 \mu x_0^3 y_1^2 + \lambda \mu x_0^2 x_1 y_1^2 + \mu x_0 x_1^2 y_1^2 + 
    \frac{\mu}{\lambda} x_1^3 y_1^2 \\
    + {} & \lambda x_0^2 y_0 y_1 + x_0 x_1 y_0 y_1 + \frac{1}{\lambda} x_1^2 y_0 y_1 \\
    + {} & \frac{1}{\mu} x_0 y_0^2 + \frac{1}{\mu \lambda} x_1 y_0^2\bigg )    \text{.}  
  \end{align*}
  But
  \begin{equation*}
    [0, 1; 1, \mu] \mapsto \mu\cdot \bigg( {\mu} x_1^3 y_1^2 + x_1^2 y_0 y_1 + \frac{1}{\mu} x_1 y_0^2 \bigg )  \text{,}  
  \end{equation*}
  and
  \begin{equation*}
    [1, 0; 1, 0] \mapsto x_0 y_0^2 \text{.}
  \end{equation*}
  Hence,
  \begin{multline*}
    - x_0 x_1 y_0 y_1 + \frac{1}{\lambda \mu} \varphi([\lambda, 1; 1, \mu]) - \frac{1}{\lambda \mu} \varphi([0, 1; 1, \mu]) -
    \frac{1}{\mu}\varphi([1, 0; 1, 0]) \\
    = \lambda^2 \mu x_0^3 y_1^2 + \lambda \mu x_0^2 x_1 y_1^2 + \mu x_0 x_1^2 y_1^2 + \lambda x_0 y_0^2 \xrightarrow{\lambda, \mu \to 0} 0 \text{.}
  \end{multline*}
  It follows that $x_0 x_1 y_0 y_1$ is expressible as a limit of linear combinations of three points on $X_\Sigma$, so the border rank is at most three.

  But there is another proof that the border rank of $F$ is at most three. We will show that the third secant variety $\sigma_3(X) = \mathbb{P}^8$. It
  suffices to show that $\dim \sigma_3(X_\Sigma)$ is eight. We will use Terracini's Lemma (Proposition \ref{proposition:terracini}).

  Since $X_\Sigma \to \mathbb{P}(H^0(X_\Sigma, \mathcal{O}(\alpha))^*)$ is given by a parametrization, we can calculate the projectivized tangent
  space. Take points of the form $[1, \lambda; \mu, 1]$, where $\lambda, \mu \in \mathbb{C}$. Then
  \begin{equation*}
    \varphi([1, \lambda; \mu, 1]) = [1, \lambda, \lambda^2, \lambda^3, \mu, \mu \lambda, \mu \lambda^2, \mu^2, \mu^2 \lambda]\text{.}
  \end{equation*}
  The coordinates are in the standard monomial basis of $H^0(X_\Sigma, \mathcal{O}(\alpha))^*$. The affine tangent space at $\varphi([1, \lambda; \mu,
  1])$ is spanned by the vector 
  \begin{equation*}
    v = [1, \lambda, \lambda^2, \lambda^3, \mu, \mu \lambda, \mu \lambda^2, \mu^2, \mu^2 \lambda]
  \end{equation*}
  and its two derivatives with respect to $\lambda$ and $\mu$:
  \begin{align*}
    \frac{\partial v}{\partial \lambda} & = [0, 1, 2 \lambda, 3 \lambda^2, 0, \mu, 2\mu \lambda, 0, \mu^2]\text{,} \\
    \frac{\partial v}{\partial \mu} & = [0, 0, 0, 0, 1, \lambda, \lambda^2, 2\mu, 2\mu \lambda]\text{,} \\
  \end{align*}
  If we take three general points, say $[1, x, y, 1], [1, s, t, 1], [1, u, v, 1]$, we can look at the space spanned by the three tangent
  spaces. This will be the space spanned by the rows of the following matrix:
  \begin{equation*}
    M = \left(
  \begin{matrix}
    1 & x & x^2 & x^3 & y & y x & y x^2 & y^2 & y^2 x \\
    0 & 1 & 2 x & 3 x^2 & 0 & y & 2y x & 0 & y^2 \\
    0 & 0 & 0 & 0 & 1 & x & x^2 & 2y & 2y x \\
    1 & s & s^2 & s^3 & t & t s & t s^2 & t^2 & t^2 s \\
    0 & 1 & 2 s & 3 s^2 & 0 & t & 2t s & 0 & t^2 \\
    0 & 0 & 0 & 0 & 1 & s & s^2 & 2t & 2t s \\
    1 & u & u^2 & u^3 & v & v u & v u^2 & v^2 & v^2 u \\
    0 & 1 & 2 u & 3 u^2 & 0 & v & 2v u & 0 & v^2 \\
    0 & 0 & 0 & 0 & 1 & u & u^2 & 2v & 2v u \\
  \end{matrix}
  \right)
  \end{equation*}
  We can calculate the determinant using for instance Macaulay2
  \begin{equation*}
    \det M = (s - u)(u - x)(s - x)(ys - xt - yu + tu + xv - sv)^4\text{.}
  \end{equation*}
  This is non-zero for general points on the variety. This means that the tangent space of the cone of the third secant variety at a general point has
  dimension nine, so $\dim \sigma_3(X_\Sigma) = 8$, hence $\sigma_3(X)$ fills the whole space.

  Finally, the border rank is at least three by Corollary \ref{corollary:border_catalecticant_bound}. We use it for the class $(2,1)$, recall
  that $\dim_{\mathbb{C}}(S/F^\perp)_\beta = \rank C_F^\beta$.

  \begin{remark}\label{remark:wild_case}
    We could also define the smoothable $X$-rank:
    \begin{equation*}
      \srr_X(F) = \min \{\length R | R \hookrightarrow X, \dim R = 0, F \in \langle R \rangle, R \text{ smoothable} \}\text{.} 
    \end{equation*}
    For the definition of a smoothable scheme, see \cite[Definition 5.16]{iarrobino_kanev_book_Gorenstein_algebras}. For more on the smoothable rank,
    see \cite{nisiabu_jabu_smoothable_rank_example}. We always have $\crr(F) \leq \srr(F) \leq \rr(F)$, so in the case of $\mathbb{F}_1$ and $F = x_0
    x_1 y_0 y_1$ we get $\srr(F) = 4$. In particular, we obtain what the authors in \cite{nisiabu_jabu_smoothable_rank_example} call a ``wild'' case,
    i.e.\ the border rank is strictly less than the smoothable rank.
  \end{remark}
\end{example}

\begin{example}
  For a similar case on the same variety, let $F = x_0^2 x_1^2 y_0 y_1$, then $\deg F = (5,2)$. Here the line bundle $\mathcal{O}(5,2)$ gives an
  embedding of $X_\Sigma$ into $\mathbb{P}^{14}$. We show that here the rank and the cactus rank are six, and that the border rank is five:
  \begin{center}
    \begin{tabular}{|c|c|c|}
      \hline
       $\rr(F)$ & $\crr(F)$ & $\brr(F)$ \\
      \hline
      $6$ & $ 6$ & $5$  \\
      \hline
    \end{tabular}
  \end{center}
  The apolar ideal is $F^\perp = (\alpha_0^3, \alpha_0^3, \beta_0^2, \beta_1^2)$.  The Hilbert function of $S/F^\perp$ is the following:

  \[\begin{tikzpicture}
    \draw[help lines] (-2,-1) grid (7,5);
    \draw[->, thick] (0,0) -- (0,5);
    \draw[->, thick] (0,0) -- (7,0);
    \foreach \x in {0,1,2,3,4,5}
    {
      \foreach \y in {0,1,2}
      {
        \draw[fill] (\x,\y)circle [radius=0.025];
      }
    }
    \node[below left] at (0,0) {1};
    \node[below left] at (1,0) {2};
    \node[below left] at (2,0) {3};
    \node[below left] at (3,0) {2};
    \node[below left] at (4,0) {1};
    \node[below left] at (5,0) {0};
    \node[below left] at (0,1) {1};
    \node[below left] at (1,1) {3};
    \node[below left] at (2,1) {5};
    \node[below left] at (3,1) {5};
    \node[below left] at (4,1) {3};
    \node[below left] at (5,1) {1};
    \node[below left] at (0,2) {0};
    \node[below left] at (1,2) {1};
    \node[below left] at (2,2) {2};
    \node[below left] at (3,2) {3};
    \node[below left] at (4,2) {2};
    \node[below left] at (5,2) {1};
  \end{tikzpicture}\]
  The ideal $I = (\alpha_0^3 - \alpha_1^3, \beta_0^2 - \beta_1^2 \alpha_1^2) \subseteq F^\perp$ is a $B$-saturated radical homogeneous ideal defining
  a subscheme of length six, so the rank is at most six.

  Suppose there is a homogeneous $B$-saturated ideal $I \subseteq F^\perp$ defining a subscheme of length five. We have
  \begin{align*}
    S_{(3,1)} &= \langle \alpha_0^2 \beta_0, \alpha_0 \alpha_1 \beta_0, \alpha_1^2 \beta_0, \alpha_0^3 \beta_1, \alpha_0^2 \alpha_1 \beta_1,
    \alpha_0 \alpha_1^2 \beta_1, \alpha_1^3 \beta_1 \rangle\text{, and} \\
    (F^\perp)_{(3, 1)} &= \langle \alpha_0^3 \beta_1, \alpha_1^3 \beta_1 \rangle\text{.}
  \end{align*}
  From Propostion \ref{proposition:zero_dimensional} we have $\dim_{\mathbb{C}}(S/I)_{(3,1)} \leq 5$, so $\dim_{\mathbb{C}}I_{(3,1)} \geq 7 - 5 = 2$.
  But $I_{(3,1)} \subseteq (F^\perp)_{(3,1)}$ from the Apolarity Lemma (Theorem \ref{theorem:multigraded_apolarity}), and also $\dim_{\mathbb{C}}
  (F^\perp)_{(3,1)} = 2$. This means that $I_{(3,1)} = (F^\perp)_{(3,1)}$. 

  Hence, $\alpha_0^3 \beta_1, \alpha_1^3 \beta_1 \in I$. As $I$ is $B$-saturated, we get $\alpha_0^2 \alpha_1^2 \beta_1 \in I \subseteq F^\perp$, but
  this is a contradiction since $\alpha_0^2 \alpha_1^2 \beta_1 \apolar F \neq 0$.

  The border rank is at least five because of Corollary \ref{corollary:border_catalecticant_bound}. Similarly to what we did before, we show that
  fifth secant variety fills the whole space, so the border rank of any polynomial is at most five. Here $\varphi = \varphi_{|\mathcal{O}(5,2)|}$ is
  given (in the standard monomial basis) by
  \begin{equation*}
    [1, \lambda; \mu, 1] \mapsto [1, \lambda, \lambda^2, \lambda^3, \lambda^4, \lambda^5, \mu, \lambda \mu, \lambda^2 \mu, \lambda^3 \mu, \lambda^4
    \mu, \mu^2, \lambda \mu^2, \lambda^2 \mu^2, \lambda^3 \mu^2]\text{.}
  \end{equation*}
  The tangent space of the affine cone of $X_\Sigma$ is spanned by $v = \varphi(1, \lambda; \mu, 1)$ and the two derivatives 
  \begin{align*}
    \frac{\partial v}{\partial \lambda} & = [0, 1, 2 \lambda, 3 \lambda^2, 4 \lambda^3, 5 \lambda^4,
      0, \mu, 2\lambda \mu, 3 \lambda^2 \mu, 4\lambda^3 \mu, 0, \mu^2, 2 \lambda \mu^2, 3\lambda^2 \mu^2]\text{,} \\
    \frac{\partial v}{\partial \mu} & = [0, 0, 0, 0,0,0, 1, \lambda, \lambda^2, \lambda^3, \lambda^4, 2\mu, 2\lambda \mu, 2\lambda^2 \mu, 2\lambda^3
    \mu] \text{.} \\
  \end{align*}
  If we take five points, say $[1, x; y, 1], [1, u; v, 1], [1, s; t, 1], [1, a, b, 1], [1, c, d, 1]$, we get that the tangent space of the affine cone
  of $\sigma_5(X_\Sigma)$ is spanned by the rows of the following matrix:
  \begin{equation*}
    \begin{pmatrix}
    1& x& x^2& x^3& x^4& x^5& y& x y& x^2 y& x^3 y& x^4 y& y^2& x y^2& x^2 y^2& x^3 y^2 \\
    0& 1& 2 x& 3 x^2& 4 x^3& 5 x^4& 0& y& 2x y& 3 x^2 y& 4x^3 y& 0& y^2& 2 x y^2& 3x^2 y^2\\
    0& 0& 0& 0 & 0 & 0 & 1& x& x^2& x^3& x^4& 2y& 2x y& 2x^2 y& 2x^3 y\\
    1& s& s^2& s^3& s^4& s^5& t& s t& s^2 t& s^3 t& s^4 t& t^2& s t^2& s^2 t^2& s^3 t^2 \\
    0& 1& 2 s& 3 s^2& 4 s^3& 5 s^4& 0& t& 2s t& 3 s^2 t& 4s^3 t& 0& t^2& 2 s t^2& 3s^2 t^2\\
    0& 0& 0& 0 & 0 & 0 & 1& s& s^2& s^3& s^4& 2t& 2s t& 2s^2 t& 2s^3 t\\
    1& u& u^2& u^3& u^4& u^5& v& u v& u^2 v& u^3 v& u^4 v& v^2& u v^2& u^2 v^2& u^3 v^2 \\
    0& 1& 2 u& 3 u^2& 4 u^3& 5 u^4& 0& v& 2u v& 3 u^2 v& 4u^3 v& 0& v^2& 2 u v^2& 3u^2 v^2\\
    0& 0& 0& 0 & 0 & 0 & 1& u& u^2& u^3& u^4& 2v& 2u v& 2u^2 v& 2u^3 v\\
    1& a& a^2& a^3& a^4& a^5& b& a b& a^2 b& a^3 b& a^4 b& b^2& a b^2& a^2 b^2& a^3 b^2 \\
    0& 1& 2 a& 3 a^2& 4 a^3& 5 a^4& 0& b& 2a b& 3 a^2 b& 4a^3 b& 0& b^2& 2 a b^2& 3a^2 b^2\\
    0& 0& 0& 0 & 0 & 0 & 1& a& a^2& a^3& a^4& 2b& 2a b& 2a^2 b& 2a^3 b\\
    1& c& c^2& c^3& c^4& c^5& d& c d& c^2 d& c^3 d& c^4 d& d^2& c d^2& c^2 d^2& c^3 d^2 \\
    0& 1& 2 c& 3 c^2& 4 c^3& 5 c^4& 0& d& 2c d& 3 c^2 d& 4c^3 d& 0& d^2& 2 c d^2& 3c^2 d^2\\
    0& 0& 0& 0 & 0 & 0 & 1& c& c^2& c^3& c^4& 2d& 2c d& 2c^2 d& 2c^3 d\\
  \end{pmatrix}
  \end{equation*}
  If we set $(x, y, s, t, u, v, a, b, c, d) = (1,2,3,4,5,6,7,9,0,2)$ and calculate the determinant in the field $\mathbb{Z}/101$, we get $34$, in
  particular, non-zero. This means that the determinant calculated in $\mathbb{C}$ is also non-zero at this point, so it is non-zero on a dense open
  subset. Hence by Terracini's lemma (Proposition \ref{proposition:terracini}) the dimension of the affine cone of $\sigma_5(X_\Sigma)$ is fifteen. It
  follows that $\sigma_5(X_\Sigma) = \mathbb{P}^{14}$. Thus the border rank of $F$ is five.
\end{example}

\subsection{Weighted projective plane $\mathbb{P}(1,1,4)$}\label{subsection:weighted_projective_plane}
Consider a set of rays $\{\rho_x = (-1,-4), \rho_y = (1,0), \rho_z = (0,1)\}$. Let $\Sigma$ be the complete fan determined by these rays. This is a
fan of $\mathbb{P}(1,1,4)$, the weighted projective space with weights $1,1,4$, see \cite[Section 2.0, Subsection Weighted Projective Space; and
Example 3.1.17]{cox_book}.
  \[\begin{tikzpicture}[scale = 0.7]
    \draw[-latex, thin] (0,0) -- (-1,-4);
    \draw[-latex, thin] (0,0) -- (1,0);
    \draw[-latex, thin] (0,0) -- (0,1);
    \foreach \x in {-2,...,2}
    {
      \foreach \y in {-4,...,2} 
      {
	\draw[fill] (\x,\y)circle [radius=0.025];
      }
    }
  \node[below right] at (1,0) {$\rho_{y}$};
  \node[above] at (0,1) {$\rho_{z}$};
  \node[below] at (-1,-4) {$\rho_{x}$};
  \end{tikzpicture}\]
  The class group is $\mathbb{Z}$, generated by $D_{\rho_x} \sim D_{\rho_y}$, and we know that $D_{\rho_z} \sim 4 D_{\rho_x}$.  The Cox ring is
  $\mathbb{C}[\alpha, \beta, \gamma]$, where $\alpha, \beta, \gamma$ correspond to $\rho_x, \rho_y, \rho_z$, and the degrees are given by the vector
  $(1,1,4)$. Let $x, y, z$ denote the dual coordinates. The Picard group is generated by $\mathcal{O}(4)$. The only singular point is $[0,0,1]$.

  Consider the embedding given by $\mathcal{O}_{X_\Sigma}(4)$, which is a line bundle. It maps $X$ into $\mathbb{P}^5$ (since there are six monomials
  of degree $4$: $x^4, x^3 y, x^2 y^2, xy^3, y^4, z$). We calculate various ranks of $F = x^2 y^2$. The results are shown it the following table

    \begin{center}
      \begin{tabular}{|c|c|c|}
	\hline
	 $\rr(F)$ & $\crr(F)$ & $\brr(F)$ \\
	\hline
	$3$ & $2$ & $3$  \\
	\hline
      \end{tabular}
    \end{center}

  The Hilbert function of $A_F$ is $(1,2,3,2,1)$ (here the first element of the sequence corresponds to $\mathcal{O}_{X_\Sigma}$, the next to
  $\mathcal{O}_{X_\Sigma}(1)$, and so on). This means (by Corollary \ref{corollary:border_catalecticant_bound}) that $\brr(F) \geq 3$.

  We know that $F^\perp = (\alpha^3, \beta^3, \gamma)$, since the annihilator remains the same if we change the grading. Let $I = (\alpha^3, \beta^3)
  \subseteq F^\perp$. We show that the length of the scheme $R \coloneqq V(I)$ is two. This will mean that $\crr(F) \leq 2$. Since $R$ is supported at
  the point $[0,0,1]$, we can look at it on the affine open $U_\sigma$, where $\sigma = \Cone(\rho_x, \rho_y)$. After localizing $S =
  \mathbb{C}[\alpha,\beta,\gamma]$ at $\gamma$ and taking degree $0$, we get the ring
  \begin{equation*}
    \mathbb{C}\left[\frac{\alpha^4}{\gamma}, \frac{\alpha^3\beta}{\gamma}, \frac{\alpha^2\beta^2}{\gamma}, \frac{\alpha\beta^3}{\gamma},
    \frac{\beta^4}{\gamma}\right]\text{.}
  \end{equation*}
  Ideal $I$ becomes the ideal generated by $\frac{\alpha^4}{\gamma}, \frac{\alpha^3\beta}{\gamma}, \frac{\alpha\beta^3}{\gamma},
  \frac{\beta^4}{\gamma}$ in this ring, so the quotient is a two-dimensional vector space with basis $1, \frac{\alpha^2 \beta^2}{\gamma}$. Hence the
  length of $R$ is two.

  But the cactus rank cannot be $1$, since $x^2 y^2$ is not in the image of $\varphi_{|\mathcal{O}(4)|}$ (see Proposition \ref{proposition:formula}).
  It follows that $\crr(F) = 2$.

  Now consider the ideal $I = (\alpha^3 - \beta^3, \gamma) \subseteq F^\perp$. We show that the length of the scheme defined by $I$ is three. Since
  $I$ is radical, the scheme given by $I$ is reduced, hence this will show that $\rr(F) \leq 3$, as desired. But $I = (\alpha - \beta, \gamma) \cap
  (\alpha - \varepsilon \beta, \gamma)\cap (\alpha -\varepsilon^2 \beta,\gamma)$, where $\varepsilon = \frac{-1 + \sqrt{3}i}{2}$, so the scheme given by $I$
  is the reduced union of $[1,1,0], [\varepsilon,1,0], [\varepsilon^2,1,0]$.

  \begin{remark}\label{remark:catalecticant_counterexample}
    Since in this example 
    \begin{equation*}
      \rank C_F^{\mathcal{O}(2)} = \dim (A_F)_2 = \dim (S/F^\perp)_2 = 3\text{,}
    \end{equation*}
    and $\crr(F) = 2$, we see that the bound stated in point (1) of Theorem \ref{theorem:catalecticant} does not hold for the cactus rank (and
    reflexive sheaves of rank one that are not line bundles).
  \end{remark}
  \begin{remark}
    One can also calculate that the projective tangent space in this embedding at the singular point $[0,0,1]$ is the whole $\mathbb{P}^5$ (this is a
    straightforward application of Proposition \ref{proposition:embedded_tangent_space}). It follows that the cactus rank of every point in
    $\mathbb{P}^5$ is at most two, since any point of the tangent space at $[0,0,1]$ can be reached by a linear span of a scheme of length two
    supported at $[0,0,1]$.
  \end{remark}
  \subsection{Fake weighted projective plane}\label{subsection:fake_weighted_projective_plane}
Consider the set of rays $\{\rho_0 = (-1, -1), \rho_1 = (2, -1), \rho_2 = (-1, 2) \}$. Let $\Sigma$ be the complete fan determined by these rays. Then
$X_\Sigma$ is an example of a fake weighted projective space, see \cite[Example 6.2]{wero_fps}. 

\[\begin{tikzpicture}[scale = 0.7]
  \draw[-latex, thin] (0,0) -- (-1,-1);
  \draw[-latex, thin] (0,0) -- (2,-1);
  \draw[-latex, thin] (0,0) -- (-1,2);
  \foreach \x in {-2,...,2}
  {
    \foreach \y in {-2,...,2} 
    {
      \draw[fill] (\x,\y)circle [radius=0.025];
    }
  }
  \node[below left] at (-1,-1) {$\rho_{0}$};
  \node[right] at (2,-1) {$\rho_{1}$};
  \node[above] at (-1,2) {$\rho_{2}$};
\end{tikzpicture}\]
Let $\alpha_0, \alpha_1, \alpha_2$ be the corresponding coordinates in $S$. The class group is generated by $D_{\rho_0}, D_{\rho_1}, D_{\rho_2}$
with relations $D_{\rho_0} \sim 2 D_{\rho_1} - D_{\rho_2} \sim 2 D_{\rho_2} - D_{\rho_1}$. This is the same as a group with two generators
$D_{\rho_0}$ and $D_{\rho_2} - D_{\rho_1}$ with the relation $3(D_{\rho_2} - D_{\rho_1}) \sim 0$. This choice gives an isomorphism with $\mathbb{Z} \times
\mathbb{Z}/3$ sending $D_{\rho_0}$ to $(1, 0)$ and $D_{\rho_2} - D_{\rho_1}$ to $(0, 1)$.  The Picard group is the subgroup generated by $3
D_{\rho_0}$. It is free.

As a result, $S = \mathbb{C}[\alpha_0, \alpha_1, \alpha_2]$ is graded by $\Cl X_\Sigma = \mathbb{Z}\times \mathbb{Z}/3$, where
\begin{align*}
  \deg \alpha_0 &= (1,0)\text{,} \\
  \deg \alpha_1 &= (1,1)\text{,} \\ 
  \deg \alpha_2 &= (1,-1) = (1, 2)\text{,}
\end{align*}
and $\Pic X_\Sigma$ is generated by $(3,0)$. The singular points of $X_\Sigma$ are $[1,0,0]$, $[0,1,0]$, $[0,0,1]$.

Consider the line bundle $\mathcal{O}(6,0)$. It is ample, because by \cite[Proposition 6.3.25]{cox_book} every complete toric surface is projective,
and the line bundles $\mathcal{O}(-3m, 0)$ for $m < 0$ have no non-zero sections. By \cite[Proposition 6.1.10, (b)]{cox_book} it is very ample. It
gives an embedding $\varphi :X_\Sigma \hookrightarrow \mathbb{P}^9$. We denote the dual coordinates by $x_0, x_1, x_2$.

\begin{example}
  Let $F = x_0^4 x_1 x_2 \in H^0(X_\Sigma, \mathcal{O}(6,0))^*$. The apolar ideal is $(\alpha_0^5, \alpha_1^2, \alpha_2^2)$. We claim that the cactus
  rank is two, the rank is at most five, and the border rank is two.
  \begin{center}
    \begin{tabular}{|c|c|c|}
      \hline
       $\rr(F)$ & $\crr(F)$ & $\brr(F)$ \\
      \hline
      $\leq 5$ & $ 2$ & $2$  \\
      \hline
    \end{tabular}
  \end{center}
  Note that $F$ is not in the image of $\varphi_{|\mathcal{O}(6,0)|}$, so the cactus rank and the border rank are at least two.

  We show that the cactus rank is two. Consider the ideal $I = (\alpha_1^2, \alpha_2^2) \subseteq F^\perp$. It is saturated, since $B$ in this case is
  $(\alpha_0, \alpha_1, \alpha_2)$, so it is the same as in the case of $\mathbb{P}^2$. We show that the length of the subscheme given by $I$ is two.
  Since the support of the scheme is the point $[1, 0, 0]$, we check it on the set $U_\sigma$, where $\sigma = \Cone(\rho_1, \rho_2)$. We localize
  with respect to $\alpha_0$, take degree zero, and get the ring 
  \begin{equation}\label{equation:length_calculation}
    \mathbb{C}\left[\frac{\alpha_1^3}{\alpha_0^3}, \frac{\alpha_2^3}{\alpha_0^3}, \frac{\alpha_1 \alpha_2}{\alpha_0^2}\right] 
    \cong \mathbb{C}[u, v, w]/(w^3 - uv)\text{.}
  \end{equation}

  If we factor out by the ideal generated by $\alpha_1^2$ and $\alpha_2^2$, we get
  \begin{equation*}
    \mathbb{C}[u,v, w]/(w^3 - uv, u, v, w^2) \cong \mathbb{C}[w]/(w^2)\text{,}
  \end{equation*}
  so the length of the scheme defined by $I$ is two.
  
  Now we show that the rank is at most five. Take a homogeneous ideal $I = (\alpha_0^5 - \alpha_1^4 \alpha_2, \alpha_1^3 - \alpha_2^3)
  \subseteq F^\perp$. We show that the length of the subscheme defined by $I$ is five. From these equations we know that no coordinate can be
  zero, so we can check the length on the open subset $U_\sigma$, where $\sigma = \Cone(\rho_1, \rho_2)$. We get the same ring as in Equation
  \ref{equation:length_calculation}, and we want to factor it out by the ideal generated by $\alpha_0^5 - \alpha_1^4 \alpha_2$ and $\alpha_1^3 -
  \alpha_2^3$. The second generator gives the relation $u - v$, and the first one the relation $1 - v w$. So we get the ring
  \begin{equation*}
    \mathbb{C}[v, w]/(w^3 - v^2, 1 - v w)\text{.}
  \end{equation*}
  But notice that $1 = v w$ implies that $w$ is non-zero. Hence
  \begin{multline*}
    \mathbb{C}[v, w]/(w^3 - v^2, 1 - v w) \cong \mathbb{C}[v, w, w^{-1}]/(w^3 - v^2, 1 - v w) \\ 
    \cong \mathbb{C}[v, w, w^{-1}]/(w^5 - 1, w^{-1} - v) \cong \mathbb{C}[w, w^{-1}]/(w^5 - 1)\text{.}
  \end{multline*}
  We get a reduced scheme of length five, so the rank is at most five.

  Now we show that $\brr(F) = 2$. Consider the equations given by rank one reflexive sheaves $\mathcal{O}(3,0)$
  and $\mathcal{O}(3,1)$ (given by minors of matrices as in the proof of Proposition \ref{proposition:closed_set}). In order to find these equations,
  we give coordinates to every point $p \in H^0(X_\Sigma, \mathcal{O}(6,0))^*$:
  \begin{align*}
    p = t_{6,0,0}x_0^6 + t_{0,6,0}x_1^6 + t_{0,0,6}x_2^6 + t_{4,1,1} x_0^4 x_1 x_2 
    + t_{1,4,1}x_0 x_1^4 x_2 + t_{1,1,4}x_0 x_1 x_2^4 \\
    + t_{3,3,0}x_0^3x_1^3 + t_{0,3,3}  x_1^3 x_2^3 + t_{3,0,3}x_0^3 x_2^3 + t_{2,2,2} x_0^2 x_1^2 x_2^2 \text{.}
  \end{align*}
  Now we write down the matrix of the map $(\cdot \apolar p) : S_{(3,0)} \to T_{(3,0)}$ in the standard monomial bases $\alpha_0^3, \alpha_1^3,
  \alpha_2^3, \alpha_0\alpha_1\alpha_2$ and $x_0^3, x_1^3, x_2^3, x_0 x_1 x_2$:
  \begin{equation*}
    M = \left(
  \begin{matrix}
    t_{6,0,0} & t_{3,3,0} & t_{3,0,3} & t_{4,1,1} \\
    t_{3,3,0} & t_{0,6,0} & t_{0,3,3} & t_{1,4,1} \\
    t_{3,0,3} & t_{0,3,3} & t_{0,0,6} & t_{1,1,4} \\
    t_{4,1,1} & t_{1,4,1} & t_{1,1,4} & t_{2,2,2}
  \end{matrix}
  \right)
  \end{equation*}
  We also write down the matrix of the map $(\cdot \apolar p) : S_{(3,1)}\to T_{(3,-1)}$ in the bases $\alpha_0^2 \alpha_1, \alpha_1^2 \alpha_2,
  \alpha_2^2 \alpha_0$ and $x_0^2 x_2, x_1^2 x_0, x_2^2 x_1$:
  \begin{equation*}
    N = \left(
  \begin{matrix}
    t_{4,1,1} & t_{2,2,2}& t_{3,0,3} \\
    t_{3,3,0} & t_{1,4,1} & t_{2,2,2}\\
    t_{2,2,2} & t_{0,3,3} & t_{1,1,4}
  \end{matrix}
  \right)
  \end{equation*}
  We compute that the $3$ by $3$ minors of $M$ and $N$ define an irreducible variety of dimension $5$ over $\mathbb{Q}$. But it can be found by the
  same method as in Subsection \ref{subsection:hirzebruch_surface} that the dimension of the second secant variety of the embedding $X_\Sigma
  \hookrightarrow \mathbb{P}(H^0(X_\Sigma, \mathcal{O}(6,0))^*)$ is $5$. Hence, the $\sigma_2(X_\Sigma)$ is given set-theoretically by the $3$ by $3$
  minors of $M$ and $N$ over $\mathbb{Q}$. But this means that it is also defined by these equations over $\mathbb{C}$. Finally, since $F$ satisfies
  these equations, the claim follows.
\end{example}

\begin{example}
  Now take $F = x_0^2 x_1^2 x_2^2 \in H^0(X_\Sigma, \mathcal{O}(6,0))^*$. Here the apolar ideal is $F^\perp = (\alpha_0^3, \alpha_1^3, \alpha_2^3)$.
  We calculate the following
    \begin{center}
      \begin{tabular}{|c|c|c|}
	\hline
	 $\rr(F)$ & $\crr(F)$ & $\brr(F)$ \\
	\hline
	$3$ & $ 3 $ & $3$  \\
	\hline
      \end{tabular}
    \end{center}
  Let $I = (\alpha_0^3 - \alpha_1^3, \alpha_1^3 - \alpha_2^3)$. In this case also no coordinate can be zero, so we may calculate the length on
  $U_\sigma$ (where $\sigma$ is as before). We get the ring as in Equation \ref{equation:length_calculation} and the two generators become $1 - u$ and
  $u - v$. So here the quotient ring is
  \begin{equation*}
    \mathbb{C}[w]/(w^3 - 1)\text{.}
  \end{equation*}
  This means that the rank is at most three (notice that we get a reduced scheme).
    We can calculate the Hilbert function of $A_F = S/F^\perp$ (where $F = x_0^2 x_1^2 x_2^2$). We have $\dim_{\mathbb{C}} (A_F)_{(3,1)} = 3$, so from
    Corollary \ref{corollary:border_catalecticant_bound} we get that $\brr(F) \geq 3$. 

    Now we show that $\crr(F) = 3$. We look at the polytope $P$ of the embedding by $\mathcal{O}(6,0)$.
    \[\begin{tikzpicture}
      \foreach \x in {-2,...,2}
      {
        \foreach \y in {-2,...,2} 
        {
          \draw[fill] (\x,\y)circle [radius=0.025];
        }
      }
      \fill[opacity=0.5, color=gray] (0,-2)--(2,2)--(-2,0)--cycle;
      \draw[-latex,thick] (0,-2) -- (1,0);
      \draw[-latex,thick] (0,-2) -- (0,-1);
      \draw[-latex,thick] (0,-2) -- (-1,-1);
      \draw[-latex,thick] (-2,0) -- (0,1);
      \draw[-latex,thick] (-2,0) -- (-1,-1);
      \draw[-latex,thick] (-2,0) -- (-1,0);
      \draw[-latex,thick] (2,2) -- (1,1);
      \draw[-latex,thick] (2,2) -- (1,0);
      \draw[-latex,thick] (2,2) -- (0,1);
      \node[right] at (2,2) {$(6,0,0)$};
      \node[below] at (0,-2) {$(0,0,6)$};
      \node[left] at (-2,0) {$(0,6,0)$};
      \node[below] at (0,0) {$(2,2,2)$};
    \end{tikzpicture}\]
    The projective tangent space at the vertex $v$ is given by the Hilbert basis of the semigroup $\mathbb{N}(P\cap M - v)$ (see Proposition
    \ref{proposition:embedded_tangent_space}). The vector $(2,2,2)$ is in none of the three Hilbert bases, which means that $x_0^2 x_1^2 x_2^2$ is in
    none the of three tangent spaces at the singular points. But the fact that $\brr(F) \geq 3$ means that $F$ is neither in any projective tangent
    space at a smooth point nor at any secant line passing through two points. It follows that $\crr(F) > 2$.
\end{example}

\bibliography{../../references}
\bibliographystyle{plain}

\end{document}